\newtheorem{thm}{Theorem}[section]
\newtheorem{lemma}[thm]{Lemma}
\newtheorem{prop}[thm]{Proposition}
\theoremstyle{remark}
\newenvironment{rem}
  {\pushQED{\qed}\remex}
  {\popQED\endremex}
\theoremstyle{definition}
\newtheorem{defi}[thm]{Definition}
\newcommand{\N}{\mathbb{N}}
\newcommand{\R}{\mathbb{R}}
\newcommand{\Z}{\mathbb{Z}}
\newcommand{\essinf}{\mathrm{ess\,inf}}
\newcommand{\esssup}{\mathrm{ess\,sup}}
\newcommand{\dist}{\mathrm{\, dist}}
\newcommand{\dis}{\displaystyle}
\newcommand{\eps}{\varepsilon}
\def \sign{ \mathop{\rm sgn} \nolimits} 
\newmdtheoremenv[linecolor=black, backgroundcolor= gray!20, ]{thmC}{Theorem}[section]
\newmdtheoremenv[linecolor=black, backgroundcolor= gray!20, ]{lemmaC}[thmC]{Lemma}
\newmdtheoremenv[linecolor=black, backgroundcolor= gray!20, ]{propC}[thmC]{Proposition}
\newmdtheoremenv[linecolor=black, backgroundcolor= gray!20, ]{defiC}[thmC]{Definition}
\newmdtheoremenv[linecolor=black, backgroundcolor= gray!20, ]{corC}[thmC]{Corollary}
\newmdtheoremenv[linecolor=black, backgroundcolor= gray!20, ]{remC}[thmC]{Remark}
\newcommand*{\inlineequation}[2][]{%
  \begingroup
    \refstepcounter{equation}%
    \ifx\\#1\\%
    \else
      \label{#1}%
    \fi
    \relpenalty=10000 %
    \binoppenalty=10000 %
    \ensuremath{%
      #2%
    }%
    ~\@eqnnum
  \endgroup
}
\title{Convergence of finite volumes schemes for the coupling between the inviscid Burgers equation and a particle}
\author{Nina Aguillon\thanks{Universit\'e Paris Sud}, Frédéric Lagoutière\thanks{Université Paris Sud}, Nicolas Seguin\thanks{Université Pierre et Marie Curie}}
\begin{document}
\maketitle
\tableofcontents

\abstract{In this paper, we prove the convergence of a class of finite volume schemes for the model of coupling between a Burgers fluid and a pointwise particle introduced in~\cite{LST08}. In this model, the particle is seen as a moving interface through which an interface condition is imposed, which links the velocity of the fluid on the left and on the right of the particle and the velocity of the particle (the three quantities are all not equal in general). The total impulsion of the system is conserved through time.

The proposed schemes are consistent with a ``large enough''  part of the interface conditions. The proof of convergence is an extension of the one of~\cite{AS12} to the case where the particle moves under the influence of the fluid. It yields two main difficulties: first, we have to deal with time-dependent flux and interface condition, and second with the coupling between and ODE and a PDE. }
 
\noindent  \textbf{Key phrases:} Fluid-particle interaction; Burgers equation; Non-conservative coupling; moving interface; convergence of finite volume schemes; PDE-ODE coupling

\bigskip
\noindent  \textbf{2010 Mathematics Subject Classification:} 35R37, 65M12, 35L72.
  
\section{Introduction}

We study the numerical convergence of finite volume schemes for the Cauchy problem
\begin{equation} \label{eq:CauchyPb} 
\begin{cases}
 \partial_{t} u + \partial_{x} \frac{u^{2}}{2} = - \lambda (u-h'(t)) \delta_{h(t)}(x), \\
 m_{p} h''(t)= \lambda (u(t,h(t))-h'(t)), \\
 u_{| t=0}= u^{0}, h(0)= h^{0}, h'(0)=v^{0}.
\end{cases}
\end{equation}
It models the behavior of a pointwise particle of position $h$, velocity $h'$ and acceleration $h''$ with mass $m_{p}$, immersed into a ``fluid,'' whose velocity at time $t$ and point $x$ is $u(t,x)$. The velocity of the fluid is assumed to follow the inviscid Burgers equation. This system is fully coupled: the fluid exerts a drag force $D=  \lambda (u(t,h(t))-h'(t))$ on the particle, where $\lambda$ is a positive friction parameter. By the action--reaction principle, the particle exerts the force $-D$ on the particle. The interaction is local: it applies only at the point where the particle is. This friction force tends to bring the velocities of the fluid and the particle closer to each other: as $\lambda$ is positive, the particle accelerates if $u(t,h(t))$ is larger than $h'(t)$ and vice-versa. This toy model was introduced in~\cite{LST08} (see also~\cite{BCG13} and~\cite{A12} for related problems).  In contrast with the model studied in~\cite{VZ03},~\cite{H05} and~\cite{VZ06}, the particle and the fluid do not share the same velocity and the fluid is inviscid.
 In particular the fluid velocity is typically discontinuous through the particle. It yields to issues to define correctly the product $(u-h') \delta_{h}$ and the ODE for the particle in system~\eqref{eq:CauchyPb}. To do so, the idea is to regularize the Dirac measure in~\eqref{eq:CauchyPb}, and to remark that the values of the fluid velocity on both sides of this thickened particle are independent of the regularization. It allows to reformulate System~\eqref{eq:CauchyPb} as an interface problem, where the traces around the particle $u_{-}(t)= \lim_{x \rightarrow h(t)^{-}} u(t, x)$ and $u_{+}(t)= \lim_{x \rightarrow h(t)^{+}} u(t, x)$ must belong to a set $\mathcal{G}_{\lambda}(h'(t))$, which takes into account the interface conditions. This study was done in details in~\cite{LST08}. The germ is defined as follow.
 \begin{defi} \label{def:germ1}
For any given speed $v\in\R$, the germ at speed $v$, $\mathcal{G}_{\lambda}(v)$, is the set of all $(u_{-}, u_{+})$ in $\R^{2}$ such that
 $$(u_{-}, u_{+}) \in \mathcal{G}_{\lambda}^{1} \cup  \mathcal{G}_{\lambda}^{2}(v) \cup  \mathcal{G}_{\lambda}^{3}(v),$$
 where
 $$ \mathcal{G}_{\lambda}^{1} =\left\{ (u_{-}, u_{+}) \in \R^{2} : u_{-}= u_{+}+ \lambda \right\}, $$
 $$ \mathcal{G}_{\lambda}^{2}(v) =\left\{ (u_{-}, u_{+}) \in \R^{2} : v \leq u_{-} \leq v+\lambda, \,  v- \lambda \leq u_{+} \leq v \ \text{ and } \ u_{-}-u_{+}<\lambda \right\}, $$
 and
 $$ \mathcal{G}_{\lambda}^{3}(v) =\left\{ (u_{-}, u_{+}) \in \R^{2} :  -\lambda \leq u_{+} + u_{-} -2v \leq \lambda \ \text{ and } \ u_{-}-u_{+}>\lambda \right\}. $$
\end{defi}
The germ $\mathcal{G}_{\lambda}(0)$ and its partition are depicted on Figure~\ref{F:zones2} on the left (note that the germ $\mathcal{G}_\lambda(v)$ is the translation of $\mathcal{G}_\lambda(0)$ by the vector $(v,v)$). Here, we choose a slightly different partition of the germ than in~\cite{AS12} and~\cite{ALST13}, which is depicted on the right of Figure~\eqref{F:zones2}. The reason is that we are able to find a class of finite volume schemes which are consistent with $\mathcal{G}_{\lambda}^{1} \cup \mathcal{G}_{\lambda}^{2}(0)$ with this choice, but not with the original partition. However, the essential property that $\mathcal{G}_{\lambda}^{1} \cup \mathcal{G}_{\lambda}^{2}(0)$ is a \emph{maximal} part of the germ still holds true with the partition of Definition~\ref{def:germ1} (more details are given in Definition~\ref{def:MaxSub} and Proposition~\ref{prop:MaxSub}).
\begin{psfrags}
 \psfrag{u_{-}}{$u_{-}$}
 \psfrag{u_{+}}{$u_{+}$}
 \psfrag{v}{$v$}
 \psfrag{-l}{$-\lambda$}
 \psfrag{l}{$\lambda$}
 \psfrag{G1}{$\mathcal{G}_{\lambda}^{1}$}
 \psfrag{G2}{$\mathcal{G}_{\lambda}^{2}(0)$}
 \psfrag{G3}{$\mathcal{G}_{\lambda}^{3}(0)$}
\begin{figure}[htp]
\centering
 \includegraphics[width=\linewidth]{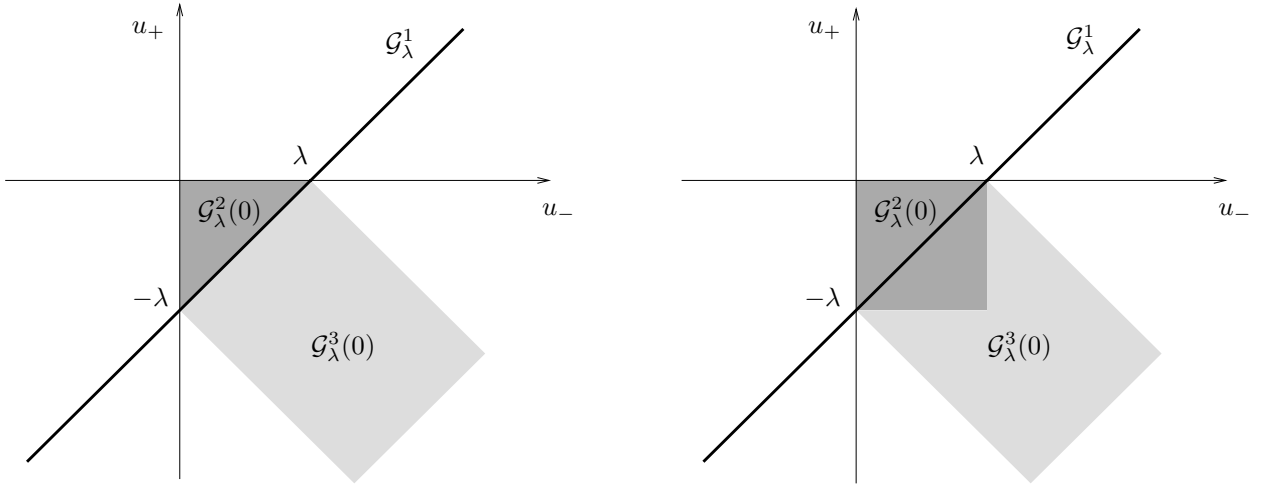}
 \caption[Two different partitions of the germ.]{The germ for a motionless particle and its partitions. Left: the partition used in this work. Right: the partition used in~\cite{AS12} and~\cite{ALST13}.} \label{F:zones2}
\end{figure}
\end{psfrags}
Once the germ has been defined, System~\eqref{eq:CauchyPb} is defined as an interface problem. The equation on the particle is reformulated to keep the conservation of total momentum
$$ m_{p} h'(t) + \int_{\R} u(t,x) dx $$
which holds formally in~\eqref{eq:CauchyPb}. In~\cite{LST08}, an entropy inequality that takes into account the particle is also derived.
\begin{defi} \label{def:Traces}
 A pair $(u,h)$ of functions in $L^{\infty}(\R_{+} \times \R) \times W^{2, \infty}(\R_{+})$ is a solution of~\eqref{eq:CauchyPb} with initial data $u^{0}$ in $L^{\infty}(\R)$ and ($h^{0}, v^{0}) \in \R^{2}$ if:
 \begin{itemize}
 \item the function $u$ is an entropy weak solution of the Burgers equations on the sets $\{(t,x), \, x<h(t)\}$ and $\{(t,x), \, x>h(t)\}$, 
 \item for almost every positive time $t$,
 \begin{equation} \label{eq:ODE}
 m_{p} h''(t)= (u_{-}(t)- u_{+}(t)) \left( \frac{u_{-}(t)+u_{+}(t)}{2} -h'(t) \right)
\end{equation}
 and
 $$ (u_{-}(t), u_{+}(t)) \in \mathcal{G}_{\lambda}(h'(t)). $$
\end{itemize}
\end{defi}
This definition requires the existence of traces along the particle's trajectory $h$. It follows from the works of Panov~\cite{P07} and Vasseur~\cite{V01}.
When the particle is motionless, well-posedness in the $BV$ setting was proved in~\cite{AS12}, while for the fully coupled system~\eqref{eq:CauchyPb}, it is proved in~\cite{ALST10} and~\cite{ALST13}.

Remark that Definition~\ref{def:Traces} is not suitable to prove convergence of finite volume schemes in a general framework. Indeed, a scheme can create a numerical boundary layer near the particle, of several cells width. It does not prevent the scheme from converging in, say, $L^{\infty}_{loc}$ in time and $L^{1}$ in space; but in that case we cannot expect the numerical traces to converge to their correct values. Nevertheless we will prove the convergence of some schemes that create such boundary layers. The key point is to use, instead of Definition~\ref{def:germ1}, an equivalent definition which does not contain the traces of $u$. We begin with some properties useful to decide if a pair $(c_{-}, c_{+})$ belongs to the germ $\mathcal{G}_{\lambda}(v)$. We adopted the vocabulary of the theory of conservation law with discontinuous flux function of~\cite{AKR10} and~\cite{AKR11}.

 In the sequel, we denote by  $\Phi_{v}$ the so-called Kruzhkov entropy flux associated with $f_{v}(u)= \frac{u^{2}}{2}-v u$:
$$ \Phi_{v}: \, 
\begin{array}{ccc}
 \R^{2} & \longrightarrow & \R \\
 (a,b) & \longmapsto & \sign(a-b) \left( \left( \frac{a^{2}}{2}-va \right)-\left( \frac{b^{2}}{2}-vb\right)\right)
\end{array}
$$
and we define
$$ \Xi_{v}: \, 
\begin{array}{ccc}
 \R^{2} \times \R^{2} & \longrightarrow & \R \\
( (a_{-},a_{+}), (b_{-}, b_{+}) ) & \longmapsto & \Phi_{v}(a_{-},b_{-}) -  \Phi_{v}(a_{+},b_{+})
\end{array}
$$
\begin{prop} \label{def:GermPos}
 If both $(a_{-}, a_{+})$ and $(b_{-}, b_{+})$ belong to $\mathcal{G}_{\lambda}(v)$, then
 $$\Xi_{v}( (a_{-},a_{+}), (b_{-}, b_{+}) ) \geq 0.  $$
 Conversely, if $(a_{-}, a_{+})$ is such that
  $$\forall(b_{-}, b_{+}) \in \mathcal{G}_{\lambda}(v), \quad  \Xi_{v}( (a_{-},a_{+}), (b_{-}, b_{+}) ) \geq 0,  $$
  then $(a_{-}, a_{+})$ belongs to the germ.
\end{prop}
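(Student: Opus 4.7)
The plan is to reduce first to the motionless case $v=0$ by translation, then to handle the two implications separately: the direct one by a case-by-case check on the partition $\mathcal{G}_\lambda^1 \cup \mathcal{G}_\lambda^2(0) \cup \mathcal{G}_\lambda^3(0)$, and the converse one by contrapositive, producing for each $(a_-, a_+) \notin \mathcal{G}_\lambda(0)$ an explicit witness $(b_-, b_+) \in \mathcal{G}_\lambda(0)$ such that $\Xi_0((a_-, a_+), (b_-, b_+)) < 0$. The reduction rests on the identity $f_v(u) = f_0(u-v) - v^2/2$: the constant cancels in differences, so $\Phi_v(a,b) = \Phi_0(a-v, b-v)$, and combined with the evident $\mathcal{G}_\lambda(v) = \mathcal{G}_\lambda(0) + (v,v)$, the substitution $\tilde a_\pm = a_\pm - v$, $\tilde b_\pm = b_\pm - v$ puts the statement in the form of the motionless case.

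For the direct implication at $v = 0$, I will enumerate the pairs of pieces $(\mathcal{G}_\lambda^i, \mathcal{G}_\lambda^j)$ to which $(a_-, a_+)$ and $(b_-, b_+)$ belong. Since $\Phi_0$ is symmetric, so is $\Xi_0$ under the swap of its two vector arguments, which reduces the enumeration to six cases. In each one, the explicit descriptions in Definition~\ref{def:germ1} supply enough sign information to compute $\Phi_0(a_-, b_-) - \Phi_0(a_+, b_+)$ and verify non-negativity: on $\mathcal{G}_\lambda^1$ the Rankine--Hugoniot identity $f_0(u_-) - f_0(u_+) = \lambda (u_- + u_+)/2$ when $u_- - u_+ = \lambda$ is the main algebraic tool, on $\mathcal{G}_\lambda^3(0)$ the bound $|u_- + u_+| \leq \lambda$ controls the Burgers flux through $u^2/2$, and on $\mathcal{G}_\lambda^2(0)$ the knowledge of $\sign(u_-)$ and $\sign(u_+)$ is enough. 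When the signs of $a_- - b_-$ or $a_+ - b_+$ are not forced by the classes, a further elementary subdivision handles them.

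For the converse, I take $(a_-, a_+) \notin \mathcal{G}_\lambda(0)$ and decompose the complement of the germ into a finite list of regions delimited by the boundary lines $u_- = u_+ + \lambda$, $u_- + u_+ = \pm\lambda$, $u_- = 0$ and $u_+ = 0$. To each region I assign a fixed witness $(b_-, b_+)$ drawn from the germ (typically $(0,0)$, a point of the form $(\alpha, \alpha - \lambda) \in \mathcal{G}_\lambda^1$, or one of the corners of $\mathcal{G}_\lambda^2(0)$) and check the strict inequality $\Xi_0 < 0$ by the same algebraic tools as above. The main obstacle is the bookkeeping in the direct implication: the cross cases $(\mathcal{G}_\lambda^i, \mathcal{G}_\lambda^j)$ with $i \neq j$ may require further splitting according to the unknown sign of $a_\pm - b_\pm$, and one must be careful that each verification treats the boundary case $u_- - u_+ = \lambda$, where $\mathcal{G}_\lambda^2(0)$ and $\mathcal{G}_\lambda^3(0)$ meet $\mathcal{G}_\lambda^1$, in a way consistent with the (half-)open nature of the inequalities in Definition~\ref{def:germ1}.
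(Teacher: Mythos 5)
The paper itself offers no proof of this proposition: it is stated as a known fact about the germ, imported from the analysis of~\cite{LST08} and~\cite{AS12} and the ``germ'' framework of~\cite{AKR10,AKR11}, so there is no in-paper argument to compare yours against line by line. The closest in-house analogue is the proof of Proposition~\ref{prop:MaxSub}, and your strategy is exactly of that type: translate to $v=0$ using $\Phi_v(a,b)=\Phi_0(a-v,b-v)$ and $\mathcal{G}_\lambda(v)=\mathcal{G}_\lambda(0)+(v,v)$, then run an exhaustive disjunction of cases with explicit test states. Those reductions are correct, $\Xi_0$ is indeed symmetric in its two vector arguments, and the tools you name are the right ones: writing $\Phi_0(a,b)=|a-b|(a+b)/2$, the sign information $u_-\geq 0\geq u_+$ (valid on $\mathcal{G}_\lambda^2(0)\cup\mathcal{G}_\lambda^3(0)$ after translation) settles the diagonal cases immediately, and the identity $f_0(u_-)-f_0(u_+)=\lambda(u_-+u_+)/2$ on $\mathcal{G}_\lambda^1$ handles the cross cases.

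The reservation is that, as written, this is a roadmap rather than a proof: essentially all of the mathematical content --- the case-by-case verifications in the direct implication (some of which, e.g.\ $(a_-,a_+)\in\mathcal{G}_\lambda^1$ against $(b_-,b_+)\in\mathcal{G}_\lambda^3(0)$ with $b_-$ large, require more than a one-line sign argument) and, above all, the explicit list of witnesses $(b_-,b_+)$ for each component of the complement of the germ in the converse --- is announced but not carried out. The converse is the nontrivial ``maximality'' half and is precisely what this paper defers to~\cite{AS12}; until the witnesses and the strict inequalities $\Xi_0<0$ are actually exhibited, the argument cannot be checked. Two further cautions: do not be tempted to derive the converse from Proposition~\ref{prop:MaxSub}, since its proof presupposes the present proposition (that would be circular within this paper); and keep in mind that the partition used here differs from that of~\cite{AS12} on the boundary $u_--u_+=\lambda$, so the half-open inequalities you flag must indeed be tracked when a case analysis splits along that line.
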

\begin{defi} \label{def:MaxSub}
 A subset $\mathcal{H}_{\lambda}(v)$ of $\mathcal{G}_{\lambda}(v)$ is said to be maximal if any $(a_{-},a_{+})$ that satisfies
 \begin{equation} \label{eq:GermAppCrit}
 \forall(b_{-}, b_{+}) \in \mathcal{H}_{\lambda}(v), \quad \Xi_{v}( (a_{-},a_{+}), (b_{-}, b_{+}) ) \geq 0
\end{equation}
belongs to the germ $\mathcal{G}_{\lambda}(v)$.
\end{defi}
We will prove in Proposition~\ref{prop:MaxSub} that  $\mathcal{G}_{\lambda}^{1} \cup \mathcal{G}_{\lambda}^{2}(v)$ is maximal. In the sequel $\mathcal{H}_{\lambda}(v)$ always denotes a maximal part of $\mathcal{G}_{\lambda}(v)$. We now focus on alternative traceless characterizations of entropy solutions.
For all $(c_{-},c_{+})$ we denote by $c$ the piecewise constant function
$$ c(t,x)= c_{-} \mathbf{1}_{x < h(t)} +  c_{+} \mathbf{1}_{x \geq h(t)}, $$
and by $ \dist_{1}(a, X)$ the $L^{1}$-distance of a point $a:=(a_{-},a_{+})$ of $\R^{2}$ to a set $X$ included in $\R^{2}$:
$$ \dist_{1}((a_{-},a_{+}), X) = \inf_{(x_{-},x_{+}) \in X} |a_{-}-x_{-}|+|a_{+}-x_{+}|. $$
\begin{prop}
Let $h$ be a function of $W^{2,\infty}_{loc}(\R_{+})$ and let $u$ be a function of $L^{\infty}_{loc}(\R_{+} \times \R)$, which is an entropy solution of the Burgers equation on the sets $\{(t,x), \, x<h(t)\}$ and $\{(t,x), \, x>h(t)\}$. The following assertions are equivalent.
\begin{itemize}
 \item For almost every time $t>0$, $(u_{-}(t), u_{+}(t))$ belongs to $\mathcal{G}_{\lambda}(h'(t))$.
 \item For almost every time $t>0$, for all $(c_{-},c_{+})\in \R^{2}$, there exist $\delta \in (0,t)$ and a constant $A$ depending only on $||u^{0}||_{\infty}$, $\lambda$, $(c_{-},c_{+})$ and  $||h'||_{\infty}$ such that for every nonnegative function $\varphi$ in $\mathcal{C}_{0}^{\infty}((t-\delta, t+ \delta) \times \R)$,
\begin{equation} \label{eq:DefFluid1}
\begin{aligned}
 \dis \int_{\R_{+}} \int_{\R} & |u-c|(s,x) \partial_{t} \varphi (s,x-h(s))+  \Phi_{h'(t)}(u,c)(s,x) \partial_{x} \varphi(s,x-h(s)) dx\, ds \\
 	&  \geq -A \int_{\R_{+}} \dist_{1}((c_{-},c_{+}), \mathcal{H}_{\lambda}(h'(s)) ) \varphi(s,0) \, ds.
\end{aligned}
\end{equation}
\end{itemize}
\end{prop}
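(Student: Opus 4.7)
Pass to the moving frame $y = x - h(s)$, in which the particle is stationary at $y = 0$, and $\tilde u(s, y) := u(s, y + h(s))$ is, on each of $\{y<0\}$ and $\{y>0\}$, an entropy solution of $\partial_s \tilde u + \partial_y f_{h'(s)}(\tilde u) = 0$ with $f_v(u) = u^2/2 - v u$; the Panov--Vasseur theorem supplies strong one-sided traces $u_\pm(s)$ at $y=0$, coinciding with those of $u$ along $x = h(s)$.

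For the direction trace-formulation $\Rightarrow$ \eqref{eq:DefFluid1}, apply the Kruzhkov inequality on each half-plane with the constants $c_\pm$, use the trace theorem to integrate by parts against a test function that need not vanish on $y=0$, and add the two inequalities. This produces a bulk integrand matching the left-hand side of~\eqref{eq:DefFluid1} (with the time-dependent $\Phi_{h'(s)}$ in place of the frozen $\Phi_{h'(t)}$) together with a boundary contribution $\int \Xi_{h'(s)}((u_-(s), u_+(s)), (c_-, c_+))\,\varphi(s, 0)\,ds$. By Proposition~\ref{def:GermPos} and the germ hypothesis, $\Xi_{h'(s)}((u_-, u_+), (\bar c_-, \bar c_+)) \geq 0$ for every $(\bar c_-, \bar c_+) \in \mathcal{H}_\lambda(h'(s))$; the Lipschitz continuity of $\Xi$ in its second argument on the bounded range of $(u,c)$ upgrades this to $\Xi_{h'(s)} \geq -A\,\dist_1((c_-,c_+), \mathcal{H}_\lambda(h'(s)))$, with $A$ depending only on $||u^0||_\infty$, $\lambda$, $(c_-,c_+)$ and $||h'||_\infty$. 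Replacing $\Phi_{h'(s)}$ by the frozen $\Phi_{h'(t)}$ produces the bulk correction $\int\!\int (h'(s)-h'(t))\,|u-c|\,\partial_x \varphi(s, x-h(s))\,dx\,ds$, whose integrand is pointwise $O(||h''||_\infty \delta)$; it is absorbed by shrinking $\delta$.

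For the converse, fix a Lebesgue point $t$ of $s \mapsto (u_-(s), u_+(s))$ and any $(c_-, c_+) \in \mathcal{H}_\lambda(h'(t))$, so that $\dist_1((c_-,c_+), \mathcal{H}_\lambda(h'(t))) = 0$. Apply~\eqref{eq:DefFluid1} with $\varphi_\epsilon(s, y) = \psi(s)\,\theta(y/\epsilon)$, where $\psi \geq 0$ is supported in $(t-\delta, t+\delta)$ and $\theta \in \mathcal{C}_0^\infty(\R)$ satisfies $\theta(0) > 0$. As $\epsilon \to 0$, the $\partial_t \varphi_\epsilon$ contribution is $O(\epsilon)$, while the $\partial_x \varphi_\epsilon$ one converges, via the strong one-sided traces, to $\theta(0) \int \psi(s)\,\Xi_{h'(t)}((u_-(s), u_+(s)), (c_-, c_+))\,ds$; the right-hand side tends to $-A\,\theta(0) \int \psi(s)\,\dist_1((c_-,c_+), \mathcal{H}_\lambda(h'(s)))\,ds$. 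Dividing by $\theta(0)$ and using arbitrariness of $\psi \geq 0$, one obtains the pointwise bound $\Xi_{h'(t)}((u_-(s), u_+(s)), (c_-, c_+)) \geq -A\,\dist_1((c_-,c_+), \mathcal{H}_\lambda(h'(s)))$ for a.e.\ $s \in (t-\delta, t+\delta)$; averaging over a shrinking window around $t$, and using the Lebesgue-point property on the left together with the continuity of $s \mapsto \dist_1((c_-,c_+), \mathcal{H}_\lambda(h'(s)))$ at $t$ (where it vanishes) on the right, yields $\Xi_{h'(t)}((u_-(t), u_+(t)), (c_-, c_+)) \geq 0$ for every $(c_-, c_+) \in \mathcal{H}_\lambda(h'(t))$. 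Maximality of $\mathcal{H}_\lambda$ (Definition~\ref{def:MaxSub}) then gives $(u_-(t), u_+(t)) \in \mathcal{G}_\lambda(h'(t))$.

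The main obstacle is the bulk correction in the forward direction: while its pointwise size is $O(||h''||_\infty \delta)$, its integral is controlled a priori only by $O(\delta\,||\partial_x\varphi||_{L^1})$, which is not absorbable into $A \int \dist_1\,\varphi(s, 0)\,ds$ for a general test function. The cleanest fix is to exploit the local BV regularity of $u$ in $x$ inherited from the Oleinik one-sided estimate on each side of $h$, integrating by parts in $x$ to convert the bulk term into a boundary one of size $O(||h''||_\infty \delta) \int \varphi(s, 0)\,ds$, which can then be absorbed into the penalty by adjusting $A$ and taking $\delta$ small enough.
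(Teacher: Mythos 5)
Your argument is essentially the paper's: Kruzhkov inequalities on each side of the trajectory combined with the Panov--Vasseur strong traces produce the boundary term $\int \Xi_{h'(s)}((u_{-},u_{+}),(c_{-},c_{+}))\varphi(s,0)\,ds$, which is then bounded from below by comparing $(c_{-},c_{+})$ with its $L^{1}$-projection on $\mathcal{H}_{\lambda}(h'(s))$ via Proposition~\ref{def:GermPos} and the Lipschitz continuity of $\Phi$; the converse by concentration of test functions followed by maximality is also the paper's. The only place you depart from it is your final paragraph on the ``frozen'' flux $\Phi_{h'(t)}$ versus $\Phi_{h'(s)}$. You are right that these differ, but the paper's own computation in fact produces $\Phi_{h'(s)}$, and the subscript $t$ in \eqref{eq:DefFluid1} is best read as a typo for $s$, consistently with the right-hand side involving $\mathcal{H}_{\lambda}(h'(s))$. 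Moreover your proposed repair does not close the gap for the literally frozen version: integrating the correction term by parts against a profile that is merely $BV$ in $x$ yields a bulk measure term of order $\delta$, not a boundary term proportional to $\int\varphi(s,0)\,ds$; and in any case a defect of size $O(\delta)\int\varphi(s,0)\,ds$ cannot be absorbed into $-A\int \dist_{1}((c_{-},c_{+}),\mathcal{H}_{\lambda}(h'(s)))\varphi(s,0)\,ds$ when that distance vanishes identically, e.g.\ for $(c_{-},c_{+})\in\mathcal{G}_{\lambda}^{1}$. So the statement should be read with the $s$-dependent flux; under that reading your proof is complete and coincides with the paper's.
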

\begin{proof} For the sake of completeness we reproduce here the main ingredients of the proof that can be found in~\cite{AS12}. Let $\varphi$ be in $\mathcal{C}_{0}^{\infty}((t-\delta, t+ \delta) \times \R)$, where $\delta$ belongs to $(0,t)$. For positive $\eps$, we introduce the  function 
 $$ \zeta_{\eps}(z)= 1- \min(1, |z|/ \eps), $$
whose support is $(-\eps, \eps)$. The support of the function 
 $$ \psi_{\eps}(t,x)= (1-\zeta_{\eps}) \varphi(t,x-h(t)) $$
 is included in $\{(t,x), \, t>0, \, x \neq h(t)\}$. The function $u$ is a entropy solution of the Burgers equation on the sets $\{(t,x), \, x<h(t)\}$ and $\{(t,x), \, x>h(t)\}$, thus for all real $\kappa$,
$$ 
 \iint_{\R_{+}\times \R} |u(s,x)-\kappa| \partial_{s} \psi_{\eps}(s,x) + \Phi_{0}(u(s,x), \kappa) \partial_{x} \psi_{\eps} dx \, ds \geq 0.
 $$
But $\partial_{s} \psi_{\eps}(s,x)= \partial_{s} ((1-\zeta_{\eps}) \varphi) (s, x-h(s)) -h'(s) \partial_{x}  ((1-\zeta_{\eps}) \varphi)(s,x-h(s))$, and we using the fact that
$$ \Phi_{v}(a,b)= \Phi_{0}(a,b)-v |a-b|,$$
we obtain
$$ 
 \iint_{\R_{+}\times \R} |u-c|(s,x) (\partial_{s} (1-\zeta_{\eps}) \varphi) (s,x-h(s)) + \Phi_{0}(u, c)(s,x) \partial_{x}((1-\zeta_{\eps}) \varphi)(s, x-h(s)) dx \, ds \geq 0.
 $$
 Thus we have
 $$ 
\begin{aligned}
  & \iint_{\R_{+} \times \R}  |u-c|(s,x) \partial_{t} \varphi (s,x-h(s))+  \Phi_{h'(t)}(u,c)(s,x) \partial_{x} \varphi(s,x-h(s)) dx\, ds \\
  & \geq \liminf_{\eps \rightarrow 0} \iint_{\R_{+} \times \R} |u-c|(s,x) (\partial_{t} ( \zeta_{\eps} \varphi)) (s,x-h(s))+  \Phi_{h'(t)}(u,c)(s,x) (\partial_{x}  ( \zeta_{\eps} \varphi))(s,x-h(s)) dx \, ds \\
&=  \int_{\R_{+}} \Phi_{h'(s)}(u_{-}(s), c_{-})- \Phi_{h'(s)}(u_{+}(s), c_{+}) \varphi (s,0) ds \\
&= \int_{\R_{+}} \Xi_{h'(s)}((u_{-}(s), u_{+}(s)), (c_{-},c_{+})) \varphi (s,0) ds 
\end{aligned}
$$ 
%
For all $s$ for which the pair $(u_{-}(s), u_{+}(s))$ exists and belongs to $\mathcal{G}_{\lambda}(h'(s))$, we denote by $(\tilde{c}_{-}(s), \tilde{c}_{+}(s))$ a $L^{1}$-projection of $(c_{-},c_{+})$ on $\mathcal{H}_{\lambda}(h'(s))$. We have
$$ 
\begin{aligned}
 \Xi_{h'(s)}& ((u_{-}(s), u_{+}(s)), (c_{-}(s), c_{+}(s)))
  \geq  \Xi_{h'(s)}((u_{-}(s), u_{+}(s)), (\tilde{c}_{-}(s), \tilde{c}_{+}(s))) \\
 & - |\Xi_{h'(s)}((u_{-}(s), u_{+}(s)), (c_{-}(s), c_{+}(s)))- \Xi_{h'(s)}((u_{-}(s), u_{+}(s)), (\tilde{c}_{-}(s), \tilde{c}_{+}(s)))|.
\end{aligned}
$$
Since $(\tilde{c}_{-}(s), \tilde{c}_{+}(s))$ belongs to $\mathcal{H}_{\lambda}(h'(s))$, Proposition~\ref{def:GermPos} yields
$$ \int_{\R_{+}} \Xi_{h'(s)}((u_{-}(s), u_{+}(s)), (\tilde{c}_{-}(s),\tilde{c}_{+}(s))) \varphi (s,0) ds \geq 0. $$
On the other hand
$$ 
\begin{aligned}
 |\Xi_{h'(s)}&((u_{-}(s), u_{+}(s)), (c_{-}(s), c_{+}(s)))- \Xi_{h'(s)}((u_{-}(s), u_{+}(s)), (\tilde{c}_{-}(s), \tilde{c}_{+}(s)))| \\
 & \leq |\Phi_{h'(s)}(u_{-}(s), c_{-}(s)) - \Phi_{h'(s)}(u_{-}(s), \tilde{c}_{-}(s))| +  |\Phi_{h'(s)}(u_{+}(s), c_{+}(s)) - \Phi_{h'(s)}(u_{+}(s), \tilde{c}_{+}(s))| 
\end{aligned}
$$
which is smaller than a constant depending only on $|| h' ||_{\infty}$, $||u||_{\infty}$, $c$ and $\lambda$ (since $c \mapsto \tilde{c}$ depends on $\lambda$), multiplied by the $L^{1}$-distance between $(c_{-},c_{+})$ and $(\tilde{c}_{-}(s), \tilde{c}_{+}(s))$, and we obtain the result. 

Conversely, using a sequence of test functions $\varphi$ concentrating at a time $t$ for which $u$ has traces in Proposition~\ref{eq:DefFluid1}, we obtain that for all $(c_{-},c_{+})$ in $\mathcal{H}_{\lambda}(h'(t))$,
$$\Xi_{h'(t)}((u_{-}(t), u_{+}(t)), (c_{-},c_{+})) \geq 0, $$
and thus by Proposition~\ref{def:GermPos}, $(u_{-}(t), u_{+}(t))$ belongs to the germ $\mathcal{G}_{\lambda}(h'(t))$.
\end{proof}

\begin{prop}
Let $u$ in $L^{\infty}_{loc}(\R_{+} \times \R)$ be a solution of the Burgers equation on the sets $\{(t,x), \, x<h(t)\}$ and $\{(t,x), \, x>h(t)\}$. Consider a function $h$ in $W^{2,\infty}_{loc}(\R_{+})$ which verifies ~\eqref{eq:ODE}  with initial data $h(0)=h^{0}$ and $h'(0)=v^{0}$ almost everywhere if and only if for all $\xi \in \mathcal{C}_{0}^{\infty}([0,T))$ and for all $\psi \in  \mathcal{C}_{0}^{\infty}(\R)$ such that $\psi(0)=1$
 \begin{equation} \label{eq:DefPart}
 \begin{aligned}
 -\int_{0}^{T} m_{p} h'(t) \xi'(t) dt = \ & m_{p} v^{0} \xi(0) +\int_{\R} \int_{0}^{T} \frac{u^{2}}{2}(s,x) \xi(s) \psi'(x-h(s)) ds \, dx \\
 	&+ \int_{\R} \int_{0}^{T} u(s,x) [ \xi'(s)-h'(s) \psi'(x-h(s))]ds \, dx \\
	&+ \int_{\R} u^{0}(x) \psi(x-h(0)) \xi(0) dx.
\end{aligned}
\end{equation}
\end{prop}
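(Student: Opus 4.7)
The plan is to view~\eqref{eq:DefPart} as the weak formulation obtained by testing the distributional Burgers equation on $\R_+\times\R$ against $\Psi(t,x)=\xi(t)\,\psi(x-h(t))$, a smooth compactly supported function that equals $\xi(t)$ along the particle trajectory since $\psi(0)=1$.

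The pivotal observation, which I would establish first, is that for $u$ solving the Burgers equation on each side of $\{x=h(t)\}$, the ODE~\eqref{eq:ODE} is equivalent to the single global distributional identity
$$\partial_t u + \partial_x\!\left(\tfrac{u^2}{2}\right) = -m_p h''(t)\,\delta_{x=h(t)}\quad\text{on }\R_+\times\R,$$
with initial datum $u^0$. Indeed, the traces $u_\pm$ along the trajectory (provided by~\cite{P07,V01}) combined with the Rankine-Hugoniot jump formula give $\partial_t u + \partial_x(u^2/2) = -(u_--u_+)\bigl((u_-+u_+)/2-h'(t)\bigr)\,\delta_{x=h(t)}$, so matching this with $-m_p h''(t)\,\delta_{x=h(t)}$ is exactly~\eqref{eq:ODE}.

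For the direct implication I would plug $\Psi$ into this distributional PDE. A direct computation gives
$$\partial_t\Psi = \xi'(t)\,\psi(x-h(t)) - \xi(t)\,h'(t)\,\psi'(x-h(t)),\qquad \partial_x\Psi = \xi(t)\,\psi'(x-h(t)),$$
and using $\Psi(t,h(t))=\xi(t)$ together with $\Psi(0,x)=\xi(0)\,\psi(x-h^0)$, the weak formulation of the PDE becomes
$$\int_{\R_+}\!\int_\R u\,\partial_t\Psi + \tfrac{u^2}{2}\,\partial_x\Psi\, dx\, dt + \xi(0)\int_\R u^0(x)\psi(x-h^0)\, dx = \int_0^T m_p h''(t)\,\xi(t)\, dt.$$
A single integration by parts in time on the right, using $h'(0)=v^0$ and $\xi(T)=0$, then rearranges the identity into~\eqref{eq:DefPart}.

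For the converse I would reverse this argument. First restrict to $\xi\in\mathcal{C}_0^\infty((0,T))$ so that the $t=0$ boundary terms drop; reassembling $\partial_t\Psi$ and $\partial_x\Psi$ turns~\eqref{eq:DefPart} into $\langle \partial_t u + \partial_x(u^2/2),\Psi\rangle = m_p\int_0^T h''(t)\,\xi(t)\,dt$ in $\mathcal{D}'((0,T)\times\R)$. Since $u$ solves Burgers on each side of the curve, the left-hand distribution has the form $\sigma(t)\,\delta_{x=h(t)}$; pairing it with $\Psi$ and exploiting the freedom in $\xi$ forces $\sigma(t)=-m_p h''(t)$ a.e., which by the first step is~\eqref{eq:ODE}. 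The condition $h'(0)=v^0$ is then recovered by allowing $\xi(0)\neq 0$ and comparing the resulting boundary contributions at $t=0$. I expect the main technical obstacle to be the rigorous localization of the distribution $\partial_t u + \partial_x(u^2/2)$ as a measure carried by the Lipschitz curve $\{x=h(t)\}$, for which the trace theory invoked after Definition~\ref{def:Traces} is essential.
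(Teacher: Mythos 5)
Your argument is correct and follows essentially the same route as the paper's: the distributional identity $\partial_t u+\partial_x\bigl(\tfrac{u^2}{2}\bigr)=-m_ph''(t)\,\delta_{x=h(t)}$ that you isolate as a preliminary lemma is precisely the Green--Gauss/trace computation the paper carries out when it produces the interface term $\bigl(\tfrac{u_-^2}{2}-h'u_-\bigr)-\bigl(\tfrac{u_+^2}{2}-h'u_+\bigr)=m_ph''$, and the subsequent pairing with $\xi(t)\psi(x-h(t))$ followed by an integration by parts in time is identical (you are merely more explicit about the converse direction, which in the paper is the same chain of equalities read backwards). Note that, like the paper's own proof, you implicitly read the second integrand of~\eqref{eq:DefPart} as $u\,\partial_s\bigl(\xi(s)\psi(x-h(s))\bigr)=u\bigl[\xi'(s)\psi(x-h(s))-\xi(s)h'(s)\psi'(x-h(s))\bigr]$, which is the intended (and correct) form of the statement.
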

\begin{proof} This characterization were proved in~\cite{ALST10}. It follows from the application of the Green--Gauss theorem and the fact that $u$ is an entropy solution of the Burgers equation away from the particle:
$$ 
\begin{aligned}
\int_{0}^{T} \int_{\R}  & \frac{u^{2}}{2}(s,x) \xi(s) \psi'(x-h(s)) + u(s,x) [ \xi'(s)-h'(s) \psi'(x-h(s))] ds \, dx \\
 & = \int_{0}^{T} \int_{\R}  \frac{u^{2}}{2}(s,x) \partial_{x} ( \xi \psi (x-h(s)))  + u(s,x) \partial_{s} ( \xi \psi (x-h(s))) ds \, dx \\
 &=  -\int_{0}^{T} \int_{x \neq h(t)} \left(\partial_{x} \frac{u^{2}}{2} + \partial_{t} u \right) (\xi \psi) dx \, ds - \int_{\R} u^{0}(x) \psi(x-h(0)) \xi(0)dx \\
 & \hphantom{=  \,}+ \int_{0}^{T} \xi(s) \left( \left( \frac{u_{-}^{2}(s)}{2} - h'(s) u_{-}(s)\right) -\left( \frac{u_{+}^{2}(s)}{2} - h'(s) u_{+}(s)\right) \right) ds \\
&= \int_{0}^{T} m_{p} h''(s) \xi(s) ds - \int_{\R} u^{0}(x) \psi(x-h(0)) \xi(0) dx.
\end{aligned}
$$
\end{proof}

We now present the family of finite volume schemes for which we prove convergence. The proof follows the guidelines of the Lax--Wendroff theorem. In Section~\ref{S:Extract}, we obtain a $BV$ bound on the fluid velocity and a $W^{2, \infty}$ bound on the particle's trajectory that allows to extract convergent subsequences in $L^{1}_{loc}(\R_{+} \times \R)$ and $W^{1, \infty}_{loc}(\R_{+})$. The difficulties are to treat numerically the interface conditions enclosed in the germ and the coupling between an ODE and a PDE. 
More precisely:
\begin{itemize}
 \item First, we have to take into account at the numerical level the interface condition of Definition~\ref{def:germ1}. We will use schemes that preserves a ``sufficiently large'' part of the germ.
 \item Second, to deal with a moving particle. It is crucial that the particle lies at an interface of the mesh at the beginning of the time step. To do so and avoid the problem of the replacement of the particle, we use a mesh that tracks the particle and we update the particle's velocity by conservation of total impulsion.
 \end{itemize}
 Let us fix a time step $\Delta t$ and a space step $\Delta x$. In the sequel we suppose that the time step and the space step are proportional, and we denote by $\mu= \frac{\Delta t}{\Delta x}$ their ratio. We propose to approximate the solution of~\eqref{eq:CauchyPb} with a finite volume scheme. We use a mesh that follows the particle, which is placed between the cells numbered $0$ and $1$. The speed of the particle is approximated by a piecewise constant $(v^{n})_{n \in \N}$. Given the solution a time $n \Delta t$: we consider that the particle has constant velocity $v^{n}$ on the whole time step $(n \Delta t, (n+1) \Delta t)$ to update the fluid velocity, then we update $v^{n}$ by conservation of the total impulsion. The interface $1/2$ where the particle lies is special, and we have to use appropriate fluxes at this interface. Due to the source term, the equation is not conservative around the particle, thus we have two different fluxes $f_{1/2}^{n,-}$  and $f_{1/2}^{n,+}$ on the left and on the right of the particle respectively. Away from the particle, Equation~\eqref{eq:CauchyPb} writes as a scalar conservation law, and we can use any standard flux for the Burgers equation. 
The scheme is initialized with
$$ \forall j \in \Z, \ u_{j}^{0}= \frac{1}{\Delta x} \int_{x_{j-1/2}^{0}}^{x_{j+1/2}^{0}}u^{0}(x) \, dx. $$
From the integration of the first equation of~\eqref{eq:CauchyPb} on the space time cell
 $$ \mathcal{C}_{j}^{n}= \{(n \Delta t+s,x_{j-1/2}^{n}+y+s v^{n}), 0 \leq s < \Delta t, 0 \leq y < \Delta x \}, $$
 we obtain the finite volume scheme
 \begin{equation} \label{eq:CoupledScheme}
 \begin{cases}
 u_{j}^{n+1} &= u_{j}^{n}- \mu (f_{j+1/2}^{n}(v^{n}) - f_{j-1/2}^{n} (v^{n}))  \text{ for $j \in \Z, j \notin \{0, 1\}$}, \\
 u_{0}^{n+1} &= u_{0}^{n}- \mu (f_{1/2,-}^{n}(v^{n}) - f_{-1/2}^{n}(v^{n})), \\
 u_{1}^{n+1} &= u_{1}^{n}- \mu (f_{3/2}^{n}(v^{n}) - f_{1/2,+}^{n}(v^{n})), \\
 v^{n+1} & = v^{n} + \frac{\Delta t}{m_{p}} (f_{1/2,-}^{n}(v^{n})-f_{1/2,+}^{n}(v^{n})), \\
 x_{j}^{n+1}& =x_{j}^{n}+ v^{n}\Delta t.
\end{cases}
\end{equation}
Here we emphasized the dependency of the flux on the particle's velocity.  In the sequel we denote by $u_{\Delta t}$ the constant by cell function
\begin{equation} \label{def:udelta}
  u_{ \Delta t} (t,x) = u_{j}^{n} \ \  \text{ if } \ \  (t,x) \in \mathcal{C}_{j}^{n}.
\end{equation}
and  by $v_{\Delta t}$ and $h_{\Delta t}$ the constant and linear by cell functions:
\begin{equation} \label{def:hdelta}
 \begin{cases}
 v_{ \Delta t} (t) &= v^{n} \ \  \text{ if } n \Delta t \leq t < (n+1) \Delta t,  \\
 h_{ \Delta t} (t) &= h^{0} +   \Delta t \sum_{m=0}^{n-1} v^{m} + v^{n}(t-n \Delta t)  \ \  \text{ if } n \Delta t \leq t < (n+1) \Delta t. 
\end{cases}
\end{equation}

Another way to proceed is to performed the change of variable 
$$ \tilde{u}(t,x)= u(t, x+h(t)) $$
in~\eqref{eq:CauchyPb}. This function verifies the PDE
\begin{equation} \label{eq:redressement}
 \partial_{t} \tilde{u} + \partial_{x} \left(  \frac{\tilde{u}^{2}}{2}- h'(t) \tilde{u} \right) = -\lambda (\tilde{u}-h') \delta_{0}(x)
\end{equation}
The particle is now motionless but the flux depends on time. We denote by $f_{v}(u)= \frac{u^{2}}{2}-vu$. Integrating~\eqref{eq:redressement} on $[n \Delta t, (n+1) \Delta t] \times [x_{j-1/2}^{0}, x_{j+1/2}^{0}]$, and using special flux around the particle (still placed at interface $1/2$), we obtain the finite volume scheme
 \begin{equation} \label{eq:CoupledSchemeR}
 \begin{cases}
 \tilde{u}_{j}^{n+1} &= \tilde{u}_{j}^{n}- \mu (f_{j+1/2}^{v^{n},n} - f_{j-1/2}^{v^{n},n})  \text{ for } j \in \Z \setminus \{0, 1\}, \\
 \tilde{u}_{0}^{n+1} &= \tilde{u}_{0}^{n}- \mu (f_{1/2}^{v^{n}, n -} - f_{-1/2}^{v^{n},n}), \\
 \tilde{u}_{1}^{n+1} &= \tilde{u}_{1}^{n}- \mu (f_{3/2}^{v^{n}, n} - f_{1/2}^{v^{n}, n +}), \\
 v^{n+1} & = v^{n} + \frac{\Delta t}{m_{p}} (f_{1/2}^{v^{n},n-}-f_{1/2}^{v^{n},n+}). 
\end{cases}
\end{equation}
The two points of view are illustrated on Figure~\ref{F:POV}. 
\begin{psfrags}
 \psfrag{eq1}{$\partial_{t} u + \partial_{x} \frac{u^{2}}{2}= -\lambda (u-h'(t))\delta_{h(t)}(x)$}
 \psfrag{eq2}{$\partial_{t} \tilde{u} + \partial_{x} \left(  \frac{\tilde{u}^{2}}{2}- h'(t) \tilde{u} \right) = -\lambda (\tilde{u}-h') \delta_{0}(x)$}
 \psfrag{t^1}{$t^{1}$}
 \psfrag{t^2}{$t^{2}$}
 \psfrag{vn}{$v^{n}$}
 \psfrag{x_1/2^0}{$x_{1/2}^{0}$}
 \psfrag{x_1/2^1}{$x_{1/2}^{1}$}
\begin{figure}[htp]
\centering
 \includegraphics[width=\linewidth]{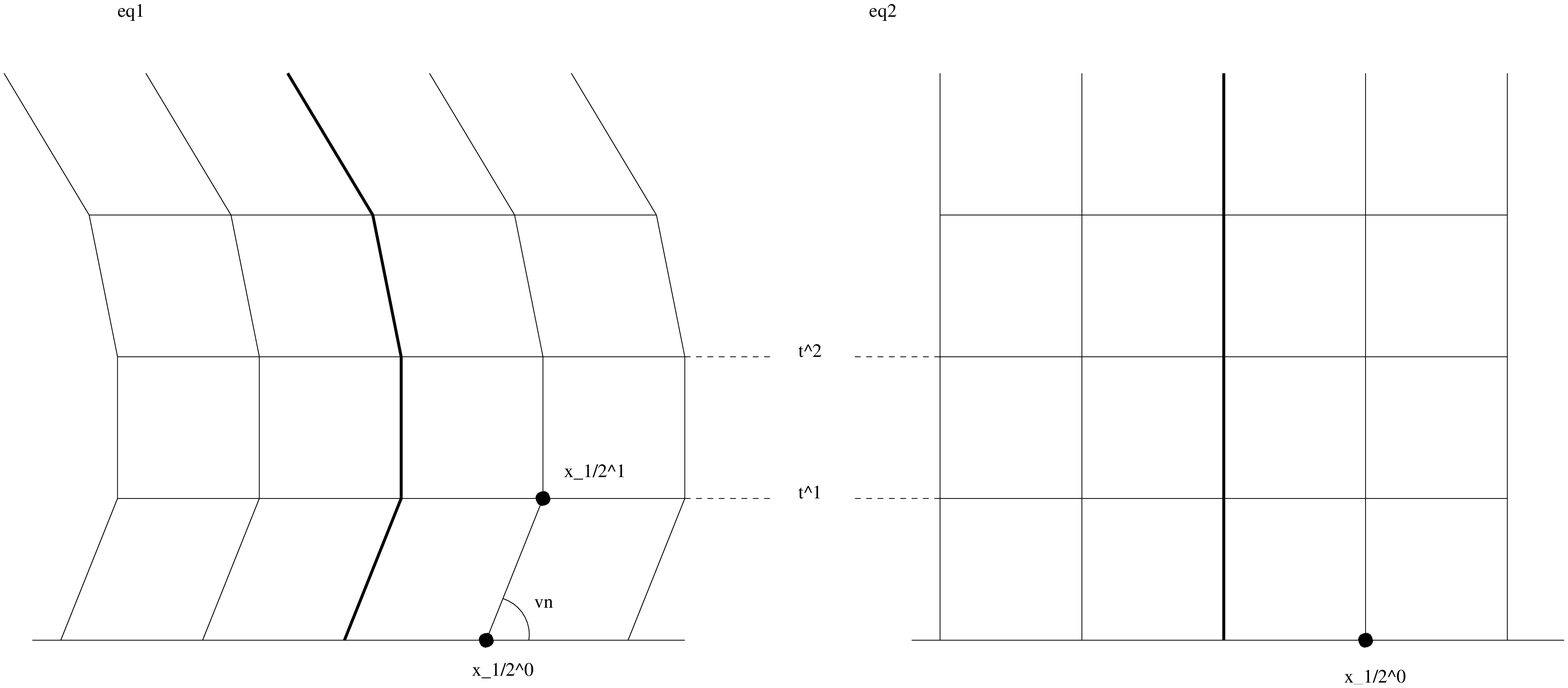}
 \caption[Two different ways to approximate the solution.]{To approximate the solution of~\eqref{eq:CauchyPb}, we can either use a mesh that follows the particle (on the left) or straighten the particle's trajectory and approximate the solution of~\eqref{eq:redressement}. In both case, the particle's trajectory is the bold line.} \label{F:POV}
\end{figure}
\end{psfrags}

The fluxes $f_{j-1/2}^{n}(v^{n})$ with $j \neq 1/2$ (or $f_{1/2}^{n \pm}(v^{n})$ if $j=1/2$) are strongly related to the fluxes $f_{j-1/2}^{v^{n},n}$: in~\eqref{eq:CoupledScheme}, $f_{j+1/2}^{n}(v^{n})$ is an approximation of
$$ \frac{1}{\Delta t} \int_{n \Delta t}^{(n+1) \Delta t}(f^{0}(u)-v^{n} u)(t,x_{j+1/2}^{n}+ v^{n} t)  dt $$
while in~\eqref{eq:CoupledSchemeR}, $f_{-1/2}^{v^{n},n}$ is an approximation of
$$ \frac{1}{\Delta t} \int_{n \Delta t}^{(n+1) \Delta t} f^{v^{n}}(\tilde{u})(t,x_{j+1/2}^{0}) dt $$

In the following we prove the convergence of Scheme~\eqref{eq:CoupledScheme} under a set of assumptions on the fluxes $f_{j+1/2}^{n}$, $f_{1/2,-}^{n}$ and $f_{j+1/2,+}^{n}$ and a Courant-Friedrichs-Lewy condition.  We restrict the study to two-points fluxes
 $$ f_{j+1/2}^{n}= g(u_{j}^{n}, u_{j+1}^{n}, v^{n}) \ \ \text{ and } \ \ \ f_{1/2, \pm}^{n}= g^{\pm}_{\lambda}(u_{j}^{n}, u_{j+1}^{n}, v^{n}). $$
 The assumptions on the flux $f_{j+1/2}^{n}$ away from the particle are the classical ones:
 \begin{itemize}
  \item consistency with the modified Burgers equation:
  \begin{equation} \label{gcons}
 \forall a \in \R, \ \forall v \in \R, \ g(a, a, v)= \frac{a^{2}}{2} -v a ,
\end{equation}
  \item monotonicity with respect to the first two arguments: 
\begin{equation} \label{gmono}
 \forall (a,b) \in \R^{2}, \ \forall v \in \R, \ \  \partial_{1} g(a,b,v) \geq 0 \ \ \text{ and } \ \ \partial_{2} g(a,b,v) \leq 0.
\end{equation}
  \item \inlineequation[gLip]{\text{ $g$ is locally Lipschitz-continuous;}} 
\end{itemize}
they ensure convergence of the scheme to an entropy solution of the Burgers equation away from the particle.

The assumptions on the fluxes around the particle are the following. We first have some consistency assumptions, which ensure that some particular solutions corresponding to a large enough part of the germ are exactly preserved by the numerical scheme. 
 We do not ask the flux to preserve the whole germ though, but only, in Section~\ref{S:ConvMax} with a \emph{maximal} part of the germ, and in Section~\ref{S:ConvLine}, with $\mathcal{G}_{\lambda}^{1}$. More precisely, the hypothesis on the fluxes $g^{\pm}_{\lambda}$ are:
\begin{itemize}
 \item consistency the part $\mathcal{G}_{\lambda}^{1}$ of the germ: 
\begin{equation} \label{eq:WBG1}
 \forall v \in \R, \ \forall (a,b) \in \mathcal{G}_{\lambda}^{1}, \ g^{-}_{\lambda}(a,b,v)= \frac{a^{2}}{2} - v a \ \ \text{ and } \ \ g^{+}_{\lambda}(a,b,v)= \frac{b^{2}}{2} -v b.
\end{equation}
In Section~\ref{S:ConvLine}, we make the stronger assumption that $g$ is consistent with a \emph{maximal} subset $\mathcal{H}_{\lambda}$ of $\mathcal{G}_{\lambda}$ (see Definition~\ref{def:MaxSub})
 \begin{equation} \label{eq:WBMax}
 \forall v \in \R, \ \forall (a,b) \in \mathcal{H}_{\lambda}(v), \ g^{-}_{\lambda}(a,b,v)= \frac{a^{2}}{2} - v a \ \ \text{ and } \ \ g^{+}_{\lambda}(a,b,v)= \frac{b^{2}}{2} -v b.
\end{equation}
\end{itemize}
Hypothesis~\eqref{eq:WBG1} will be used to prove $BV$ estimates on the fluid part $(u_{j}^{n})_{j \in \Z, n \in \N}$.
We also assume that
\begin{itemize}
 \item if the particle has the same velocity than the fluid, its velocity does not change:
\begin{equation} \label{eq:WBTip}
 \forall v \in \R, \ g^{-}_{\lambda}(v,v,v)= g^{+}_{\lambda}(v,v,v).
\end{equation}
\end{itemize}
 This hypothesis will be used to prove a $L^{\infty}$ bound on the particle velocity $(v^{n})_{n \in \N}$.
We add two classical conditions of regularity and monotonicity, also used to prove the $BV$ bound on $(u_{j}^{n})_{ j \in \Z, n \in \N}$. We assume that:
\begin{itemize}
 \item   \inlineequation[gpmLip]{\text{both $g^{-}_{\lambda}$ and $g^{+}_{\lambda}$ are locally Lipschitz-continuous; \hfill ~ }}
 
 \item $g^{-}_{\lambda}$ and $g^{+}_{\lambda}$ are nondecreasing with respect to their first arguments, and nonincreasing with respect to their second  \inlineequation[gpmmono]{\text{arguments.}}
\end{itemize}
 Just like in~\cite{AS12}, we need a dissipativity property to prove discrete entropy inequalities. Moreover, it will also be a key assumption to prove the bounds on the particle's velocity.
\begin{itemize} 
 \item \inlineequation[dissipativity]{\text{The function $g^{-}_{\lambda}-g^{+}_{\lambda}$ is nondecreasing with respect to its first two arguments.}} 
\end{itemize}
For this family of finite volume schemes, we are able to prove the following convergence theorem.
\begin{thm} \label{thm:conv}
 Consider a finite volume scheme of the form~\eqref{eq:CoupledScheme} that satisfies the set of hypothesis (\ref{gcons}--\ref{eq:WBG1}) and (\ref{eq:WBTip}--\ref{dissipativity}), and~\eqref{eq:WBMax} in Section~\ref{S:ConvLine}. Suppose that $u^{0}$ belongs to $BV(\R) \cap L^{1}(\R)$. Let us denote by $L$ the largest Lipschitz constant of $g$, $g^{+}$ and $g^{-}$ on the set $[m,M]^{2} \times [\underline{v}, \bar{v}]$, where
$$ 
\begin{cases}
 m= \min \{ \essinf_{\R^{-}} u^{0}- \lambda,\essinf_{\R^{+}} u^{0} \},  \\
 M= \max \{ \esssup_{\R^{-}} u^{0} ,\esssup_{\R^{+}} u^{0} + \lambda \}, \\
  \underline{v} = \min(m, v^{0}), \\
  \bar{v}= \max(M, v^{0}). 
\end{cases}
$$ 
Then, under the Courant-Friedrichs-Lewy condition
\begin{equation} \label{eq:CFL}
 L \mu \leq \frac{1}{2},
\end{equation}
the sequence $(u_{\Delta t})_{}$ converges in $L^{1}_{loc}(\R_{+} \times \R)$ toward $u$ and the sequence $(h_{\Delta t})_{}$ converges in $W^{1, \infty}_{loc}(\R_{+})$ toward $h$ when $\Delta t$ tends to $0$, where $(h,u)$ is the solution of~\eqref{eq:CauchyPb}.
\end{thm}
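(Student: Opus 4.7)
The plan is to follow the Lax--Wendroff paradigm: obtain uniform a priori estimates on the discrete quantities $(u_j^n,v^n)$, use them to extract a subsequence converging in the announced topologies, and then pass to the limit in the discrete analogues of the two traceless characterizations established in the preceding propositions. Uniqueness for System~\eqref{eq:CauchyPb}, proved in~\cite{ALST13}, will then upgrade subsequential convergence to convergence of the full sequence.

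The first step is the uniform estimates. For the fluid bound $u_j^n\in[m,M]$, I would note that the constant profiles $(M,M-\lambda)$ and $(m+\lambda,m)$ astride the particle belong to $\mathcal{G}_\lambda^1$ and hence, by~\eqref{eq:WBG1}, are stationary solutions of the scheme; a comparison argument based on the monotonicities~\eqref{gmono}--\eqref{gpmmono} and the CFL condition~\eqref{eq:CFL} (which makes each update a convex combination) then confines $u_j^n$ in $[m,M]$. For the particle bound $v^n\in[\underline v,\bar v]$, I would use~\eqref{eq:WBTip} as an equilibrium identity and the dissipativity~\eqref{dissipativity} to show that the increment $v^{n+1}-v^n$ has a favorable sign whenever $v^n$ saturates an extreme. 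Next I would derive $BV$ bounds on $(u_j^n)_j$: away from the particle a Harten-type argument using~\eqref{gmono} and~\eqref{gLip} suffices, while across the particle the monotonicity of $g^-_\lambda-g^+_\lambda$ in its first two arguments (hypothesis~\eqref{dissipativity}) is exactly what prevents the discrete variation from blowing up. A $BV$ bound in time follows from the scheme, and then $|v^{n+1}-v^n|\leq C\Delta t$ yields the $W^{2,\infty}$ bound on $h_{\Delta t}$.

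The compactness step is then routine: Helly's theorem gives $u_{\Delta t}\to u$ in $L^1_{loc}(\R_+\times\R)$ along a subsequence, while Arzelà--Ascoli gives $h_{\Delta t}\to h$ in $W^{1,\infty}_{loc}$. A standard Lax--Wendroff argument, using~\eqref{gcons} and~\eqref{gmono}, shows that $u$ is a Kruzhkov entropy solution of the Burgers equation on each of $\{x<h(t)\}$ and $\{x>h(t)\}$. To recover the interface condition I would establish discrete Kruzhkov inequalities of the form
$$
|u_j^{n+1}-c_j|\leq |u_j^n-c_j|-\mu\bigl(\Phi^n_{j+1/2}-\Phi^n_{j-1/2}\bigr),
$$
with $(c_-,c_+)\in\R^2$ arbitrary, $c_j$ equal to $c_-$ or $c_+$ according to the side, and $\Phi^n_{j\pm1/2}$ numerical entropy fluxes compatible with $\Phi_{v^n}$; summing against test functions that follow the particle and passing to the limit should deliver~\eqref{eq:DefFluid1}. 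The ODE characterization~\eqref{eq:DefPart} would be recovered by summing the fourth line of~\eqref{eq:CoupledScheme} against $\xi$ and matching it with the discrete momentum sum tested against $\psi(x-h_{\Delta t}(s))$.

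The hard part will be the interface term in the discrete entropy inequalities. Since~\eqref{eq:WBG1} only guarantees consistency with $\mathcal{G}_\lambda^1$, one cannot expect $\Phi_{v^n}(u_0^n,c_-)-\Phi_{v^n}(u_1^n,c_+)$ to be nonnegative for arbitrary $(c_-,c_+)$. The remedy is exactly the mechanism encoded in the previous proposition: project $(c_-,c_+)$ onto the maximal part $\mathcal{H}_\lambda(v^n)$ and control the residue by $\dist_1((c_-,c_+),\mathcal{H}_\lambda(v^n))$, which reappears as the right-hand side of~\eqref{eq:DefFluid1}; this is why~\eqref{eq:WBG1} (or the stronger~\eqref{eq:WBMax}) is needed. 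A second, more subtle difficulty is the coupling between the ODE and PDE limits: passing to the limit in the update of $v^n$ requires the strong convergence of $v_{\Delta t}$ supplied by the $W^{2,\infty}$ bound together with strong compactness of the interface fluxes $g^\pm_\lambda(u_0^n,u_1^n,v^n)$, for which the existence of strong traces of $u$ along $h$ in the sense of~\cite{P07,V01} is the essential analytical input.
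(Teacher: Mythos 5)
Your overall architecture (a priori bounds, compactness, discrete entropy inequalities with a projection onto a maximal subset of the germ, weak formulation for the momentum balance) matches the paper's, but two of your key steps would not close as stated. First, the $BV$ bound across the particle. You attribute it to the dissipativity hypothesis~\eqref{dissipativity}; in fact that hypothesis plays no role there (it is used for the bound on $v^{n}$ and for the sign of the interface term in the discrete entropy inequality, Lemma~\ref{lemma:GermDiss}). A Harten-type incremental form fails directly on $(u_{j}^{n})$ at the interface because $g^{\pm}_{\lambda}(a,a,v)$ need not equal $g(a,a,v)$, and no plain TVD property can hold: the variation of the exact solution may increase by $2\lambda$ at $t=0^{+}$. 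The paper's mechanism is the shifted variable $w_{j}^{n}=u_{j}^{n}-\lambda/2$ for $j\le 0$ and $u_{j}^{n}+\lambda/2$ for $j\ge 1$, for which the consistency~\eqref{eq:WBG1} with $\mathcal{G}_{\lambda}^{1}$ gives $g^{-}_{\lambda}(w_{0}^{n}+\tfrac{\lambda}{2},w_{0}^{n}-\tfrac{\lambda}{2},v)=g(w_{0}^{n}+\tfrac{\lambda}{2},w_{0}^{n}+\tfrac{\lambda}{2},v)$ and restores the incremental form globally, yielding~\eqref{eq:TVDBound} with its $+2\lambda$ allowance. (Your comparison with the stationary profiles $(M,M-\lambda)$ and $(m+\lambda,m)$ is a valid alternative for the $L^{\infty}$ bound only.)

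Second, and more seriously, you treat~\eqref{eq:WBG1} and~\eqref{eq:WBMax} as interchangeable in the projection step. The projection argument requires the projected state $(\tilde c_{-},\tilde c_{+})\in\mathcal{H}_{\lambda}(v^{n})$ to be an exact discrete steady state, i.e.\ consistency with a \emph{maximal} subset of the germ; under~\eqref{eq:WBG1} alone you can only project onto $\mathcal{G}_{\lambda}^{1}$, which is not maximal, so the resulting inequality does not force the traces into $\mathcal{G}_{\lambda}(h'(t))$. This is precisely why the paper devotes all of Section~\ref{S:ConvLine} to the schemes satisfying only~\eqref{eq:WBG1}: a local-in-time comparison with an auxiliary scheme started from data in $\mathcal{G}_{\lambda}^{2}(h'(t_{0}))$, the monotonicity and confinement properties of that scheme (Lemma~\ref{lemma:PropScheme}), and a Bardos--LeRoux--N\'ed\'elec boundary-value argument showing the boundary condition is inactive. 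Your proposal omits this entirely. Two smaller points: the velocity bound $v^{n}\in[\underline v,\bar v]$ requires the additional restriction $4L\Delta t/m_{p}\le 1$ (harmless as $\Delta t\to 0$, but your ``favorable sign at the extremes'' argument can overshoot without it), and passing to the limit in~\eqref{eq:DefPart} does not rest on strong traces of $u$ along $h$ --- that weak formulation is designed precisely to avoid them.
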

The next three Sections are devoted to the proof. In Section~\ref{S:Extract}, we prove bounds on the total variation of the fluid and on the acceleration of the particle, which permit us to extract converging subsequences. Then in Section~\ref{S:ConvMax}, we prove Theorem~\ref{thm:conv} under Hypothesis~\eqref{eq:WBMax}, which is sufficient to obtain a discrete version of~\eqref{eq:DefFluid1}. In Section~\ref{S:ConvLine}, we drop hypothesis~\eqref{eq:WBMax} and prove the convergence of the family of schemes such that
$$ 
\begin{cases}
 g^{-}_{\lambda}(a,b,v)&=g(a,b+\lambda,v), \\
 g^{+}_{\lambda}(a,b,v)&=g(a-\lambda,b,v),
\end{cases}
$$
where $g$ satisfies assuptions (\ref{gcons}--\ref{gLip}). This type of schemes was introduced in~\cite{AS12}. They only preserve the part $\mathcal{G}_{\lambda}^{1}$ of the germ, in the sense that if $(a,b)$ belongs to $\mathcal{G}_{\lambda}^{1}$, then
$$ g^{-}_{\lambda}(a,b,v)= f^{v}(a) \quad \text{ and } \quad g^{-}_{\lambda}(a,b,v)= f^{v}(b). $$
We recall that $\mathcal{G}_{\lambda}^{1}$ is not a maximal subset of $\mathcal{G}_{\lambda}(v)$. Under the set of assumptions specified above (except~\eqref{eq:WBMax}) we extend the proof of convergence of~\cite{AS12} to the fully coupled case~\eqref{eq:CauchyPb}.  

\section{A priori bounds}\label{S:Extract}
In the sequel we suppose that $u^{0}$ belongs to $L^{1}(\R) \cap BV(\R)$, that Hypothesis~\eqref{gcons}, \eqref{gmono} and~\eqref{gLip} on the flux $g$ are fulfilled, and that the monotonicity and regularity assumptions~\eqref{gpmmono} and~\eqref{gpmLip} on $g^{\pm}$ are verified. We will specify the consistency hypothesis on $g^{\pm}$ along the way. We first consider the uncoupled problem where $(v^{n})_{n \in \N}$ is fixed.

\begin{prop} \label{prop:FluidBound}
Let $u^{0}$ be in $BV(\R) \cap L^{1}(\R)$. Let $(v^{n})_{n \in \N}$ be given and $\underline{v}$ and $\bar{v}$ in $\R$ such that 
$$ \forall n \in \N, \ \underline{v} \leq v^{n} \leq \bar{v}. $$ 
Consider the finite volume scheme
\begin{equation*} 
\begin{cases}
 u_{j}^{n+1} &= u_{j}^{n}- \mu (g( u_{j}^{n}, u_{j+1}^{n}, v^{n}) - g(u_{j-1}^{n}, u_{j}^{n},v^{n}))  \text{ for } j \in \Z \setminus \{ 0,1 \}, \\
 u_{0}^{n+1} &= u_{0}^{n}- \mu (g^{-}_{\lambda}( u_{0}^{n}, u_{1}^{n}, v^{n})- g(u_{-1}^{n}, u_{0}^{n},v^{n}) ), \\
 u_{1}^{n+1} &= u_{1}^{n}- \mu (g( u_{1}^{n}, u_{2}^{n}, v^{n}) - g^{+}_{\lambda}( u_{0}^{n}, u_{1}^{n}, v^{n}) ) . 
\end{cases}
\end{equation*}
Suppose that the fluxes $g^{\pm}$ verify~\eqref{eq:WBG1} and that the CFL condition~\eqref{eq:CFL} holds. Then we have the following $L^{\infty}$ and $BV$ estimates in space on $u_{\Delta t}$, with $m$ and $M$ the constants of Theorem~\ref{thm:conv}:
\begin{equation} \label{eq:LinfBound}
  \forall n \geq 0, \forall j \in \Z, \ m \leq u_{j}^{n+1} \leq M
\end{equation}
and
\begin{equation} \label{eq:TVDBound}
\forall n \in \N, \ \sum_{j \in \Z} |u_{j}^{n}-u_{j-1}^{n}| \leq \sum_{j \in \Z} |u_{j}^{0}-u_{j-1}^{0}| + 2 \lambda.
\end{equation}
\end{prop}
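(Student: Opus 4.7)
The plan is to combine the cell-by-cell monotonicity of the scheme under the CFL condition \eqref{eq:CFL} with the consistency of the interface fluxes along $\mathcal{G}_{\lambda}^{1}$ encoded by \eqref{eq:WBG1}, treating the $L^{\infty}$ and the $BV$ bounds separately. First I would check that, thanks to \eqref{gmono}, \eqref{gpmmono}, \eqref{gLip}, \eqref{gpmLip} and \eqref{eq:CFL}, the update $u_{j}^{n+1}$ is a nondecreasing function of each of $u_{j-1}^{n}, u_{j}^{n}, u_{j+1}^{n}$: the off-diagonal partial derivatives are $\mu\,\partial_{1}g$, $-\mu\,\partial_{2}g$, $\mu\,\partial_{1}g^{-}_{\lambda}$, $-\mu\,\partial_{2}g^{+}_{\lambda}$ etc., all nonnegative by the monotonicity hypotheses, while the diagonal entry is of the form $1-\mu(\partial_{1}-\partial_{2})$, which stays nonnegative since $2L\mu\leq 1$.

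The $L^{\infty}$ bound \eqref{eq:LinfBound} then follows by comparison with an explicit piecewise constant stationary barrier. Setting $\bar{u}_{j}=M$ for $j\leq 0$ and $\bar{u}_{j}=M-\lambda$ for $j\geq 1$, one has $(\bar{u}_{0},\bar{u}_{1})\in\mathcal{G}_{\lambda}^{1}$, hence \eqref{eq:WBG1} gives $g^{-}_{\lambda}(\bar{u}_{0},\bar{u}_{1},v)=f_{v}(M)$ and $g^{+}_{\lambda}(\bar{u}_{0},\bar{u}_{1},v)=f_{v}(M-\lambda)$, while \eqref{gcons} gives $g(\bar{u}_{j},\bar{u}_{j+1},v)=f_{v}(\bar{u}_{j})$ away from the interface, so that $\bar{u}^{n+1}=\bar{u}^{n}$ for every admissible $v$. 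Because the definition of $M$ precisely ensures $u_{j}^{0}\leq\bar{u}_{j}$ everywhere, monotone propagation forces $u_{j}^{n}\leq M$ for all $n$. The lower bound is obtained from the symmetric barrier $\underline{u}_{j}=m+\lambda$ for $j\leq 0$, $\underline{u}_{j}=m$ for $j\geq 1$.

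For the total variation bound \eqref{eq:TVDBound}, the key idea is to rewrite the whole scheme, including at the interface, in a standard Harten incremental form acting on the \emph{adjusted} jumps $\tilde{\Delta}_{j+1/2}^{n}:=u_{j+1}^{n}-u_{j}^{n}$ for $j\neq 0$ and $\tilde{\Delta}_{1/2}^{n}:=u_{1}^{n}-u_{0}^{n}+\lambda$. Using that $(u_{0}^{n},u_{0}^{n}-\lambda)$ and $(u_{1}^{n}+\lambda,u_{1}^{n})$ both lie in $\mathcal{G}_{\lambda}^{1}$, \eqref{eq:WBG1} allows me to split
$$
g^{-}_{\lambda}(u_{0}^{n},u_{1}^{n},v^{n})-g(u_{-1}^{n},u_{0}^{n},v^{n}) = \bigl[g^{-}_{\lambda}(u_{0}^{n},u_{1}^{n},v^{n})-g^{-}_{\lambda}(u_{0}^{n},u_{0}^{n}-\lambda,v^{n})\bigr]+\bigl[g(u_{0}^{n},u_{0}^{n},v^{n})-g(u_{-1}^{n},u_{0}^{n},v^{n})\bigr],
$$
and analogously at $j=1$, producing for every cell a Harten form $u_{j}^{n+1}=u_{j}^{n}+C_{j+1/2}^{\#}\tilde{\Delta}_{j+1/2}^{n}-D_{j-1/2}^{\#}\tilde{\Delta}_{j-1/2}^{n}$, where the coefficients $C^{\#}_{\cdot},D^{\#}_{\cdot}$ are nonnegative by \eqref{gmono} and \eqref{gpmmono} and sum to at most $2L\mu\leq 1$ at each cell. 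A short computation then shows that the additional $+\lambda$ in the definition of $\tilde{\Delta}_{1/2}^{n+1}$ recombines exactly with the $-\lambda(C_{1/2}^{\#}+D_{1/2}^{\#})$ arising when one substitutes $\Delta_{1/2}^{n}=\tilde{\Delta}_{1/2}^{n}-\lambda$, yielding
$$
\tilde{\Delta}_{j+1/2}^{n+1} = (1-C_{j+1/2}^{\#}-D_{j+1/2}^{\#})\tilde{\Delta}_{j+1/2}^{n}+C_{j+3/2}^{\#}\tilde{\Delta}_{j+3/2}^{n}+D_{j-1/2}^{\#}\tilde{\Delta}_{j-1/2}^{n}
$$
for every $j$, including $j=0$. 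The classical Harten argument then gives $\sum_{j}|\tilde{\Delta}_{j+1/2}^{n+1}|\leq\sum_{j}|\tilde{\Delta}_{j+1/2}^{n}|$, and applying $||\Delta_{1/2}|-|\tilde{\Delta}_{1/2}||\leq\lambda$ once at time $0$ and once at time $n$ produces the $2\lambda$ correction in \eqref{eq:TVDBound}.

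The main technical point I expect is the algebraic check that the specific splitting above, together with the definition of $\tilde{\Delta}_{1/2}$, leads to a genuine Harten form at cells $0$ and $1$: one must both verify that the resulting coefficients $C_{1/2}^{\#}, D_{-1/2}^{\#}, C_{3/2}^{\#}, D_{1/2}^{\#}$ are nonnegative (crucially using \eqref{gpmmono}) and observe the cancellation of the $\lambda$-shift that makes the update for $\tilde{\Delta}_{1/2}^{n+1}$ a true convex combination, which is precisely why the $\lambda$-correction does not accumulate in time and the bound stays uniform in $n$.
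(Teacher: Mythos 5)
Your proposal is correct and follows essentially the same route as the paper: the paper's proof introduces the shifted unknowns $w_{j}^{n}=u_{j}^{n}-\lambda/2$ ($j\leq 0$), $w_{j}^{n}=u_{j}^{n}+\lambda/2$ ($j\geq 1$), whose jumps are exactly your adjusted increments $\tilde{\Delta}_{j+1/2}^{n}$, and the crucial interface step — inserting the null quantity $g^{-}_{\lambda}(u_{0}^{n},u_{0}^{n}-\lambda,v^{n})-g(u_{0}^{n},u_{0}^{n},v^{n})$ via the $\mathcal{G}_{\lambda}^{1}$-consistency~\eqref{eq:WBG1} to reach a LeRoux--Harten incremental form — is identical to yours. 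Your $L^{\infty}$ argument by comparison with the stationary barrier $(M,M-\lambda)$ is just a repackaging of the paper's convex-combination bound $\min_{k}w_{k}^{0}\leq w_{j}^{n}\leq\max_{k}w_{k}^{0}$, so the two proofs coincide up to presentation.
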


\begin{proof} Due to the presence of the particle, the maximum and the total variation of the exact solution $u$ of~\eqref{eq:CauchyPb} can increase through time. For example if $u^{0}$ is constant equals to $0$ and if $v^{0}>\lambda$, then $||u(0^{+}, \cdot) ||_{L^{\infty}(\R)}=||u^{0} ||_{L^{\infty}(\R)} + \lambda $ and $||u(0^{+}, \cdot) ||_{BV(\R)}=||u^{0} ||_{BV(\R)} + 2 \lambda$ (see~\cite{LST08}, Lemma $5.7$). This prevents us for applying the LeRoux and Harten lemma (see ~\cite{H84} and~\cite{L77}) directly to $(u_{j}^{n})_{j \in \Z, \, n \in \N}$. Yet it can be applied to the sequence $(w_{j}^{n})_{j \in \Z, \, n \in \N}$ defined by
$$ w_{j}^{n} =
\begin{cases}
  u_{j}^{n} - \frac{\lambda}{2} & \text{if } j \leq 0, \\
 u_{j}^{n} + \frac{\lambda}{2} & \text{if } j \geq 1.
\end{cases}
$$
Let us prove that  there exists two families of real $(C_{j+1/2}^{n})_{j \in \Z, n \in \N}$ and $(D_{j+1/2}^{n})_{j \in \Z, n \in \N}$ such that  for all $j$ in $\Z$, for all $n$ in $\N$,
\begin{equation} \label{eq:w}
 w_{j}^{n+1}= w_{j}^{n} + C_{j+1/2}^{n}(w_{j+1}^{n}-w_{j}^{n}) - D_{j-1/2}^{n}(w_{j}^{n}-w_{j-1}^{n}), 
\end{equation}
and 
$$0 \leq 1-C_{j+1/2}^{n}- D_{j+1/2}^{n} \leq 1, \ 0 \leq C_{j+1/2}^{n} \leq 1 \ \text{ and }  0 \leq  D_{j+1/2}^{n} \leq 1. $$
In other words, $w_{j}^{n+1}$ writes as a convex combination of $w_{j-1}^{n}$, $w_{j}^{n}$ and $w_{j+1}^{n}$ and therefore, 
$$ \forall n \geq 0, \ \min_{k} w_{k}^{n} \leq w_{j}^{n+1} \leq \max_{k} w_{k}^{n}.$$
As a consequence, for all $n \in \N$ and  for $j \leq 0$,
$$ \min_{k} w_{k}^{0}+ \lambda/2 \leq u_{j}^{n} \leq \max_{k} w_{k}^{0}+ \lambda/2 $$
which rewrites
$$ \min \{ \cdots, u_{0}^{0}, u_{1}^{0}+ \lambda, \cdots  \} \leq u_{j}^{n} \leq  \max \{ \cdots, u_{0}^{0}, u_{1}^{0}+ \lambda, \cdots  \}.$$
Similarly, for all $n \in \N$ and for all $j \geq 1$,
$$ \min \{ \cdots, u_{0}^{0}-\lambda, u_{1}^{0}, \cdots  \} \leq u_{j}^{n} \leq  \max \{ \cdots, u_{0}^{0}-\lambda, u_{1}^{0}, \cdots  \},$$
hence the $L^{\infty}$ bound~\eqref{eq:LinfBound} is proven. Moreover, the LeRoux and Harten lemma yields
$$ \forall n \in \N, \  \sum_{j \in \Z} |w_{j}^{n+1}-w_{j-1}^{n+1}| \leq \sum_{j \in \Z} |w_{j}^{n}-w_{j-1}^{n}|, $$
and thus~\eqref{eq:TVDBound}.

Let us go back to the existence of $C_{j+1/2}^{n}$ and $D_{j-1/2}^{n}$.  In the sequel we denote by $|a,b|$ the interval $[\min(a,b), \max(a,b)]$. Suppose first that~\eqref{eq:w} holds for some $n \in \N$.  Then for every $j \leq -1$, there exists $\tilde{w}_{j-1/2}^{n} \in | w_{j-1}^{n}, w_{j}^{n} |$ and $\bar{w}_{j+1/2}^{n} \in | w_{j}^{n}, w_{j+1}^{n} |$
$$ 
\begin{aligned}
 w_{j}^{n+1} 	&= w_{j}^{n}- \mu \left( g(u_{j}^{n},u_{j+1}^{n}, v^{n}) - g(u_{j-1}^{n},u_{j}^{n}, v^{n}) \right) \\
 			&=w_{j}^{n}- \mu \left( g_{\lambda}\left(w_{j}^{n}+ \frac{\lambda}{2},w_{j+1}^{n}+ \frac{\lambda}{2}, v^{n} \right) - g_{\lambda}\left(w_{j-1}^{n}+ \frac{\lambda}{2},w_{j}^{n}+ \frac{\lambda}{2}, v^{n}\right) \right) \\
			&= w_{j}^{n} - \mu \left( \partial_{1}g_{\lambda}\left( \tilde{w}_{j-1/2}^{n}+ \frac{\lambda}{2}, w_{j+1}^{n}+ \frac{\lambda}{2}, v^{n}\right) (w_{j}^{n}-w_{j-1}^{n})  \right. \\
			&\left. \ \ \ \ \ \ \ \ \ \ \ \ \ \ \ \ \ \  \ \ +  \partial_{2} g_{\lambda}\left(w_{j-1}^{n}+ \frac{\lambda}{2}, \bar{w}_{j+1/2}^{n}+ \frac{\lambda}{2}, v^{n} \right)(w_{j+1}-w_{j}) \right)
\end{aligned}
$$
Both triplets $\left( \tilde{w}_{j-1/2}^{n}+ \frac{\lambda}{2}, w_{j+1}^{n}+ \frac{\lambda}{2}, v^{n}\right)$ and $\left(w_{j-1}^{n}+ \frac{\lambda}{2}, \bar{w}_{j+1/2}^{n}+ \frac{\lambda}{2}, v^{n} \right)$ belong to $[m,M]^{2} \times [\underline{v}, \bar{v}]$. The CFL condition~\eqref{eq:CFL}, and the fact that $\partial_{1} g \geq 0$ and $\partial_{2} g \leq 0$,  yield~\eqref{eq:w} with
$$ 
\begin{cases}
 D_{j-1/2}^{n} &= \mu \partial_{1}g_{\lambda}\left( \tilde{w}_{j-1/2}^{n}+ \frac{\lambda}{2}, w_{j+1}^{n}+ \frac{\lambda}{2}, v^{n}\right), \\
 C_{j+1/2}^{n} &= - \mu\partial_{2}g_{\lambda}\left(w_{j-1}^{n}+ \frac{\lambda}{2}, \bar{w}_{j+1/2}^{n}+ \frac{\lambda}{2}, v^{n} \right). 
\end{cases}
$$
The case $j \geq 2$ can be treated in the exact same way. We now turn to the trickier case $j=0$. 
The facts that $g^{-}_{\lambda}$ is consistent with $\mathcal{G}_{\lambda}^{1}$ and that $g$ is consistent (Hypothesis~\eqref{eq:WBG1} and~\eqref{gcons}) imply that
$$ g^{-}_{\lambda}\left(w_{0}^{n}+ \frac{\lambda}{2},w_{0}^{n}- \frac{\lambda}{2}, v^{n} \right)= g_{\lambda}\left(w_{0}^{n}+ \frac{\lambda}{2},w_{0}^{n}+ \frac{\lambda}{2},v^{n} \right), $$ 
which allows us to write 
$$ 
\begin{aligned}
 w_{0}^{n+1} 	&= w_{0}^{n}- \mu \left( g^{-}_{\lambda}(u_{0}^{n},u_{1}^{n}, v^{n}) - g (u_{-1}^{n},u_{0}^{n}, v^{n}) \right) \\
 			&=w_{0}^{n}- \mu \left( g^{-}_{\lambda}\left(w_{0}^{n}+ \frac{\lambda}{2},w_{1}^{n}- \frac{\lambda}{2}, v^{n} \right) - g_{\lambda}\left(w_{-1}^{n}+ \frac{\lambda}{2},w_{0}^{n}+ \frac{\lambda}{2}, v^{n}\right) \right) \\
			&= w_{0}^{n} - \mu \left(   g^{-}_{\lambda}\left(w_{0}^{n}+ \frac{\lambda}{2},w_{1}^{n}- \frac{\lambda}{2}, v^{n} \right) -   g^{-}_{\lambda}\left(w_{0}^{n}+ \frac{\lambda}{2},w_{0}^{n}- \frac{\lambda}{2}, v^{n} \right) \right. \\
			&\left. \ \ \ \ \ \ \ \ \ \ \ \ \ \ \ \ \ \  \ \ +g_{\lambda}\left(w_{0}^{n}+ \frac{\lambda}{2},w_{0}^{n}+ \frac{\lambda}{2}, v^{n} \right) -g_{\lambda}\left(w_{-1}^{n}+ \frac{\lambda}{2},w_{0}^{n}+ \frac{\lambda}{2}, v^{n}\right)\right) 
\end{aligned}
$$
Thus, there exists  $\tilde{w}_{-1/2}^{n} \in | w_{-1}^{n}, w_{0}^{n} |$ and $\bar{w}_{1/2}^{n} \in | w_{0}^{n}, w_{1}^{n} |$ such that
$$ 
\begin{aligned}
 w_{0}^{n+1} 	&= w_{0}^{n} - \mu \left(  \partial_{2} g^{-}_{\lambda}\left(w_{0}^{n}+ \frac{\lambda}{2},\bar{w}_{1/2}^{n}- \frac{\lambda}{2}, v^{n} \right) (w_{1}^{n}-w_{0}^{n}) \right. \\
			&\left. \ \ \ \ \ \ \ \ \ \ \ \ \ \ \ \ \ \  \ \ +\partial_{1} g_{\lambda}\left(\tilde{w}_{-1/2}^{n}+ \frac{\lambda}{2}, w_{0}^{n}+ \frac{\lambda}{2}, v^{n} \right) (w_{0}^{n}- w_{-1}^{n})\right)
\end{aligned}
$$
Once again, both triplets  $\left(w_{0}^{n}+ \frac{\lambda}{2},\bar{w}_{1/2}^{n}- \frac{\lambda}{2}, v^{n} \right)$ and $\left(w_{0}^{n}+ \frac{\lambda}{2},\tilde{w}_{-1/2}^{n}+ \frac{\lambda}{2}, v^{n} \right)$ belong to $[m,M]^{2} \times [\underline{v}, \bar{v}]$. The monotonicity on $g$ and $g^{-}_{\lambda}$ allow to conclude with
$$ 
\begin{cases}
 D_{-1/2}^{n} &= \mu \partial_{1} g_{\lambda}\left(\tilde{w}_{-1/2}^{n}+ \frac{\lambda}{2}, w_{0}^{n}+ \frac{\lambda}{2}, v^{n} \right),\\
 C_{1/2}^{n} &= - \mu \partial_{2} g^{-}_{\lambda}\left(w_{0}^{n}+ \frac{\lambda}{2},\bar{w}_{1/2}^{n}- \frac{\lambda}{2}, v^{n} \right).
\end{cases}
$$
The case $j=1$ can be treated in the exact same way, using the consistency assumption
$$ g^{+}_{\lambda}\left(w_{1}^{n}+ \frac{\lambda}{2},w_{1}^{n}- \frac{\lambda}{2}, v^{n} \right) = g_{\lambda}\left(w_{1}^{n}- \frac{\lambda}{2},w_{1}^{n}- \frac{\lambda}{2},v^{n} \right). $$
\end{proof}

We now turn to the case where the particle's velocity is updated from time to time, and focus on the estimates on the velocity and acceleration of the particle.
\begin{prop} \label{prop:PartBound}
Suppose that the fluxes $g^{\pm}$ verify~\eqref{eq:WBTip}, \eqref{dissipativity} and~\eqref{eq:CFL} that the time step verifies
\begin{equation}  \label{eq:MassCond}
 \frac{4L}{m_{p} } \Delta t \leq 1,
\end{equation}
Then,  the sequence $(u_{j}^{n})_{ j \in \Z, n \in \N}$ (defined by ~\eqref{eq:CoupledScheme}) verifies Estimates~\eqref{eq:LinfBound} and~\eqref{eq:TVDBound}, while $(v^{n})_{n \in \N}$ verifies the following estimates:
\begin{equation} \label{eq:vitBound}	
 \forall n \in \N, \ \ \ \underline{v} \leq v^{n} \leq \bar{v},
\end{equation}
and
\begin{equation}  \label{eq:accBound}	
 \forall n \in \N, \ \ \ \left| \frac{ v^{n+1}-v^{n}}{\Delta t} \right| \leq \frac{2L}{m_{p}} (||u^{0}||_{\infty}+ \lambda+ ||v||_{\infty}). 
\end{equation}
 The constants $\bar{v}$ and $\underline{v}$ are defined in Theorem~\ref{thm:conv}.
\end{prop}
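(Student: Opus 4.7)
The plan is to prove the four estimates jointly by induction on $n$: the $L^{\infty}$ and $BV$ bounds on $(u_{j}^{n})$ and the velocity bound on $v^{n}$ must be carried together because each scheme step needs $v^{n}$ to lie in $[\underline{v},\bar{v}]$ to apply Proposition~\ref{prop:FluidBound}, while the bound on $v^{n+1}$ needs the $L^{\infty}$ bound on $(u_{j}^{n})$. The acceleration estimate~\eqref{eq:accBound} is then a pointwise consequence. Initialization at $n=0$ is immediate from the cell-average definition of $u_{j}^{0}$ (which lies in $[\essinf u^{0},\esssup u^{0}]\subset [m,M]$) and the definition of $\underline{v},\bar{v}$.

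For the inductive step, assuming the estimates hold up to level $n$, the update of $(u_{j}^{n})$ with the frozen velocity $v^{n}\in[\underline{v},\bar{v}]$ is exactly the scheme analyzed in Proposition~\ref{prop:FluidBound}, so~\eqref{eq:LinfBound} and~\eqref{eq:TVDBound} carry over to $(u_{j}^{n+1})$. To propagate~\eqref{eq:vitBound}, I would rewrite the particle update using~\eqref{eq:WBTip} to subtract the zero quantity $(g^{-}_{\lambda}-g^{+}_{\lambda})(v^{n},v^{n},v^{n})$:
$$v^{n+1}-v^{n}=\frac{\Delta t}{m_{p}}\Bigl[(g^{-}_{\lambda}-g^{+}_{\lambda})(u_{0}^{n},u_{1}^{n},v^{n})-(g^{-}_{\lambda}-g^{+}_{\lambda})(v^{n},v^{n},v^{n})\Bigr],$$
and then perform a case split on the sign of $v^{n}-M$. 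If $v^{n}\geq M$, then $u_{0}^{n},u_{1}^{n}\leq M\leq v^{n}$, and the dissipativity hypothesis~\eqref{dissipativity} forces the bracket to be nonpositive, giving $v^{n+1}\leq v^{n}\leq\bar{v}$. If $v^{n}<M$, the dissipativity instead yields $(g^{-}_{\lambda}-g^{+}_{\lambda})(u_{0}^{n},u_{1}^{n},v^{n})\leq(g^{-}_{\lambda}-g^{+}_{\lambda})(M,M,v^{n})$; expanding the right-hand side through~\eqref{eq:WBTip} and the Lipschitz assumption~\eqref{gpmLip} bounds it by $4L(M-v^{n})$, so that
$$v^{n+1}\leq v^{n}+\frac{4L\Delta t}{m_{p}}(M-v^{n})\leq M\leq\bar{v}$$
thanks to the mass condition~\eqref{eq:MassCond}. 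The lower bound $v^{n+1}\geq\underline{v}$ follows by the mirror case split on $v^{n}-m$, using that $u_{j}^{n}\geq m$ and the same combination of~\eqref{eq:WBTip}, \eqref{dissipativity}, \eqref{gpmLip} and~\eqref{eq:MassCond}.

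The acceleration bound~\eqref{eq:accBound} is then a direct pointwise estimate: from the same decomposition and the Lipschitz hypothesis~\eqref{gpmLip} applied separately to $g^{-}_{\lambda}$ and $g^{+}_{\lambda}$ one obtains
$$\left|\frac{v^{n+1}-v^{n}}{\Delta t}\right|\leq\frac{2L}{m_{p}}\bigl(|u_{0}^{n}-v^{n}|+|u_{1}^{n}-v^{n}|\bigr),$$
which is controlled once~\eqref{eq:LinfBound} and~\eqref{eq:vitBound} are in hand, via $|u_{j}^{n}|\leq\|u^{0}\|_{\infty}+\lambda$ and $|v^{n}|\leq\|v\|_{\infty}$.

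I expect the main obstacle to be the velocity bound~\eqref{eq:vitBound}: one must pair the structural hypotheses \eqref{eq:WBTip} and \eqref{dissipativity} with the quantitative Lipschitz control~\eqref{gpmLip} to turn the update into a convex-combination-type inequality toward $[m,M]$, which is exactly where the mass condition~\eqref{eq:MassCond} enters. The remaining ingredients—applying Proposition~\ref{prop:FluidBound} for the fluid estimates and invoking Lipschitz continuity for~\eqref{eq:accBound}—are essentially book-keeping.
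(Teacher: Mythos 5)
Your proof is correct and follows essentially the same route as the paper's: a joint induction in which Proposition~\ref{prop:FluidBound} gives the fluid bounds once $v^{n}\in[\underline{v},\bar{v}]$, the null quantity $(g^{-}_{\lambda}-g^{+}_{\lambda})(v^{n},v^{n},v^{n})$ is inserted via~\eqref{eq:WBTip}, and dissipativity~\eqref{dissipativity} combined with the Lipschitz bound and the mass condition~\eqref{eq:MassCond} turns the velocity update into a convex-combination-type inequality, after which~\eqref{eq:accBound} is read off pointwise. The only difference is organizational: the paper splits on the position of $v^{n}$ relative to $u_{0}^{n}$ and $u_{1}^{n}$ and works with the integral (mean-value) form of the increment, whereas you split on $v^{n}$ versus $M$ (resp.\ $m$) and first compare with $(M,M)$ by monotonicity of $g^{-}_{\lambda}-g^{+}_{\lambda}$ --- the same ingredients in a slightly different order.
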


\begin{proof}
 We proceed by induction. Let us first remark that if the estimate~\eqref{eq:vitBound} on $v^{n}$ is fulfilled at time $t^{n}$, the proof of Proposition~\ref{prop:FluidBound} yields the $L^{\infty}$ and $BV$ estimates on $(u_{j}^{n+1})_{j \in \Z}$. Therefore, we focus on the estimate on $v^{n+1}$. Using Hypothesis~\eqref{eq:WBTip}, we introduce the null quantity $g^{-}_{\lambda}(v^{n}, v^{n}, v^{n})-g^{+}_{\lambda}(v^{n}, v^{n}, v^{n})$ and write
 $$ 
\begin{aligned}
 v^{n+1} &= v^{n} + \frac{\Delta t}{m_{p}} ( g^{-}_{\lambda}(u_{0}^{n}, u_{1}^{n}, v^{n}) - g^{+}_{\lambda}(u_{0}^{n}, u_{1}^{n}, v^{n}) ) \\
 	&=  v^{n} + \frac{\Delta t}{m_{p}} \left( \int_{0}^{1} \partial_{s} ( g^{-}_{\lambda}(v^{n} + s(u_{0}^{n}-v^{n}), v^{n} + s(u_{1}^{n}-v^{n}), v^{n}) ) ds \right. \\
	&  \ \ \ \ \ \ \ \ \ \  -\left.  \int_{0}^{1} \partial_{s} ( g^{+}_{\lambda}(v^{n} + s(u_{0}^{n}-v^{n}), v^{n} + s(u_{1}^{n}-v^{n}), v^{n})  ) ds  \right),
\end{aligned}
$$
and we obtain
\begin{equation} \label{eq:IntForm}
 \begin{aligned}
 v^{n+1} &=  v^{n} + \frac{\Delta t}{m_{p}} \left( \int_{0}^{1} (u_{0}^{n}-v^{n}) \partial_{1} (g^{-}- g^{+})(v^{n} + s(u_{0}^{n}-v^{n}), v^{n} + s(u_{1}^{n}-v^{n}), v^{n}) ds \right. \\
	&  \ \ \ \ \ \ \ \ \ \ + \left. \int_{0}^{1} (u_{1}^{n}-v^{n}) \partial_{2} (g^{-}- g^{+})(v^{n} + s(u_{0}^{n}-v^{n}), v^{n} + s(u_{1}^{n}-v^{n}), v^{n}) ds \right) 
\end{aligned}
\end{equation}
	
Suppose now that $v^{n} \leq \min(u_{0}^{n}, u_{1}^{n})$. Then both $ (u_{1}^{n}-v^{n})$ and $ (u_{0}^{n}-v^{n})$ are nonnegative. Moreover, the dissipativity assumption~\eqref{dissipativity} implies that $\partial_{1} (g^{-}- g^{+})$ and $\partial_{2} (g^{-}- g^{+})$ are also nonnegative. Hence we have $v^{n+1} \geq v^{n}$ and  Hypothesis~\eqref{eq:MassCond} yields
$$  
\begin{aligned}
  v^{n+1} & \leq v^{n} + 2L \frac{\Delta t}{m_{p}} (u_{0}^{n}-v^{n} + u_{1}^{n}-v^{n}) \\
  	& \leq \left( 1-\frac{4L \Delta t}{m_{p}} \right) v^{n} + \frac{4L \Delta t}{m_{p} } \max(u_{0}^{n}, u_{1}^{n}) \\
	& \leq \bar{v}. 
\end{aligned}
$$
We now treat the case $u_{0}^{n} \leq v^{n} \leq u_{1}^{n}$. The only difference is that $u_{0}^{n}-v^{n}$ is now negative. The integral form~\eqref{eq:IntForm} of $v^{n+1}$ and Hypothesis~\eqref{eq:vitBound} yield 
$$ \underline{v} \leq v^{n}- 2L \frac{\Delta t}{m_{p}} (v^{n}-u_{0}^{n}) \leq v^{n+1} \leq  v^{n}+ 2L \frac{\Delta t}{m_{p}} (u_{1}^{n}-v^{n}) \leq \bar{v}. $$ 
Once the $L^{\infty}$ bounds on $(u^{n}_{j})_{j \in \Z, n \in \N}$ and $(v^{n})_{n \in \N}$ are proven, the bound of the particle's acceleration~\eqref{eq:accBound} is an easy consequence of the integral form of $v^{n+1}$.
\end{proof}

\begin{rem}
Condition~\eqref{eq:MassCond} is fulfilled for small enough $\Delta t$. Thus it is not a restriction to prove the convergence of the scheme. However from the numerical point of view, one has to check Condition~\eqref{eq:MassCond} in addition to the CFL condition~\eqref{eq:CFL}. This restriction is severe if the particle is very light. It is possible, at the cost of solving a nonlinear system, to use an implicit version of Scheme~\eqref{eq:CoupledScheme} for the particle's velocity, i.e.
 \begin{equation*} 
 \begin{cases}
 u_{j}^{n+1} &= u_{j}^{n}- \mu (f_{j+1/2}^{n}(v^{n+1}) - f_{j-1/2}^{n} (v^{n+1}))  \text{ for $j \in \Z, j \notin \{0, 1\}$}, \\
 u_{0}^{n+1} &= u_{0}^{n}- \mu (f_{1/2,-}^{n}(v^{n+1}) - f_{-1/2}^{n}(v^{n+1})), \\
 u_{1}^{n+1} &= u_{1}^{n}- \mu (f_{3/2}^{n}(v^{n+1}) - f_{1/2,+}^{n}(v^{n+1})), \\
 v^{n+1} & = v^{n} + \frac{\Delta t}{m_{p}} (f_{1/2,-}^{n}(v^{n+1})-f_{1/2,+}^{n}(v^{n+1})), \\
 x_{j}^{n+1}& =x_{j}^{n}+ v^{n}\Delta t.
\end{cases}
\end{equation*}
In that case, we obtain Bounds~\eqref{eq:vitBound} and~\eqref{eq:accBound} without Constraint~\eqref{eq:MassCond} on the time step. The proof is exactly the same than the one of Proposition~\ref{prop:PartBound}. For example in the case where $v^{n+1} \leq \min(u_{0}^{n}, u_{1}^{n})$, we obtain 
$$ v^{n+1}  \leq v^{n} + 2L \frac{\Delta t}{m_{p}} (u_{0}^{n}-v^{n+1} + u_{1}^{n}-v^{n+1}), $$ and thus, without any constraint on $\Delta t$ other than~\eqref{eq:CFL},
\begin{equation*}
 v^{n+1} \leq \frac{v^{n}+ 2L \frac{\Delta t}{m_{p}}(u_{0}^{n}+u_{1}^{n})}{1+4L \frac{\Delta t}{m_{p}}} \leq \bar{v}. \qedhere
\end{equation*} 
\end{rem}

We are now in position to extract converging subsequences of $(u_{\Delta})$ and $(h_{\Delta})$ (defined in~\eqref{def:udelta} and~\eqref{def:hdelta}). In Section~\ref{S:ConvMax}, we will prove that their limits are solutions of the Cauchy problem~\eqref{eq:CauchyPb} for the fully coupled problem.
\begin{prop} \label{thm:Extraction} Assume that $u^{0}$ belongs to $BV(\R) \cap L^{1}(\R)$, and that Hypothesis~(\ref{gcons}-\ref{eq:WBG1}) and~(\ref{eq:WBTip}-\ref{dissipativity}) are verified. Moreover, suppose that the CFL condition~\eqref{eq:CFL} holds. Then there exists $u$ in $BV_{loc}(\R_{+} \times \R)$ and $h$ in $W^{2,\infty}_{loc}(\R_{+})$ such that, up to a subsequence, the sequence $(u_{\Delta t})_{} $ converges in $L^{1}_{loc}(\R_{+} \times \R)$ toward $u$ and the sequence $(h_{\Delta t})_{} $ converges in $W^{1, \infty}_{loc}(\R)$ toward $h$ as $\Delta t$ tends to $0$. 
\end{prop}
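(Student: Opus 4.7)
The plan is to combine the a priori bounds of Propositions~\ref{prop:FluidBound} and~\ref{prop:PartBound} with two compactness arguments: Arzel\`a--Ascoli for $h_{\Delta t}$, and Helly's selection theorem for the fluid after straightening the mesh. Concretely, I would introduce
$$\tilde u_{\Delta t}(t,y) := u_{\Delta t}\bigl(t,\, y+h_{\Delta t}(t)-h^{0}\bigr),$$
which is piecewise constant on the \emph{fixed} rectangular grid $[n\Delta t,(n+1)\Delta t)\times[x_{j-1/2}^{0},x_{j+1/2}^{0})$ with value $u_{j}^{n}$, extract a convergent subsequence of $\tilde u_{\Delta t}$, and then undo the change of variable using the uniform convergence of $h_{\Delta t}$.

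For the particle's trajectory, let $\hat v_{\Delta t}$ be the continuous piecewise affine interpolant of $(v^{n})_{n}$. Bound~\eqref{eq:accBound} endows $\hat v_{\Delta t}$ with a uniform Lipschitz constant while~\eqref{eq:vitBound} gives a uniform $L^{\infty}$ bound, so Arzel\`a--Ascoli yields a subsequence along which $\hat v_{\Delta t}\to v$ uniformly on compacts, with $v$ Lipschitz. Since $\|v_{\Delta t}-\hat v_{\Delta t}\|_{\infty}\leq\max_{n}|v^{n+1}-v^{n}|=O(\Delta t)$, the piecewise constant $v_{\Delta t}$ also converges uniformly to $v$. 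Integrating in time, $h_{\Delta t}\to h$ uniformly on compacts with $h(t)=h^{0}+\int_{0}^{t}v(s)\,ds$, whence $h\in W^{2,\infty}_{loc}(\R_{+})$ and $h_{\Delta t}\to h$ in $W^{1,\infty}_{loc}(\R_{+})$.

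For the fluid, the space variation of $\tilde u_{\Delta t}(t,\cdot)$ equals $\sum_{j}|u_{j}^{n}-u_{j-1}^{n}|$ and is controlled by~\eqref{eq:TVDBound}, while $\|\tilde u_{\Delta t}\|_{\infty}$ is controlled by~\eqref{eq:LinfBound}. For the time variation I would use $u_{j}^{n+1}-u_{j}^{n}=-\mu(f_{j+1/2}^{n}-f_{j-1/2}^{n})$: away from the particle, the consistency relation $g(a,a,v)=f_{v}(a)$ combined with the Lipschitz bound~\eqref{gLip} gives $|f_{j+1/2}^{n}-f_{j-1/2}^{n}|\leq L\bigl(|u_{j+1}^{n}-u_{j}^{n}|+|u_{j}^{n}-u_{j-1}^{n}|\bigr)$ for $j\notin\{0,1\}$; at the two cells $j\in\{0,1\}$ the fluxes $g_{\lambda}^{\pm}$ are only controlled through the $L^{\infty}$ bounds~\eqref{eq:LinfBound} and~\eqref{eq:vitBound}, so their contribution is at most $O(\mu)$. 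Summing over $j$ and multiplying by $\Delta x$ gives $\sum_{j}|u_{j}^{n+1}-u_{j}^{n}|\,\Delta x\leq C\Delta t$, and a further sum over $n\leq T/\Delta t$ produces a uniform bound for the total variation of $\tilde u_{\Delta t}$ on every slab $[0,T]\times K$. The compact embedding $BV([0,T]\times K)\hookrightarrow L^{1}([0,T]\times K)$ then supplies a subsequence with $\tilde u_{\Delta t}\to\tilde u$ in $L^{1}_{loc}(\R_{+}\times\R)$ and $\tilde u\in BV_{loc}$.

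To conclude I set $u(t,x):=\tilde u(t,x-h(t)+h^{0})$, which inherits $BV_{loc}$-regularity, and write
$$\iint_{[0,T]\times K}|u_{\Delta t}-u|\,dx\,dt \;\leq\; I_{1}+I_{2},$$
where $I_{1}$ compares the two translates of $\tilde u_{\Delta t}$ by $h_{\Delta t}$ and $h$, so that $I_{1}\leq T\,\|h_{\Delta t}-h\|_{\infty,[0,T]}$ times the uniform space-$BV$ bound on $\tilde u_{\Delta t}$ (classical $L^{1}$ modulus-of-continuity estimate for $BV$ functions), and $I_{2}=\iint|\tilde u_{\Delta t}-\tilde u|\,dy\,dt$ over a fixed compact, which vanishes by the previous step. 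The main delicate point in this plan is the time-$BV$ estimate at the two particle-adjacent cells $j\in\{0,1\}$, where consistency of $g_{\lambda}^{\pm}$ with $f_{v}$ is unavailable away from $\mathcal{G}_{\lambda}^{1}$ and only the crude $L^{\infty}$ bound is at our disposal; since only two cells per time step are concerned, their joint contribution stays $O(\Delta t)$ and the standard Helly compactness argument goes through.
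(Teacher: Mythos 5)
Your proposal is correct and follows essentially the same route as the paper: the $L^\infty$/$BV$ bounds of Propositions~\ref{prop:FluidBound} and~\ref{prop:PartBound}, Helly's theorem for the fluid, Arzel\`a--Ascoli for the particle, and a diagonal extraction to cover all of $\R_{+}\times\R$. The only difference is one of detail: the paper invokes Helly in a single sentence, whereas you explicitly supply the time-$BV$ estimate (via the scheme's update formula, the Lipschitz bounds on the fluxes, and the $O(\Delta t)$ contribution of the two particle-adjacent cells) and the mesh-straightening change of variables needed to make that invocation rigorous on the moving grid --- both of which are implicit in, and consistent with, the paper's argument.
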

\begin{proof} Let us first fix a time $T>0$ and a constant $A>0$ and prove the convergence in $L^{1}([0,T] \times [-A,A])$ and $W^{1, \infty}([0,T]) $. By Proposition~\ref{prop:FluidBound}, we can use Helly's theorem to prove the convergence in $L^{1}([0,T] \times [-A,A])$ of $(u_{\Delta t})_{}$, toward a function $u$ in $BV(([0,T] \times [-A,A]))$. Similarly Proposition~\ref{prop:PartBound} allows us to apply Arzelà-Ascoli's theorem to prove convergence in $W^{1, \infty}([0,T])$ of $(h_{\Delta t})_{}$ to a function $h$ belonging to $W^{2, \infty}([0,T])$. The result is extended to the whole time-space $\R_{+} \times \R$ thanks to the Cantor diagonal extraction argument.
\end{proof}
\begin{rem} \label{rem:convc}	
Up to the same subsequence, $(v_{\Delta t})_{}$ converges toward $h'$ in $L^{1}_{loc}$. Moreover
 the sequence of functions $(c_{\Delta t})_{}$ defined by
$$
c_{\Delta t} (t,x)= 
\begin{cases}
 c_{-} & \text{ if } t< h_{\Delta x}(t), \\
 c_{+} & \text{ if } t> h_{\Delta x}(t), 
\end{cases}
$$
converges in $L^{1}_{loc}$ toward
$$
c (t,x)= 
\begin{cases}
 c_{-} & \text{ if } t< h(t),\\
 c_{+} & \text{ if } t> h(t).
\end{cases}
$$
Indeed, we have
 $$ \int_{-A}^{A} \int_{0}^{T} |c_{\Delta t} (t,x)-c(t,x)| dt dx \leq |c_{+}-c_{-}| \int_{0}^{T} |h_{\Delta t}(t) - h(t) | dt \leq 2 L T \Delta t.$$
\end{rem}

\section{Convergence of schemes consistent with a maximal part of the germ} \label{S:ConvMax}
For now on, we suppose that all the hypotheses of Proposition~\ref{thm:Extraction}  are fulfilled, and that both Conditions~\eqref{eq:CFL} and~\eqref{eq:MassCond} are verified. The aim of this section is to prove Theorem~\ref{thm:conv}. To that purpose, we prove that under Condition~\eqref{eq:WBMax}, which states that the fluxes $g^{\pm}_{\lambda}$ around the particle are consistent with a maximal subset $\mathcal{H}_{\lambda}$ of the germ (see Definition~\ref{def:MaxSub}), the limit $(u,h)$ of the scheme is the solution of~\eqref{eq:CauchyPb}. 

The fact that the Cauchy problem~\eqref{eq:CauchyPb} is well posed in $BV(\R)$ is proven in~\cite{ALST13}. Once we know that Scheme~\eqref{eq:CoupledScheme} converges toward a solution of~\eqref{eq:CauchyPb}, the uniqueness of the solution yields that the whole sequence $(u_{\Delta t}, h_{\Delta t})$ converges. Theorem~\ref{thm:conv} gives a different way to prove the existence of a solution (but not the uniqueness).

\subsection{Convergence of the fluid's part}
The aim of this subsection is to prove that the limit $u$ of $(u_{\Delta t})$ verifies~\eqref{eq:DefFluid1}. We prove in Proposition~\ref{prop:EI2} that $(u_{j}^{n})_{j \in \Z, n \in \N}$ verifies a discrete version of~\eqref{eq:DefFluid1}. In the sequel, for all reals number $a$ and $b$ we denote by
$$ a \top b= \max(a,b) \ \text{ and by } \ a \bot b= \min(a,b). $$ 
In the following proposition, we establish a discrete entropy inequality.
\begin{prop} \label{prop:EntIneq2Prel} 
Assume that Hypothesis~(\ref{gcons}-\ref{dissipativity}) hold (included~\eqref{eq:WBMax}) and that the CFL condition~\eqref{eq:CFL} is fulfilled. Then for all $(c_{-}, c_{+})$ in $\R^{2}$, there exists  a constant $A$, depending only on $\lambda$, $||u^{0}||_{\infty}$, $|| v ||_{\infty}$ and $(c_{-},c_{+})$, such that for all $j\in \Z$, for all $n \in \N$, the following inequality holds:
 \begin{equation} \label{eq:DIE}
 \frac{|u_{j}^{n+1}- c_{j}|-|u_{j}^{n}- c_{j}|}{\Delta t} + \frac{G_{j+1/2,-}^{n}-G_{j-1/2,+}^{n}}{\Delta x} \leq \eps_{j} \frac{A}{\Delta x} \dist_{1}((c_{-},c_{+}), \mathcal{H}_{\lambda}(v^{n})),
\end{equation}
where 
$$ \forall j \neq 0, \ G_{j+1/2,-}^{n}= G_{j+1/2,+}^{n}= G_{j+1/2}^{n}, $$
with
$$G_{j+1/2}^{n}= g(u_{j}^{n} \top c_{j}, u_{j+1}^{n} \top c_{j+1}, v^{n})- g(u_{j}^{n} \bot c_{j}, u_{j+1}^{n} \bot c_{j+1}, v^{n}), $$
$$ G_{1/2, \pm}^{n }= g^{\pm}_{\lambda}(u_{0}^{n} \top c_{0}, u_{1}^{n} \top c_{1}, v^{n})- g^{\pm}_{\lambda}(u_{0}^{n} \bot c_{0}, u_{1}^{n} \bot c_{1}, v^{n}), $$
$$ c_{j}= 
\begin{cases}
 c_{-} & \text{ if } j \leq 0, \\
 c_{+} & \text{ if } j \geq 1,
\end{cases}
\ \ \ \text{and} \ \ \
\eps_{j}= 
\begin{cases}
 1 & \text{ if } j \in \{ 0, 1 \}, \\
 0 & \text{ otherwise. } 
\end{cases}
$$
\end{prop}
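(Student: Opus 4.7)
The strategy is the classical Crandall--Majda / Kruzhkov doubling-of-variables argument for monotone finite volume schemes, combined with an $L^{1}$-projection of $(c_{-},c_{+})$ onto $\mathcal{H}_{\lambda}(v^{n})$ to reduce the general case to one where the constant state is exactly preserved by the scheme at the particle interface.

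For the regular cells $j \notin \{0,1\}$, write $u_{j}^{n+1} = H_{j}(u_{j-1}^{n},u_{j}^{n},u_{j+1}^{n};v^{n}) := u_{j}^{n} - \mu\bigl(g(u_{j}^{n},u_{j+1}^{n},v^{n}) - g(u_{j-1}^{n},u_{j}^{n},v^{n})\bigr)$. This is nondecreasing in each of its three arguments: \eqref{gmono} handles the two off-diagonal terms, and the diagonal estimate $1 - \mu(\partial_{1} g - \partial_{2} g) \geq 1 - 2L\mu \geq 0$ follows from \eqref{eq:CFL}. Since $c_{j-1}=c_{j}=c_{j+1}$ away from the particle, \eqref{gcons} gives $H_{j}(c_{j},c_{j},c_{j};v^{n})=c_{j}$. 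Monotonicity then yields the pair
$$u_{j}^{n+1}\top c_{j} \leq H_{j}(u_{j-1}^{n}\top c_{j-1},\, u_{j}^{n}\top c_{j},\, u_{j+1}^{n}\top c_{j+1};v^{n}), \quad u_{j}^{n+1}\bot c_{j} \geq H_{j}(u_{j-1}^{n}\bot c_{j-1},\, u_{j}^{n}\bot c_{j},\, u_{j+1}^{n}\bot c_{j+1};v^{n}),$$
and subtracting, while recognizing $G_{j\pm 1/2}^{n}$ in the flux differences, produces \eqref{eq:DIE} with zero right-hand side (matching $\eps_{j}=0$).

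For cells $j \in \{0,1\}$ the proof runs in two stages. First, assume $(c_{-},c_{+})\in\mathcal{H}_{\lambda}(v^{n})$: then \eqref{eq:WBMax} and \eqref{gcons} give $g^{-}_{\lambda}(c_{-},c_{+},v^{n}) = g(c_{-},c_{-},v^{n})$ and symmetrically on the right, so applying the scheme to the piecewise-constant state $c$ leaves cells $0$ and $1$ unchanged. The corresponding update operators are again monotone in all three arguments (using \eqref{gpmmono} on the particle-side flux and \eqref{gmono} on the other, with the same CFL-based diagonal estimate), and the Crandall--Majda manipulation produces \eqref{eq:DIE} with the special fluxes $G_{1/2,\pm}^{n}$ and zero right-hand side. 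Second, for a general $(c_{-},c_{+})$, pick an $L^{1}$-projection $(\tilde c_{-}, \tilde c_{+}) \in \mathcal{H}_{\lambda}(v^{n})$ and apply the previous step to $\tilde c$. Swapping $\tilde c$ back to $c$ costs entropy-difference terms bounded by $\bigl||u-c|-|u-\tilde c|\bigr| \leq |c-\tilde c|$, and flux-difference terms bounded by the local Lipschitz constants from \eqref{gLip}, \eqref{gpmLip} times $|c-\tilde c|_{1} = \dist_{1}((c_{-},c_{+}),\mathcal{H}_{\lambda}(v^{n}))$. After division by $\Delta t$ and $\Delta x$, with $\mu=\Delta t/\Delta x$ fixed, these contributions assemble into the announced $A/\Delta x \cdot \dist_{1}(\,\cdot\,,\mathcal{H}_{\lambda}(v^{n}))$ bound on the right-hand side.

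The main subtlety is bookkeeping the constant $A$: it must depend only on $\lambda$, $\|u^{0}\|_{\infty}$, $\|v\|_{\infty}$ and $(c_{-},c_{+})$, not on $v^{n}$ or $n$. Since $\mathcal{H}_{\lambda}(v)$ is a translate of $\mathcal{H}_{\lambda}(0)$ by $(v,v)$ and $v^{n} \in [\underline v,\bar v]$, the projection $(\tilde c_{-},\tilde c_{+})$ of the fixed point $(c_{-},c_{+})$ stays in a set bounded uniformly in $n$, and the Lipschitz constants of $g$ and $g^{\pm}_{\lambda}$ only ever need to be evaluated on the uniform compact box $[m,M]^{2}\times[\underline v,\bar v]$ furnished by the a priori bounds \eqref{eq:LinfBound} and \eqref{eq:vitBound}.
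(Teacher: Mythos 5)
Your proposal is correct and follows essentially the same route as the paper: the standard Crandall--Majda monotonicity argument with $c_j = H(c_{j-1},c_j,c_j,v^n)$ away from the particle, and at the interface cells an $L^1$-projection of $(c_-,c_+)$ onto $\mathcal{H}_\lambda(v^n)$ (where \eqref{eq:WBMax} makes the projected state a fixed point of the interface update), with the swap back to $c$ paid for by $|c-\tilde c|\le\dist_1$ in the time-difference term and by local Lipschitz constants of $g$, $g^\pm_\lambda$ on a compact box uniform in $n$ in the flux terms, yielding $A$ of the form (Lipschitz constant) $+\,2/\mu$. No gaps.
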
 

\begin{proof}
We follow the guidelines of proofs of classical entropy inequalities. They rely on the identity
$$ |u_{j}^{n+1}-c_{j}|= u_{j}^{n+1} \top c_{j} - u_{j}^{n+1} \bot c_{j}. $$
For $j \in \Z \setminus \{0,1\}$, we use the condensed notation $u_{j}^{n+1}= H(u_{j-1}^{n}, u_{j}^{n}, u_{j+1}^{n}, v^{n})$. Hypothesis~\eqref{gmono} on the monotonicity of the fluxes and the CFL condition~\eqref{eq:CFL} ensure that for every $v$, $H$ is increasing with respect to its first three arguments. Moreover if $j \in \Z \setminus \{0,1\}$, $c_{j-1}=c_{j}=c_{j+1}$ and we use the consistency of the flux away from the particle~\eqref{gcons} to write $c_{j}= H(c_{j-1},c_{j},c_{j+1}, v^{n})$. It follows that
$$ 
\begin{aligned}
 u_{j}^{n+1} \top c_{j} 	&= H(u_{j-1}^{n}, u_{j}^{n}, u_{j+1}^{n}, v^{n}) \top H(c_{j-1}, c_{j}, c_{j+1}, v^{n}) \\
 					&\leq H(u_{j-1}^{n} \top c_{j-1}, u_{j}^{n}\top c_{j}, u_{j+1}^{n}\top c_{j+1}, v^{n}) \\
 u_{j}^{n+1} \bot c_{j} 	&= H(u_{j-1}^{n}, u_{j}^{n}, u_{j+1}^{n}, v^{n}) \bot H(c_{j-1}, c_{j}, c_{j+1}, v^{n}) \\
 					&\geq H(u_{j-1}^{n} \bot c_{j-1}, u_{j}^{n}\bot c_{j}, u_{j+1}^{n}\bot c_{j+1}, v^{n}) 
\end{aligned}
$$
and that
$$ 
\begin{aligned}
 |u_{j}^{n+1}-c_{j}| 	& \leq H(u_{j-1}^{n} \top c_{j-1}, u_{j}^{n}\top c_{j}, u_{j+1}^{n}\top c_{j+1}, v^{n}) - H(u_{j-1}^{n} \bot c_{j-1}, u_{j}^{n}\bot c_{j}, u_{j+1}^{n}\bot c_{j+1}, v^{n}) \\
 				& \leq u_{j}^{n}\top c_{j} - u_{j}^{n}\bot c_{j} -\mu ( G_{j+1/2}^{n}- G_{j-1/2}^{n} ) \\
				& \leq  |u_{j}^{n}-c_{j}| -\mu ( G_{j+1/2}^{n}- G_{j-1/2}^{n} )
\end{aligned}
$$
Let us now focus on the more complicated case $j=0$ (the case $j=1$ can be treated in the exact same way).  We denote by $(\tilde{c}_{0}^{n}, \tilde{c}_{1}^{n})$ a projection of $(c_{-}, c_{+})=(c_{0}^{n}, c_{1}^{n})$ on $\mathcal{H}_{\lambda}(v^{n})$ for the $L^{1}$-norm, and by $(\tilde{c}_{j}^{n})_{j \in \Z, n \in \N}$ and $(\tilde{G}_{j+1/2}^{n})_{j \in \Z, n \in \N}$ the analogues of $(c_{j})_{j \in \Z}$ and $(G_{j+1/2}^{n})_{j \in \Z, n \in \N}$ constructed with $\tilde{c}$:
$$ \forall j \neq 0, \ \tilde{G}_{j+1/2,-}^{n}= \tilde{G}_{j+1/2,+}^{n}=\tilde{G}_{j+1/2}^{n}= g(u_{j}^{n} \top \tilde{c}_{j}, u_{j+1}^{n} \top \tilde{c}_{j+1}, v^{n})- g(u_{j}^{n} \bot \tilde{c}_{j}, u_{j+1}^{n} \bot \tilde{c}_{j+1}, v^{n}), $$
$$ \tilde{G}_{1/2, \pm}^{n }= g^{\pm}_{\lambda}(u_{0}^{n} \top \tilde{c}_{0}, u_{1}^{n} \top \tilde{c}_{1}, v^{n})- g^{\pm}_{\lambda}(u_{0}^{n} \bot \tilde{c}_{0}, u_{1}^{n} \bot \tilde{c}_{1}, v^{n}). $$
Let us first remark that
$$ 
\begin{aligned}
 |u_{0}^{n+1}- c_{0}|-|u_{0}^{n}- c_{0}|	& \leq |u_{0}^{n+1}- \tilde{c}_{0}^{n}|+ | \tilde{c}_{0}^{n} - c_{0} |  -  \big| |u_{0}^{n}- \tilde{c}_{-}^{n}| - |  \tilde{c}_{0}^{n}-c_{0}|  \big| \\
 						& \leq |u_{0}^{n+1}- \tilde{c}_{0}^{n}| - |u_{0}^{n}- \tilde{c}_{-}^{n}| + 2 |  \tilde{c}_{0}^{n}-c_{0}| .
\end{aligned}
$$
Thus we have
$$ 
\begin{aligned}
& \frac{|u_{0}^{n+1}- c_{0}|-|u_{0}^{n}- c_{0}|}{\Delta t} + \frac{G_{1/2,-}^{n}-G_{-1/2}^{n}}{\Delta x} \\
& \ \ \ \ \ \ \ \ \ \ \ \ \leq \frac{|u_{0}^{n+1}- \tilde{c}_{0}^{n}|-|u_{0}^{n}- \tilde{c}_{0}^{n}|}{\Delta t} + \frac{G_{1/2,-}^{n}-G_{-1/2}^{n}}{\Delta x} + \frac{2}{\Delta t} \dist_{1}((c_{-},c_{+}), \mathcal{H}_{\lambda}(v^{n})) \\
& \ \ \ \ \ \ \ \ \ \ \ \ \leq \frac{G_{1/2,-}^{n}-G_{-1/2}^{n}}{\Delta x} - \frac{\tilde{G}_{1/2,-}^{n}-\tilde{G}_{-1/2}^{n}}{\Delta x} + \frac{2}{\Delta t} \dist_{1}((c_{-},c_{+}), \mathcal{H}_{\lambda}(v^{n}) ).
\end{aligned}
$$
Indeed, as $(\tilde{c}_{0}^{n}, \tilde{c}_{1}^{n})$ belongs to $\mathcal{H}_{\lambda}(v^{n})$, Hypothesis~\eqref{eq:WBMax} yields that $\tilde{c}_{0}^{n}= H_{\lambda}( \tilde{c}_{-1}^{n}, \tilde{c}_{0}^{n}, \tilde{c}_{1}^{n}, v^{n})$, and we obtain as before
$$ 
 |u_{0}^{n+1}-\tilde{c}_{j}| 	 \leq  |u_{j}^{n}-\tilde{c}_{j}| -\mu ( \tilde{G}_{j+1/2}^{n}- \tilde{G}_{j-1/2}^{n} ).
$$
We now attempt to bound
$$ 
\begin{aligned}
 G_{1/2,-}^{n}- \tilde{G}_{1/2,-}^{n} = g^{-}_{\lambda} &(u_{0}^{n} \top c_{0}, u_{1}^{n} \top c_{1}, v^{n}) - g^{-}_{\lambda}(u_{0}^{n} \bot c_{0}, u_{1}^{n} \bot c_{1}, v^{n}) \\
 			& - g^{-}_{\lambda}(u_{0}^{n} \top \tilde{c}_{0}^{n}, u_{1}^{n} \top \tilde{c}_{1}^{n}, v^{n}) + g^{-}_{\lambda}(u_{0}^{n} \bot \tilde{c}_{0}^{n}, u_{1}^{n} \bot \tilde{c}_{1}^{n}, v^{n}).
\end{aligned}
$$
As $(v^{n})_{n \in \Z}$ is bounded (Proposition~\ref{prop:PartBound}), the maximum and minimum over $n$ of $\tilde{c}_{\pm}^{n}$  is a bounded function of $(c_{-},c_{+})$ and $||v||_{\infty}$. Thus the set
$$[\min(m,c_{-},c_{+}, \tilde{c}_{-}^{n}, \tilde{c}_{+}^{n}), \max(M,c_{-},c_{+}, \tilde{c}_{-}^{n}, \tilde{c}_{+}^{n})]^{2} \times [\underline{v}, \bar{v}]. $$
is compact. Therefore, with $L_{c}$  the Lipschitz constant of $g^{-}_{\lambda}$ over this set, we have
$$ 
\begin{aligned}
 |  g^{-}_{\lambda} &(u_{0}^{n} \top c_{0}, u_{1}^{n} \top c_{1}, v^{n}) - g^{-}_{\lambda}(u_{0}^{n} \top \tilde{c}_{0}^{n}, u_{1}^{n} \top \tilde{c}_{1}^{n}, v^{n}) | \\
 	& \leq |  g^{-}_{\lambda}(u_{0}^{n} \top c_{0}, u_{1}^{n} \top c_{1}, v^{n}) - g^{-}_{\lambda}(u_{0}^{n} \top \tilde{c}_{0}^{n}, u_{1}^{n} \top c_{1}, v^{n}) |  \\
	& \qquad +  |  g^{-}_{\lambda}(u_{0}^{n} \top \tilde{c}_{0}^{n}, u_{1}^{n} \top c_{1}, v^{n}) - g^{-}_{\lambda}(u_{0}^{n} \top \tilde{c}_{0}^{n}, u_{1}^{n} \top \tilde{c}_{1}^{n}, v^{n}) |  \\
	&\leq L_{c} \dist_{1}((c_{-},c_{+}), \mathcal{H}_{\lambda}(v^{n})),
\end{aligned}
$$
and similarly
$$ |  g^{-}_{\lambda}(u_{0}^{n} \bot c_{0}, u_{1}^{n} \bot c_{1}, v^{n}) - g^{-}_{\lambda}(u_{0}^{n} \top \tilde{c}_{0}^{n}, u_{1}^{n} \top \tilde{c}_{1}^{n}, v^{n}) | \leq L_{c} \dist_{1}((c_{-},c_{+}), \mathcal{H}_{\lambda}(v^{n})), $$
which concludes the proof with $ A= 2 L_{c} + \frac{2 \Delta x}{\Delta t}$.
\end{proof}

We are now in position to obtain a discrete version of~\eqref{eq:DefFluid1}.
\begin{prop} \label{prop:EI2}
 Let $(\varphi_{j}^{n})_{j \in \Z, n \in \N}$ be a compactly supported sequence of nonnegative reals. If~\eqref{eq:DIE} holds for all $n$ in $\N$ and $j$ in $\Z$, then
 \begin{equation} \label{eq:EI2}
 \begin{aligned}
 \Delta t &\Delta x \sum_{j \in \Z, n \in \N} |u_{j}^{n+1}-c_{j}| \frac{\varphi_{j}^{n+1}-\varphi_{j}^{n}}{\Delta t} + \Delta x \sum_{i \in \Z} |u_{j}^{0}-c_{j}| \varphi_{j}^{0} + \Delta t \Delta x \sum_{j \in \Z^{*}, n \in \N} G_{j+1/2}^{n} \frac{\varphi_{j+1}^{n}-\varphi_{j}^{n}}{\Delta x}\\
 	&  + \Delta t \Delta x \sum_{ n \in \N} G_{j+1/2,+}^{n} \frac{\varphi_{1}^{n}-\varphi_{0}^{n}}{\Delta x} \geq -A \Delta t \sum_{n \in \N} \dist_{1}(c, \mathcal{H}_{\lambda}(v^{n})) (\varphi^{n}_{0}+\varphi^{n}_{1}).
\end{aligned}
\end{equation}
\end{prop}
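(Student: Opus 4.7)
The plan is a standard discrete integration by parts (Abel summation) applied to the cell-by-cell inequality \eqref{eq:DIE}. First I multiply \eqref{eq:DIE} by the nonnegative weight $\Delta t\,\Delta x\,\varphi_j^n$; since $\varphi_j^n\geq 0$, the inequality is preserved, and I sum over all $j\in\mathbb{Z}$ and $n\in\mathbb{N}$. The compact support of $(\varphi_j^n)$ makes all sums finite, so the rearrangements below are legitimate.

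Next, I perform Abel summation in time on the discrete time-derivative term, using the identity
$$\sum_{n\geq 0}(a^{n+1}-a^n)\varphi^n = -\sum_{n\geq 0} a^{n+1}(\varphi^{n+1}-\varphi^n) - a^0\varphi^0,$$
valid because $\varphi^n$ vanishes for $n$ large. Applied with $a^n=|u_j^n-c_j|$ and summed over $j$, this produces the first two terms on the left-hand side of \eqref{eq:EI2}.

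Then I treat the spatial flux term. The sum $\sum_j\varphi_j^n\bigl(G_{j+1/2,-}^n - G_{j-1/2,+}^n\bigr)$ is almost telescoping: reindexing the second sum by $j\mapsto j+1$ yields $\sum_j\bigl(\varphi_j^n G_{j+1/2,-}^n - \varphi_{j+1}^n G_{j+1/2,+}^n\bigr)$. For every $j\neq 0$, Proposition~\ref{prop:EntIneq2Prel} gives $G_{j+1/2,-}^n = G_{j+1/2,+}^n = G_{j+1/2}^n$, so the generic contribution is $G_{j+1/2}^n(\varphi_j^n-\varphi_{j+1}^n)$, accounting for the third term of \eqref{eq:EI2}. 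At the particle interface $1/2$ the two one-sided fluxes differ, and the leftover contribution $\varphi_0^n G_{1/2,-}^n - \varphi_1^n G_{1/2,+}^n$ is split as
$$-G_{1/2,+}^n(\varphi_1^n-\varphi_0^n) + \varphi_0^n\bigl(G_{1/2,-}^n - G_{1/2,+}^n\bigr),$$
the first piece giving the fourth term of \eqref{eq:EI2}. The residual jump $G_{1/2,-}^n-G_{1/2,+}^n$ is a difference of $g^\pm_\lambda$-values evaluated at $\top/\bot$ expressions involving $(c_-,c_+)$; by the very Lipschitz estimate used in the proof of Proposition~\ref{prop:EntIneq2Prel}, it is controlled by $\mathrm{dist}_1((c_-,c_+),\mathcal{H}_\lambda(v^n))$ up to a constant, and can therefore be absorbed into the constant $A$ on the right-hand side.

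Finally, on the right-hand side of \eqref{eq:DIE}, the indicator $\eps_j$ vanishes except at $j\in\{0,1\}$, which directly produces the factor $(\varphi_0^n+\varphi_1^n)$ multiplying $\mathrm{dist}_1((c_-,c_+),\mathcal{H}_\lambda(v^n))$. The only real subtlety, and the main bookkeeping obstacle, is the interface step: reconciling the interior Abel-summation form (which naturally produces $G_{1/2,-}^n$ on one side and $G_{1/2,+}^n$ on the other) with the form displayed in \eqref{eq:EI2}, where only $G_{1/2,+}^n$ appears. This is done by adding and subtracting $G_{1/2,+}^n$ and using the Lipschitz bound just described, after which the rearranged inequality is exactly \eqref{eq:EI2}.
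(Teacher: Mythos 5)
Your setup (multiplying \eqref{eq:DIE} by $\Delta t\,\Delta x\,\varphi_j^n$, Abel summation in time, reindexing the flux sum and isolating the interface leftover $\varphi_0^n G_{1/2,-}^n-\varphi_1^n G_{1/2,+}^n=-G_{1/2,+}^n(\varphi_1^n-\varphi_0^n)+\varphi_0^n(G_{1/2,-}^n-G_{1/2,+}^n)$) coincides with the paper's computation. However, your treatment of the residual jump $G_{1/2,-}^{n}-G_{1/2,+}^{n}$ contains a genuine gap: this quantity is \emph{not} controlled by $\dist_{1}((c_{-},c_{+}),\mathcal{H}_{\lambda}(v^{n}))$. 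The Lipschitz estimate in the proof of Proposition~\ref{prop:EntIneq2Prel} compares $G_{1/2,\pm}^{n}$ with $\tilde{G}_{1/2,\pm}^{n}$, i.e.\ the \emph{same} one-sided flux evaluated at $c$ versus at its projection $\tilde{c}$; that difference is indeed $O(\dist_{1})$. What you need to control here is instead the gap between the \emph{two different} fluxes $g^{-}_{\lambda}$ and $g^{+}_{\lambda}$ at the \emph{same} arguments, namely
\begin{equation*}
G_{1/2,-}^{n}-G_{1/2,+}^{n}=(g^{-}_{\lambda}-g^{+}_{\lambda})(u_{0}^{n}\top c_{0},u_{1}^{n}\top c_{1},v^{n})-(g^{-}_{\lambda}-g^{+}_{\lambda})(u_{0}^{n}\bot c_{0},u_{1}^{n}\bot c_{1},v^{n}),
\end{equation*}
which is generically an $O(1)$ quantity even when $(c_{-},c_{+})\in\mathcal{H}_{\lambda}(v^{n})$. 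For instance take $c_{-}=c_{+}=v^{n}$ (which lies in $\mathcal{G}_{\lambda}^{2}(v^{n})$, so $\dist_{1}=0$) and $u_{0}^{n},u_{1}^{n}>v^{n}$: by \eqref{eq:WBTip} the residual reduces to $(g^{-}_{\lambda}-g^{+}_{\lambda})(u_{0}^{n},u_{1}^{n},v^{n})$, which is precisely the (nonzero) discrete drag force driving the particle. So no constant $A$ can absorb this term into the right-hand side.

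The correct mechanism, which is the actual content of the paper's Lemma~\ref{lemma:GermDiss}, is a \emph{sign} argument rather than a smallness argument: the dissipativity hypothesis \eqref{dissipativity} states that $g^{-}_{\lambda}-g^{+}_{\lambda}$ is nondecreasing in its first two arguments, and since $u_{0}^{n}\top c_{0}\geq u_{0}^{n}\bot c_{0}$ and $u_{1}^{n}\top c_{1}\geq u_{1}^{n}\bot c_{1}$, one gets $G_{1/2,-}^{n}-G_{1/2,+}^{n}\geq 0$. Because this term multiplies $\varphi_{0}^{n}\geq 0$ and enters the final inequality with the favorable sign (the left-hand side of \eqref{eq:EI2} equals the summed quantity \emph{plus} $\Delta t\sum_{n}\varphi_{0}^{n}(G_{1/2,-}^{n}-G_{1/2,+}^{n})$), it can simply be discarded. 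You should replace your Lipschitz-absorption step by this monotonicity argument; the rest of your proof then goes through and matches the paper's.
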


\begin{proof}
Classically, the starting point is to multiply Equation~\eqref{eq:DIE} by $\varphi_{j}^{n}$ and to sum over $j \in \Z$ and $n \in \N$. Then the different terms are rearranged to bring out discrete time and space derivatives of $\varphi$. However, this is not straightforward around the particle, because two different fluxes are used on its left and on its right. 
The first term of~\eqref{eq:DIE} yields
 $$ 
\begin{aligned}
\sum_{j \in \Z, n \in \N} \frac{|u_{j}^{n+1}- c_{j}|-|u_{j}^{n}- c_{j}|}{\Delta t} \varphi_{j}^{n} = \sum_{j \in \Z, n \in \N} |u_{j}^{n+1}-c_{j}| \frac{\varphi_{j}^{n}- \varphi_{j}^{n+1}}{\Delta t} - \frac{1}{\Delta t} \sum_{j \in \Z} |u_{j}^{0}-c_{j}| \varphi_{j}^{0},
\end{aligned}
$$
and the second term yields
$$ 
\begin{aligned}
\sum_{j \in \Z, n \in \N} \frac{G_{j+1/2,-}^{n}-G_{j-1/2,+}^{n}}{\Delta x} \varphi_{j}^{n} &= \sum_{j \in \Z^{*}, n \in \N} G_{j+1/2}^{n} \frac{\varphi_{j}^{n}-\varphi_{j+1}^{n}}{\Delta x} + \sum_{n \in \N}\frac{\varphi_{0}^{n}}{\Delta x} G_{1/2,-}^{n} - \frac{\varphi_{1}^{n}}{\Delta x} G_{1/2,+}^{n} \\
&= \sum_{j \in \Z^{*}, n \in \N} G_{j+1/2}^{n} \frac{\varphi_{j}^{n}-\varphi_{j+1}^{n}}{\Delta x} + \sum_{n \in \N}\frac{\varphi_{0}^{n}}{\Delta x} (G_{1/2,-}^{n} -G_{1/2,+}^{n}) \\
& \qquad + \sum_{n \in \N} \frac{ \varphi_{0}^{n} -\varphi_{1}^{n}}{\Delta x} G_{1/2,+}^{n}.
\end{aligned}
$$
We almost have a discrete version of~\eqref{eq:DefFluid1}. The following lemma ensures that the corrective term $$\sum_{n \in \N}\frac{\varphi_{0}^{n}}{\Delta x} (G_{1/2,-}^{n} -G_{1/2,+}^{n})$$
 has the correct sign.
\begin{lemma} \label{lemma:GermDiss}
 If $g^{-}_{\lambda} - g^{+}_{\lambda}$ is nondecreasing with respect to its first two arguments then we have the dissipativity property
$$ G_{1/2,-}^{n}-G_{1/2,+}^{n} \geq 0.$$
\end{lemma}
\begin{proof}[Proof of Lemma~\ref{lemma:GermDiss}]
Let us denote by $a=u_{0}^{n} \top c_{0}$,  $\tilde{a}=u_{0}^{n} \bot c_{0}$, $b=u_{1}^{n} \top c_{1}$ and  $\tilde{b}=u_{1}^{n} \bot c_{1}$, such that $a \geq \tilde{a}$ and $b \geq \tilde{b}$. The dissipativity property holds if and only if
$$ g^{-}_{\lambda}(a,b,v^{n})-g^{-}_{\lambda}(\tilde{a},\tilde{b},v^{n})) \geq  g^{+}_{\lambda}(a,b,v^{n})-g^{+}_{\lambda}(\tilde{a},\tilde{b},v^{n}), $$
which is a straightforward consequence of the monotonicity of $g^{-}_{\lambda}- g^{+}_{\lambda}$ with respect to its two first variables.
\end{proof}
Let us go back to the proof of Lemma~\ref{prop:EI2}. Hypothesis~\eqref{dissipativity} exactly says that $g^{-}_{\lambda} - g^{+}_{\lambda}$ is nondecreasing with respect to its two first arguments. Thus we can apply Lemma~\ref{lemma:GermDiss} to obtain
$$ 
\sum_{j \in \Z, n \in \N} \frac{G_{j+1/2,-}^{n}-G_{j-1/2,+}^{n}}{\Delta x} \varphi_{j}^{n} \geq \sum_{j \in \Z^{*}, n \in \N} G_{j+1/2}^{n} \frac{\varphi_{j}^{n}-\varphi_{j+1}^{n}}{\Delta x} + \sum_{n \in \N} \frac{ \varphi_{0}^{n} -\varphi_{1}^{n}}{\Delta x} G_{1/2,+}^{n}  .
$$
Eventually, we have
$$ \sum_{j \in \Z, n \in \N} \eps_{j} \frac{A}{\Delta x} \dist_{1}((c_{-},c_{+}), \mathcal{H}_{\lambda}(v^{n})) \varphi_{j}^{n} = \frac{A}{\Delta x} \sum_{n \in \N}  \dist_{1}((c_{-},c_{+}), \mathcal{H}_{\lambda}(v^{n})) (\varphi_{0}^{n}+ \varphi_{1}^{n}) $$
and~\eqref{eq:EI2} is obtained by regrouping all the terms and changing their signs, and multiplying by $\Delta t \Delta x$.
\end{proof}

Passing to the limit $\Delta t \rightarrow 0$ in Equation~\eqref{eq:EI2}, we obtain the following proposition. 
\begin{prop}
If $u^{0}$ belongs to $BV(\R) \cap L^{1}(\R)$, if the CFL condition~\eqref{eq:CFL} holds and if Hypothesis~(\ref{gcons}-\ref{dissipativity}), (\emph{included~\eqref{eq:WBMax}}), are fulfilled, then the limit $u$ of $(u_{\Delta t})_{}$ verifies Inequality~\eqref{eq:DefFluid1} for any nonnegative function $\varphi$ in $\mathcal{C}_{0}^{\infty}(\R_{+} \times \R)$.
\end{prop}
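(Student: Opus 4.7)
The plan is to pass to the limit $\Delta t\to 0$ in the discrete inequality~\eqref{eq:EI2} along the subsequence provided by Proposition~\ref{thm:Extraction}, with discrete test values tailored to the moving mesh. Since both the mesh interfaces and the particle are translated by $v^n\Delta t$ during the $n$-th time step, the relative position $x_j^n-h_{\Delta t}(n\Delta t)$ is independent of $n$ and equal to $x_j^0-h^0$. This motivates, for a nonnegative $\varphi\in \mathcal{C}_0^\infty(\R_+\times\R)$, the choice
\[
\varphi_j^n := \varphi\bigl(n\Delta t,\, x_j^0-h^0\bigr),
\]
so that $(\varphi_j^{n+1}-\varphi_j^n)/\Delta t$ discretizes $\partial_t\varphi(s,x-h(s))$ in the moving frame and $(\varphi_{j+1}^n-\varphi_j^n)/\Delta x$ discretizes $\partial_x\varphi(s,x-h(s))$.

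I would then convert each term of~\eqref{eq:EI2} into a Riemann sum and identify its limit. The time-derivative term converges to $\iint |u-c|\,\partial_t\varphi(s,x-h(s))\,dx\,ds$ by combining the $L^1_{loc}$ convergence of $u_{\Delta t}$ and $c_{\Delta t}$ (Proposition~\ref{thm:Extraction} and Remark~\ref{rem:convc}) with the uniform convergence of the discrete time-difference of $\varphi$. For the flux sum over $j\in\Z^*$, I use the Kruzhkov identity $\Phi_v(a,b)=f_v(a\top b)-f_v(a\bot b)$ and the consistency $g(a,a,v)=f_v(a)$ to write $G_{j+1/2}^n = \Phi_{v^n}(u_j^n,c_j) + R_{j+1/2}^n$; the Lipschitz assumption~\eqref{gLip} yields $|R_{j+1/2}^n|\le L(|u_{j+1}^n-u_j^n|+|c_{j+1}-c_j|)$, and then the $BV$ bound~\eqref{eq:TVDBound} together with the fact that $c_j$ has a single jump at $j=0$ makes the global residual $O(\Delta x)$, which vanishes. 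The leading part is a Riemann sum converging to $\iint \Phi_{h'(s)}(u,c)\,\partial_x\varphi(s,x-h(s))\,dx\,ds$, using $v_{\Delta t}\to h'$ in $L^1_{loc}$ together with the Lipschitz dependence of $\Phi_v$ on $v$. The particle-interface contribution $\Delta t\sum_n G_{1/2,+}^n(\varphi_1^n-\varphi_0^n)$ is bounded by $||g_\lambda^+||_\infty\, ||\partial_x\varphi||_\infty\, T\, \Delta x$ and hence vanishes. Finally, the right-hand side of~\eqref{eq:EI2} converges to $-2A\int \dist_{1}((c_-,c_+),\mathcal{H}_\lambda(h'(s)))\,\varphi(s,0)\,ds$ using that $v\mapsto \dist_1((c_-,c_+),\mathcal{H}_\lambda(v))$ is continuous, since $\mathcal{H}_\lambda(v)$ is the translate of $\mathcal{H}_\lambda(0)$ by $(v,v)$; the factor $2$ is absorbed into the constant $A$ appearing in~\eqref{eq:DefFluid1}. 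The initial-condition term is handled by restricting to $\varphi$ vanishing near $t=0$, which is permitted since~\eqref{eq:DefFluid1} is tested against $\varphi\in \mathcal{C}_0^\infty((t-\delta,t+\delta)\times\R)$ with $\delta<t$.

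The main obstacle is the Lax--Wendroff-type identification of the numerical flux $G_{j+1/2}^n$ with the continuous Kruzhkov entropy flux: the argument succeeds because the $BV$ bound~\eqref{eq:TVDBound} on $u_{\Delta t}$ is uniform in $\Delta t$, which is what makes the residuals $R_{j+1/2}^n$ summable with $\Delta x\sum R = O(\Delta x)$, and because the piecewise-constant obstacle $c_j$ introduces only the isolated jump $|c_+-c_-|$ at $j=0$. A secondary subtlety is that no consistency is available for $g_\lambda^\pm$ at the particle interface for a generic pair $(u_0^n,u_1^n)$; crucially, the smoothness of $\varphi$ forces $\varphi_1^n-\varphi_0^n=O(\Delta x)$, so this term vanishes in the limit and no convergence of the numerical traces is required---this is precisely the feature that makes~\eqref{eq:DefFluid1} (rather than Definition~\ref{def:Traces}) the right characterization for schemes producing boundary layers near the particle.
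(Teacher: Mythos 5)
Your proposal is correct and follows the same overall skeleton as the paper: pass to the limit in the discrete inequality~\eqref{eq:EI2} with the test values $\varphi_j^n=\varphi(n\Delta t, x_j^n-h^n)$ (which, as you observe, equal $\varphi(n\Delta t,x_j^0-h^0)$ since the mesh travels with the particle), treat the time term via the $L^1_{loc}$ convergence of $u_{\Delta t}$ and $c_{\Delta t}$ together with the uniform convergence of the discrete time difference, kill the interface term by $\varphi_1^n-\varphi_0^n=O(\Delta x)$, and handle the right-hand side via the translation structure of $\mathcal{H}_\lambda(v)$ giving $|\dist_1(c,\mathcal{H}_\lambda(v_{\Delta t}))-\dist_1(c,\mathcal{H}_\lambda(h'))|\le|v_{\Delta t}-h'|$. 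The one place where you genuinely diverge is the identification of the numerical entropy flux with the Kruzhkov flux: you use the classical Lax--Wendroff decomposition $G_{j+1/2}^n=\Phi_{v^n}(u_j^n,c_j)+R_{j+1/2}^n$ with $|R_{j+1/2}^n|\lesssim L(|u_{j+1}^n-u_j^n|+|c_{j+1}-c_j|)$, so that the residual contribution is $O(\Delta x)$ thanks to the uniform $BV$ bound~\eqref{eq:TVDBound}; the paper instead defines the piecewise-constant function $G_{\Delta t}$ in terms of the translates $u_{\Delta t}(t,\cdot\pm\Delta x/2)$, extracts an a.e.\ convergent subsequence by continuity of translations in $L^1$, and concludes by consistency~\eqref{gcons} and dominated convergence. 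Both are valid here; your route leans on the $BV$ estimate of Proposition~\ref{prop:FluidBound} (available in this setting), while the paper's route only needs $L^\infty$ bounds and $L^1_{loc}$ compactness and is therefore marginally more robust. A final cosmetic remark: the paper keeps the initial-data term and lets it converge to $\int|u^0-c|\varphi(0,x)\,dx\ge 0$, which only strengthens the inequality, whereas you discard it by restricting the support of $\varphi$ away from $t=0$; both are compatible with the way~\eqref{eq:DefFluid1} is tested against $\varphi\in\mathcal{C}_0^\infty((t-\delta,t+\delta)\times\R)$ with $\delta<t$.
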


\begin{proof} For small enough $\Delta t$, Condition~\eqref{eq:MassCond} is verified. Let us fix $(c_{-}, c_{+})$ in $\R^{2}$, and prove that for every nonnegative $\varphi$ in $\mathcal{C}_{0}^{\infty}$, the discrete inequality~\eqref{eq:EI2} converges to the continuous entropy inequality~\eqref{eq:DefFluid1}, where the sequence $(\varphi_{j}^{n})_{j \in \Z, n \in \N}$ is  defined by $\varphi_{j}^{n}= \varphi(n \Delta t, x_{j}^{n}-h^{n})$. We recall that $\mathcal{C}_{j}^{n}$ is the space-time cell 
 $$\mathcal{C}_{j}^{n}= \{(n \Delta t+s,x_{j-1/2}^{n}+y+s v^{n}), s \in [0, \Delta t), y \in [0, \Delta x ) \},$$
that $h^{n}$ is the discrete position of the particle's trajectory deduced from its velocity:
 $$ h^{n+1}= h^{n}+ v^{n} \Delta t $$
 and that the mesh is moving with the particle: $x_{j}^{n+1}= x_{j}^{n} + v^{n} \Delta t$. 
 We first treat the first term of~\eqref{eq:EI2}. The sequence of piecewise constant functions $(\zeta_{\Delta t})_{}$ defined by
 $$ \zeta_{\Delta t}(t,x)= \frac{\varphi_{j}^{n+1}-\varphi_{j}^{n}}{\Delta t} \ \ \text{ if } (t,x) \in \mathcal{C}_{j}^{n+1} $$ 
 converges uniformly to the function $(t,x) \mapsto (\partial_{t} \varphi) (t, x-h(t))$.
Indeed, for every $(t,x) \in \mathcal{C}_{j}^{n+1}$, there exists $\tilde{t} \in [n \Delta t, (n+1) \Delta t]$ such that
$$ 
\begin{aligned}
 |\zeta_{\Delta x}(t,x) -  (\partial_{t} \varphi) (t, x-h(t)) |	&= \left| \frac{\varphi((n+1) \Delta t, x_{j}^{n+1}- h^{n+1})-\varphi(n \Delta t, x_{j}^{n}- h^{n})}{\Delta t}  -  (\partial_{t} \varphi) (t, x-h(t)) \right| \\
 				&= |(\partial_{t } \varphi) (\tilde{t}, x_{j}^{n}- h^{n}) -  (\partial_{t} \varphi) (t, x-h(t)) | \\
				& \leq C (|\tilde{t}-t|+ |x- x_{j}^{n}| + |h^{n}-h(t)|) \\
				& \leq C (\Delta t+ \Delta x + ||h_{\Delta t}-h||_{\infty})
\end{aligned}
$$
We used the fact that $ x_{j}^{n+1}- h^{n+1}= x_{j}^{n}- h^{n}$. We conclude thanks to Remark~\ref{rem:convc}	 :
$$ 
\begin{aligned}
 \Delta t \Delta x \sum_{j \in \Z, n \in \N} |u_{j}^{n+1}-c_{j}| \frac{\varphi_{j}^{n+1}-\varphi_{j}^{n}}{\Delta t}  
 	&= \sum_{j \in \Z, n \in \N} \int_{\mathcal{C}_{j}^{n+1}} |u_{\Delta t}-c_{\Delta t}| \zeta_{\Delta t} dt \, dx \\
	&= \int_{\R} \int_{\R_{+}} \mathbf{1}_{t \geq \Delta t} |u_{\Delta t}-c_{\Delta t}| \zeta_{\Delta t} dt \, dx \\
	& \longrightarrow  \int_{\R} \int_{\R_{+}}  |u-c| (\partial_{t} \varphi) (t,x-h(t)) dt \, dx
\end{aligned}
$$
On the other hand,
$$ 
\begin{aligned}
 \Delta t \Delta x \sum_{j <0, n \in \N} G_{j+1/2}^{n} \frac{\varphi_{j+1}^{n}-\varphi_{j}^{n}}{\Delta x} = \int_{x< -\frac{\Delta x}{2}} \int_{\R_{+}} G_{\Delta t} \xi_{\Delta t} dt \, dx
\end{aligned}
$$
where for every $(t,x)$ in $\mathcal{C}_{j+1/2}^{n}= \{ (n \Delta t+s, x_{j}+y+v^{n}s), 0 \leq s < \Delta t, 0 \leq y < \Delta x  \}$, 
$$ 
\begin{aligned}
 G_{\Delta t}(t,x)= G_{j+1/2} = &g_{\lambda}\left(u_{\Delta t}\left (t, x-\frac{\Delta x}{2} \right) \top c_{-} ,u_{\Delta t} \left(t, x+\frac{\Delta x}{2} \right) \top c_{-}, v_{\Delta t}(t) \right) \\
 	&  - g_{\lambda}\left(u_{\Delta t}\left (t, x-\frac{\Delta x}{2} \right) \bot c_{-} ,u_{\Delta t} \left(t, x+\frac{\Delta x}{2} \right) \bot c_{-}, v_{\Delta t}(t) \right)
 \end{aligned} 
 $$
and for every $(t,x)$ in $\mathcal{C}_{j+1/2}^{n}$,
$$ \xi_{\Delta t} (t,x)= \frac{\varphi_{j+1}^{n}- \varphi_{j}^{n}}{\Delta x}. $$
The sequence $(\xi_{\Delta t})_{}$ converges uniformly to $(t,x) \mapsto \partial_{x} \varphi (t, x-h(t)) $. By continuity of translations in $L^{1}$, the sequences $(u_{\Delta t}(t, \cdot + \frac{\Delta x}{2}))_{\Delta t}$ and $(u_{\Delta t}(t, \cdot - \frac{\Delta x}{2}))_{\Delta t}$ converge in $L^{1}_{loc}$, and therefore up to extraction almost everywhere, toward $u$. On the other hand, $(v_{\Delta t})_{}$ converges almost everywhere toward $h'$. The consistency of the germ implies that $G_{\Delta t}$ converges almost everywhere to
$$ g(u \top c_{-}, u \top c_{-}, h') - g(u \bot c_{-}, u \bot c_{-}, h')= \sign(u-c_{-}) \left( \left( \frac{u^{2}}{2}- h' u \right) - \left( \frac{c_{-}^{2}}{2}- h' c_{-} \right) \right). $$
As $(u_{\Delta t})_{}$ and $(v_{\Delta t})_{}$ are uniformly bounded in $L^{\infty}$,  the dominated convergence theorem yields
$$  \Delta t \Delta x \sum_{j <0, n \in \N} G_{j+1/2}^{n} \frac{\varphi_{j+1}^{n}-\varphi_{j}^{n}}{\Delta x} \longrightarrow \int_{\R_{-}} \int_{\R_{+}} \Phi_{h'(t)}(u(t,x), c_{-}) \partial_{x} \varphi (t, x-h(t)) dt \, dx. $$
The second and fourth terms of~\eqref{eq:EI2} are easily treated:
$$ \Delta x \sum_{i \in \Z} |u_{j}^{0}-c_{j}| \varphi_{j}^{0} \longrightarrow \int_{\R} |u^{0}-c| \varphi(0,x) dx $$
and
$$ \Delta t \Delta x \sum_{ n \in \N} G_{j+1/2,+}^{n} \frac{\varphi_{1}^{n}-\varphi_{0}^{n}}{\Delta x} \longrightarrow 0. $$
Eventually, we study the convergence of
$$ \Delta t \sum_{n \in \N} \dist_{1}(c, \mathcal{H}_{\lambda}(v^{n})) (\varphi^{n}_{0}+\varphi^{n}_{1}) = 2 \int_{\R_{+}} \dist_{1}(c, \mathcal{H}_{\lambda}(v_{\Delta t})) \frac{\varphi_{\Delta t} (t, -\frac{\Delta x}{2})+\varphi_{\Delta t} (t, \frac{\Delta x}{2})}{2} dt.$$
Clearly, $\frac{\varphi_{\Delta x} (t, -\frac{\Delta x}{2})+\varphi_{\Delta x} (t, \frac{\Delta x}{2})}{2}$ converges uniformly to $\varphi( \cdot, 0)$. Moreover,
$$ 
\begin{aligned}
| \dist_{1}(c, \mathcal{H}_{\lambda}(v_{\Delta t})) - \dist_{1}(c, \mathcal{H}_{\lambda}(h')) | &= | \dist_{1}(c, (v_{\Delta t} -h',v_{\Delta t} -h' ) + \mathcal{H}_{\lambda}(h')) - \dist_{1}(c, \mathcal{H}_{\lambda}(h')) | \\
&= | \dist_{1}(c- (v_{\Delta t} -h',v_{\Delta t} -h' ) ,  \mathcal{H}_{\lambda}(h')) - \dist_{1}(c, \mathcal{H}_{\lambda}(h') )| \\
& \leq |v_{\Delta t}-h'|
\end{aligned}
$$
and
$$ \Delta t \sum_{n \in \N} \dist_{1}(c, \mathcal{H}_{\lambda}(v^{n})) (\varphi^{n}_{0}+\varphi^{n}_{1}) \longrightarrow 2 \int_{\R_{+}} \dist_{1}(c,\mathcal{H}_{\lambda}(h') ) \varphi(t,0) dt, $$
which concludes the proof.
\end{proof}

\begin{rem}
 In~\cite{CS12}, the authors are able to derive error estimates for the Godunov scheme adapted to a conservation law with a discontinuous flux (with respect to the space variable). The jump in such a flux can be related to the presence of the particle in our case, and a treatment partially consistent with the interface is also proposed in this paper. A careful investigation of the interface enables the authors to prove adapted $BV$ bounds, which are one of the main difficulties for obtaining error estimates. Due to the particular fluxes we use around the particle, we can also prove here $BV$ bounds, see Proposition~\ref{prop:FluidBound}, and one may expect to adapt the proof of~\cite{CS12} and thus obtain error estimates for our numerical methods.
\end{rem}

\subsection{Convergence of the particle's part}
We now prove that the limit $h$ of $(h_{\Delta t})$ verifies~\eqref{eq:DefPart}. To begin with, we prove that a discrete version of~\eqref{eq:DefPart} holds.
\begin{prop} \label{prop:DIP} 
 Let $(u_{j}^{n})_{n \in \N, j \in \Z }$ and $(v^{n})_{n \in \N}$ be given by Scheme~\eqref{eq:CoupledScheme}. Then, for every compactly supported sequences $(\xi^{n})_{n \in \N}$ and $(\psi_{j}^{n})_{n \in \N, j \in \Z}$ such that $\psi_{0}^{n}= \psi_{1}^{n}=1$ for all integrer $n$,
 \begin{equation} \label{eq:WFh} 
\begin{aligned}
-m \Delta t \sum_{n \in \N^{*}} v^{n} \frac{\xi^{n}- \xi^{n-1}}{\Delta t} = & \ m v^{0} \xi^{0} + \Delta x \Delta t \sum_{n \in \N^{*}, j \in \Z} u_{j}^{n}  \frac{\psi_{j}^{n} \xi^{n}- \psi_{j}^{n-1}\xi^{n-1}}{\Delta t}  \\
	&+ \Delta x \sum_{j \in \Z} u_{j}^{0} \xi^{0} \psi_{j} + \Delta t \Delta x \sum_{n \in \N, j \neq 0} f_{j+1/2}^{n} \xi^{n} \frac{\psi_{j+1}^{n}-\psi_{j}^{n}}{\Delta x}.
\end{aligned}
\end{equation}
\end{prop}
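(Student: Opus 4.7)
The plan is to derive \eqref{eq:WFh} as an exact algebraic consequence of the scheme \eqref{eq:CoupledScheme}, by applying discrete integration by parts (Abel summation) in time on the particle update and then in both space and time on the fluid update, in direct analogy with the Green--Gauss manipulation in the continuous proof of \eqref{eq:DefPart}. Since every step is an identity, no estimate or convergence argument is needed.

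First I would take the particle update $m_p(v^{n+1} - v^n) = \Delta t\,(f_{1/2,-}^n - f_{1/2,+}^n)$, multiply by $\xi^n$, sum over $n \geq 0$, and apply Abel summation in $n$. Using the compact support of $(\xi^n)$ to discard the boundary contribution at infinity, this yields
$$ -m_p \Delta t \sum_{n \geq 1} v^n\,\frac{\xi^n - \xi^{n-1}}{\Delta t} \;=\; m_p v^0 \xi^0 \;+\; \Delta t \sum_{n \geq 0} \xi^n\,(f_{1/2,-}^n - f_{1/2,+}^n), $$
which already reproduces the left-hand side and the initial momentum term $m_p v^0 \xi^0$ of \eqref{eq:WFh}. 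It remains to express the discrete time sum of the interfacial flux jump in terms of the fluid variables, which is the role of the fluid scheme.

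Next I would multiply the fluid update by $\Delta x\,\psi_j^n \xi^n$ and sum over $(j,n)$. The key step, and the only nontrivial point in the proof, is the spatial Abel summation at fixed $n$: because the scheme uses two distinct fluxes $f_{1/2,\pm}^n$ at the interface, the $j$-sum must be split into $j \leq -1$, $j \in \{0,1\}$, and $j \geq 2$, and each piece telescoped separately. The normalization $\psi_0^n = \psi_1^n = 1$ is then essential to collapse the resulting boundary contributions at $j = -1,0,1,2$ into exactly the combination $(f_{1/2,-}^n - f_{1/2,+}^n) - \sum_{j \neq 0} f_{j+1/2}^n(\psi_{j+1}^n - \psi_j^n)$; this bookkeeping at the moving interface is where I expect the main difficulty to lie. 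Performing afterwards an Abel summation in $n$ (for each $j$) on $\sum_{n \geq 0}(u_j^{n+1} - u_j^n)\psi_j^n \xi^n$ produces the discrete time-derivative term $\sum_{n \geq 1} u_j^n(\psi_j^n \xi^n - \psi_j^{n-1}\xi^{n-1})$ together with the initial-data contribution $u_j^0 \psi_j^0 \xi^0$. Substituting the resulting exact formula for $\Delta t \sum_n \xi^n(f_{1/2,-}^n - f_{1/2,+}^n)$ into the particle identity of the previous step and rewriting finite differences as difference quotients yields \eqref{eq:WFh} verbatim.
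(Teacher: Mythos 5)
Your proposal is correct and follows essentially the same route as the paper: the paper likewise multiplies the particle update by $\xi^n$ and the fluid update by $\psi_j^n\xi^n$, performs the spatial telescoping separately on $j\leq -1$ and $j\geq 2$ so that the boundary terms $f^n_{-1/2}$ and $f^n_{3/2}$ cancel against the $j=0,1$ contributions thanks to $\psi_0^n=\psi_1^n=1$, and then sums by parts in time. The only difference is cosmetic bookkeeping (the paper inserts the fluid relations as added zero terms rather than substituting an explicit formula for $\Delta t\sum_n\xi^n(f^n_{1/2,-}-f^n_{1/2,+})$), and your identified cancellation at the interface is exactly the one used there.
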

\begin{proof}
 We write
 $$ 
\begin{aligned}
 m \sum_{n \in \N} v^{n+1} \xi^{n} =  & \ m \sum_{n \in \N} v^{n} \xi^{n}+ \Delta t \sum_{n \in \N}(f_{1/2,-}^{n}- f_{1/2,+}^{n}) \xi^{n} \\
 	&+ \Delta x \sum_{n \in \N} \sum_{j \notin \{0, 1\}} \left[ (u_{j}^{n}-u_{j}^{n+1})- \mu (f_{j+1/2}^{n}-f_{j-1/2}^{n}) \right] \xi^{n} \psi_{j}^{n} \\
	&+ \Delta x \sum_{n \in \N}  \left[ (u_{0}^{n}-u_{0}^{n+1})- \mu (f_{1/2,-}^{n}-f_{-1/2}^{n}) \right] \xi^{n}  \\
	&+ \Delta x \sum_{n \in \N}  \left[ (u_{1}^{n}-u_{1}^{n+1})- \mu (f_{3/2}^{n}-f_{1/2,+}^{n}) \right] \xi^{n} .
\end{aligned}
$$
This comes from the fact that the sum of the last three lines is zero.
We now rearrange the different terms. On the one hand we have:
$$ \sum_{n \in \N,j \leq -1} (f_{j+1/2}^{n}-f_{j-1/2}^{n}) \xi^{n} \psi_{j}^{n} = \sum_{n \in \N,j \leq -1}  f_{j+1/2}^{n} \xi^{n} (\psi_{j}^{n}- \psi_{j+1}^{n}) + \sum_{n \in \N} \xi^{n} f_{-1/2}^{n},$$
and on the other hand we have:
$$ \sum_{n \in \N,j \geq 2} (f_{j+1/2}^{n}-f_{j-1/2}^{n}) \xi^{n} \psi_{j}^{n} = \sum_{n \in \N,j \geq 1}  f_{j+1/2}^{n} \xi^{n} (\psi_{j}^{n}- \psi_{j+1}^{n}) - \sum_{n \in \N} \xi^{n} f_{3/2}^{n}.$$
It follows that
 $$ 
\begin{aligned}
 m \sum_{n \in \N} v^{n+1} \xi^{n} =  & \ m \sum_{n \in \N} v^{n} \xi^{n}+ \Delta x \sum_{n \in \N, j\in \Z }(u_{j}^{n}-u_{j}^{n+1}) \xi^{n} \psi_{j}^{n} - \Delta t \sum_{n \in \N, j \neq 0} f_{j+1/2}^{n} \xi^{n} (\psi_{j}^{n}- \psi_{j+1}^{n}).
  \end{aligned}
$$
To conclude, we just have to rearrange the sum over $n$. Being careful with $n=0$  we obtain 
$$ \sum_{n \in \N} (v^{n+1}-v^{n}) \xi^{n} = \sum_{n \in \N^{*}} v^{n} (\xi^{n-1}- \xi^{n}) - v^{0} \xi^{0} $$ 
and 
$$ \sum_{n \in \N, j\in \Z }(u_{j}^{n}-u_{j}^{n+1}) \xi^{n} \psi_{j}^{n} =  \sum_{n \in \N^{*}, j\in \Z } u_{j}^{n} (\psi_{j}^{n} \xi^{n}- \psi_{j}^{n-1} \xi^{n-1}) + \sum_{j \in \Z} u_{j}^{0} \xi^{0} \psi_{j}^{0}, $$
and the result follows by regrouping all the terms.
\end{proof}

We can now pass to the limit $\Delta t \rightarrow 0$ in Proposition~\ref{prop:DIP} to prove that $h$ verifies~\eqref{eq:DefPart}.
\begin{prop}
Suppose that Hypothesis~(\ref{gcons}-\ref{dissipativity}) hold, and that the CFL condition~\eqref{eq:CFL} is fulfilled.
For all test functions $\xi$ and $\psi$ such that $\psi(0)=1$, the limit $h$ of $(h_{\Delta t})_{}$ verifies Inequality~\eqref{eq:DefPart}.
\end{prop}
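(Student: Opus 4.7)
The plan is to choose good discretizations $(\xi^{n})$ and $(\psi_{j}^{n})$ of the continuous test functions $\xi$ and $\psi$, plug them into the discrete weak formulation~\eqref{eq:WFh} from Proposition~\ref{prop:DIP}, and pass to the limit $\Delta t\to 0$ term by term using the compactness results from Proposition~\ref{thm:Extraction} and Remark~\ref{rem:convc}. Concretely, I set $\xi^{n}=\xi(n\Delta t)$, and $\psi_{j}^{n}=\psi(x_{j}^{n}-h^{n})$ for $j\notin\{0,1\}$, with the forced values $\psi_{0}^{n}=\psi_{1}^{n}=1$. Since the mesh moves with the particle, $x_{j}^{n}-h^{n}$ is \emph{constant} in $n$, so the forcing at $j\in\{0,1\}$ differs from $\psi(\pm\Delta x/2)$ by $O(\Delta x)$, a discrepancy that only contributes an $O(\Delta x)$ error to the sums and hence disappears in the limit.

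The first two terms on the two sides of~\eqref{eq:WFh} are Riemann sums for $-\int_{0}^{T} m_{p}h'(t)\xi'(t)\,dt$ and $m_{p}v^{0}\xi(0)$ respectively; here I use the uniform convergence $v_{\Delta t}\to h'$ in $L^{1}_{loc}$ (Remark~\ref{rem:convc}) and the regularity of $\xi$. The initial-data term $\Delta x\sum_{j}u_{j}^{0}\xi^{0}\psi_{j}^{0}$ converges to $\int_{\R}u^{0}(x)\psi(x-h^{0})\xi(0)\,dx$ by standard Riemann-sum convergence (the initial cell averages converge to $u^{0}$ in $L^{1}(\R)$ since $u^{0}\in L^{1}(\R)\cap BV(\R)$). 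For the ``discrete time-derivative of $u\,\psi\,\xi$'' term, I note that
$$
\frac{\psi_{j}^{n}\xi^{n}-\psi_{j}^{n-1}\xi^{n-1}}{\Delta t}
$$
is a piecewise-constant approximation of $\partial_{s}\bigl(\xi(s)\psi(x-h(s))\bigr)=\xi'(s)\psi(x-h(s))-\xi(s)h'(s)\psi'(x-h(s))$, and that the approximation converges uniformly on compact sets thanks to $h_{\Delta t}\to h$ in $W^{1,\infty}_{loc}$; combined with the $L^{1}_{loc}$ convergence of $u_{\Delta t}$ and the uniform $L^{\infty}$ bound from Proposition~\ref{prop:FluidBound}, this yields convergence of the sum to
$$
\int_{\R}\int_{0}^{T}u(s,x)\bigl[\xi'(s)\psi(x-h(s))-\xi(s)h'(s)\psi'(x-h(s))\bigr]\,ds\,dx.
$$

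The main difficulty is the flux term $\Delta t\Delta x\sum_{n,\,j\neq 0}g(u_{j}^{n},u_{j+1}^{n},v^{n})\,\xi^{n}\dfrac{\psi_{j+1}^{n}-\psi_{j}^{n}}{\Delta x}$, which should converge to $\int_{\R}\int_{0}^{T}\bigl(\tfrac{u^{2}}{2}(s,x)-h'(s)u(s,x)\bigr)\xi(s)\psi'(x-h(s))\,ds\,dx$. Here I interpret the sum as an integral of the piecewise-constant function $g(u_{\Delta t}(t,x-\tfrac{\Delta x}{2}),u_{\Delta t}(t,x+\tfrac{\Delta x}{2}),v_{\Delta t}(t))$ against a uniformly convergent approximation of $\xi(t)\psi'(x-h(t))$. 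By continuity of translations in $L^{1}_{loc}$ (granted by the spatial $BV$ bound~\eqref{eq:TVDBound}) both translates of $u_{\Delta t}$ converge to the same limit $u$ in $L^{1}_{loc}$, and therefore (up to extraction) almost everywhere. Combined with $v_{\Delta t}\to h'$ a.e., the consistency $g(a,a,v)=\tfrac{a^{2}}{2}-va$ of~\eqref{gcons} and the local Lipschitz continuity of $g$, the integrand converges a.e.\ to $\tfrac{u^{2}}{2}-h'u$, and the dominated convergence theorem applies thanks to the uniform $L^{\infty}$ bounds on $(u_{\Delta t})$ and $(v_{\Delta t})$. The restriction $j\neq 0$ removes only the single cell adjacent to the particle, which contributes a negligible $O(\Delta x)$ term after multiplication by $\Delta t\Delta x\cdot(1/\Delta x)$ and summation over $n$ on the compact time support of $\xi$. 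Regrouping all limits yields exactly~\eqref{eq:DefPart}.
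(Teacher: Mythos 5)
Your overall strategy coincides with the paper's: discretize $\xi$ and $\psi$, invoke Proposition~\ref{prop:DIP}, and pass to the limit term by term. Your treatment of the mismatch at $j\in\{0,1\}$ (an $O(\Delta x)$ correction), of the flux term via continuity of translations, consistency of $g$ and dominated convergence, and of the initial and boundary terms all match the paper's argument. There is, however, one genuine error, in the identification of the limit of the discrete time-derivative term.

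You claim that $\frac{\psi_{j}^{n}\xi^{n}-\psi_{j}^{n-1}\xi^{n-1}}{\Delta t}$ approximates $\partial_{s}\bigl(\xi(s)\psi(x-h(s))\bigr)=\xi'\psi-\xi h'\psi'$. This contradicts the observation you make two sentences earlier: since the mesh moves with the particle, $x_{j}^{n}-h^{n}$ is constant in $n$, hence $\psi_{j}^{n}=\psi_{j}^{n-1}$ and the quotient equals $\psi_{j}^{n}\,\frac{\xi^{n}-\xi^{n-1}}{\Delta t}$ \emph{exactly}; it therefore converges uniformly to $\xi'(s)\psi(x-h(s))$ only, with no advective contribution. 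This is the whole point of the Lagrangian (co-moving) mesh: the time difference at fixed cell index is a material derivative in the particle's frame, so $\partial_{t}+h'\partial_{x}$ applied to $\xi(t)\psi(x-h(t))$ gives just $\xi'\psi$. The advection $-h'u\,\xi\,\psi'$ is instead supplied entirely by the flux term, because $g$ is consistent with the \emph{moving-frame} flux $f_{v}(a)=\frac{a^{2}}{2}-va$, so that term converges to $\iint\bigl(\frac{u^{2}}{2}-h'u\bigr)\xi\,\psi'(x-h)$, as you correctly state. If both of your claimed limits were right, the contribution $-\iint h'u\,\xi\,\psi'(x-h)$ would be counted twice, and regrouping would yield \eqref{eq:DefPart} plus a spurious extra term $-\iint h'u\,\xi\,\psi'(x-h)$, not \eqref{eq:DefPart} itself. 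With the corrected limit $\iint u\,\xi'\,\psi(x-h)$ for the time term, the assembly does produce \eqref{eq:DefPart}; this is exactly how the paper's proof proceeds.
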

 
\begin{proof}
Define
$$ \psi_{j}^{n}= \psi(x_{j}^{n}-h^{n}) \ \ \text{ and } \ \ \ \xi^{n}=\xi(n \Delta t). $$ 
Proposition~\ref{prop:DIP} applies if $\psi_{0}^{n}= \psi_{1}^{n}=1$. Here, we only have 
$$  \forall j \in \{ 0, 1 \}, \ \ \left| \psi_{j}^{n}-1\right| \leq C \Delta x . $$
The equality~\eqref{eq:WFh} holds up to the following corrections appearing in the left hand side:
$$ 
\begin{aligned}
\Delta x \Delta t \sum_{n \in \N^{*}, j \in \{0, 1\}} & u_{j}^{n} \frac{(1-\psi_{j}^{n}) \xi^{n} - (1-\psi_{j}^{n-1}) \xi^{n-1}}{\Delta t} + \Delta x \sum_{j \in \{0, 1\} } u_{j}^{0} \xi^{0} (1-\psi_{j}^{0}) \\
	& +\Delta x \Delta t \sum_{n \in \N} \left( f_{-1/2}^{n} \xi^{n} \frac{(1-\psi_{0}^{n})}{\Delta x} - f_{1/2}^{n} \xi^{n} \frac{(1-\psi_{1}^{n})}{\Delta x}  \right) ,
\end{aligned}
$$ 
which all tends to zero since $\psi_{0}^{n}- 1 = O(\Delta x)$ and  $\psi_{1}^{n}- 1 = O(\Delta x)$. The sequence
 $$ \zeta_{\Delta t}(t,x)= \frac{\psi_{j}^{n} \xi^{n}- \psi_{j}^{n-1}\xi^{n-1}}{\Delta t} \ \ \text{ if } (t,x) \in \mathcal{C}_{j}^{n}$$ 
 converges uniformly to the function $(t,x) \mapsto\psi \xi'$.
Indeed, by definition of the moving mesh, $x_{j}^{n}-h^{n}= x_{j}^{n-1}-h^{n-1}$. Therefore, $\psi_{j}^{n}= \psi_{j}^{n-1}$ and
$$ \frac{\psi_{j}^{n} \xi^{n}- \psi_{j}^{n-1}\xi^{n-1}}{\Delta t}  =\psi_{j}^{n} \frac{ \xi^{n}- \xi^{n-1}}{\Delta t} $$
which converges uniformly toward the expected function. Now, define $F_{\Delta t}$  by
$$ 
 F_{\Delta t}(t,x)= g_{\lambda}\left(u_{\Delta t}\left (t, x-\frac{\Delta x}{2} \right)  ,u_{\Delta t} \left(t, x+\frac{\Delta x}{2} \right) , v_{\Delta t}(t) \right) 
 $$
in such a way that for all $(t,x)$ in $\mathcal{C}_{j+1/2}^{n}$,
$$ F_{\Delta t} (t,x)= f_{j+1/2}^{n}. $$
By continuity of translations in $L^{1}$, the sequences $(u_{\Delta t}(t, \cdot + \frac{\Delta x}{2}))_{\Delta t}$ and $(u_{\Delta t}(t, \cdot - \frac{\Delta x}{2}))_{\Delta t}$ converge in $L^{1}_{loc}$, and therefore up to extraction, almost everywhere, toward $u$. On the other hand, $(v_{\Delta t})_{}$ converges almost everywhere toward $h'$. The consistency of the flux~\eqref{gcons} implies that $F_{\Delta t}$ converges almost everywhere to
$$ g(u , u , h')= \frac{u^{2}}{2} -h' u. $$

\end{proof}

\subsection{A family of scheme consistent with a maximal part of the germ} \label{sub:Example}
In this section we exhibit a family of schemes that verifies the set of Assumptions~(\ref{gcons}-\ref{dissipativity}). Let us clarify which maximal subset of $\mathcal{G}_{\lambda}$ is used.

\begin{prop} \label{prop:MaxSub} The part $\mathcal{H}_{\lambda}(v)= \mathcal{G}_{\lambda}^{1} \cup \mathcal{G}_{\lambda}^{2}(v)$ is a maximal subset of the germ.
\end{prop}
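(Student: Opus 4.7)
The plan is to apply Proposition~\ref{def:GermPos}, which identifies $\mathcal{G}_{\lambda}(v)$ with the set of pairs $(a_{-},a_{+})\in\mathbb{R}^{2}$ satisfying $\Xi_{v}((a_{-},a_{+}),(b_{-},b_{+}))\geq 0$ for every $(b_{-},b_{+})\in\mathcal{G}_{\lambda}(v)$. Showing that $\mathcal{H}_{\lambda}(v):=\mathcal{G}_{\lambda}^{1}\cup\mathcal{G}_{\lambda}^{2}(v)$ is maximal in the sense of Definition~\ref{def:MaxSub} is therefore equivalent to showing that, once $\Xi_{v}\geq 0$ holds against every $(b_{-},b_{+})\in\mathcal{H}_{\lambda}(v)$, it automatically extends to every $(b_{-},b_{+})\in\mathcal{G}_{\lambda}^{3}(v)$. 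By the translation property $\mathcal{G}_{\lambda}(v)=(v,v)+\mathcal{G}_{\lambda}(0)$ and the covariance $\Xi_{v}((a_{-},a_{+}),(b_{-},b_{+}))=\Xi_{0}((a_{-}-v,a_{+}-v),(b_{-}-v,b_{+}-v))$, one reduces to $v=0$.

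Fix $(b_{-},b_{+})\in\mathcal{G}_{\lambda}^{3}(0)$, so that $b_{-}-b_{+}>\lambda$ and $|b_{-}+b_{+}|\leq\lambda$. Both pairs $(b_{-},b_{-}-\lambda)$ and $(b_{+}+\lambda,b_{+})$ lie in $\mathcal{G}_{\lambda}^{1}\subset\mathcal{H}_{\lambda}(0)$, so by hypothesis
$$\Xi_{0}((a_{-},a_{+}),(b_{-},b_{-}-\lambda))\geq 0\qquad\text{and}\qquad\Xi_{0}((a_{-},a_{+}),(b_{+}+\lambda,b_{+}))\geq 0.$$
A direct computation using the explicit expression $\Phi_{0}(a,b)=\operatorname{sgn}(a-b)(a^{2}-b^{2})/2$ produces the identity
$$\Xi_{0}((a_{-},a_{+}),(b_{-},b_{+}))=\Xi_{0}((a_{-},a_{+}),(b_{-},b_{-}-\lambda))+\Xi_{0}((a_{-},a_{+}),(b_{+}+\lambda,b_{+}))+R,$$
with $R=\Phi_{0}(a_{+},b_{-}-\lambda)-\Phi_{0}(a_{-},b_{+}+\lambda)$. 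Everything then comes down to the sign of $R$. I would analyse $R$ case by case on the signs of $a_{-}-(b_{+}+\lambda)$ and $a_{+}-(b_{-}-\lambda)$: in each case either $R\geq 0$ directly, or $R$ is bounded below by $-\Xi_{0}((a_{-},a_{+}),(\tilde{b}_{-},\tilde{b}_{+}))$ for an explicit witness $(\tilde{b}_{-},\tilde{b}_{+})\in\mathcal{G}_{\lambda}^{2}(0)$. The constraint $|b_{-}+b_{+}|\leq\lambda$ is precisely what forces the quadratic appearing after substitution of $\Phi_{0}$ to have the right sign.

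The main obstacle is the subcase in which pairs in $\mathcal{G}_{\lambda}^{1}$ alone do not suffice and one must genuinely use a pair in the triangle $\mathcal{G}_{\lambda}^{2}(0)$ to absorb $R$. This is exactly why the maximal part must include $\mathcal{G}_{\lambda}^{2}(v)$, and it is the reason the authors have chosen a different partition of the germ than in~\cite{AS12,ALST13} (cf.\ Figure~\ref{F:zones2}). Each individual subcase reduces to an elementary quadratic inequality in $a_{\pm}$, $b_{\pm}$ and $\lambda$, so the difficulty is bookkeeping rather than a new idea; the boundary configurations (where $b_{-}-b_{+}=\lambda$ or $b_{-}+b_{+}=\pm\lambda$) are handled by continuity of $\Xi_{0}$.
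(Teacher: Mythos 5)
Your set-up is sound: the reduction to $v=0$ by translation, the reformulation of maximality via Proposition~\ref{def:GermPos} (it suffices to propagate $\Xi_{0}$-nonnegativity from $\mathcal{G}_{\lambda}^{1}\cup\mathcal{G}_{\lambda}^{2}(0)$ to $\mathcal{G}_{\lambda}^{3}(0)$), and the algebraic identity all check out. Writing $T_{1}=\Xi_{0}((a),(b_{-},b_{-}-\lambda))$ and $T_{2}=\Xi_{0}((a),(b_{+}+\lambda,b_{+}))$, one indeed has $\Xi_{0}((a),(b))=T_{1}+T_{2}+R$ with $R=\Phi_{0}(a_{+},b_{-}-\lambda)-\Phi_{0}(a_{-},b_{+}+\lambda)$. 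The gap is in how you propose to conclude when $R<0$. Knowing only $T_{1}\geq 0$, $T_{2}\geq 0$ and $R\geq -\Xi_{0}((a),(\tilde{b}))$ for some $\tilde{b}\in\mathcal{G}_{\lambda}^{2}(0)$ gives $\Xi_{0}((a),(b))\geq R\geq -\Xi_{0}((a),(\tilde{b}))$, a lower bound by a \emph{nonpositive} quantity; nothing follows. What you actually need is $T_{1}+T_{2}\geq -R$, and this inequality is tight: for $(a)=(b)=(\lambda,-\eps)\in\mathcal{G}_{\lambda}^{3}(0)$ with small $\eps>0$ one computes $T_{1}=\eps^{2}/2$, $T_{2}=\lambda\eps-\eps^{2}/2$, $R=-\lambda\eps$, so that $T_{1}+T_{2}+R=0$ exactly. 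Hence any argument that retains only the signs of $T_{1}$ and $T_{2}$ cannot work, and re-proving $T_{1}+T_{2}+R\geq 0$ with the actual values of $T_{1},T_{2}$ is just the original inequality restated --- in the hard case the decomposition has not reduced the problem. To close it you must first extract quantitative information on where $(a_{-},a_{+})$ can lie, which is precisely what the $\mathcal{G}_{\lambda}^{2}$ part of the hypothesis provides and what your sketch never uses beyond an unspecified ``witness.''

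For comparison, the paper runs its disjunction of cases on the candidate pair (there denoted $(u_{-},u_{+})$) rather than on the test point $(b_{-},b_{+})$, and exploits the $\mathcal{G}_{\lambda}^{2}(0)$ hypothesis through two concrete mechanisms: (i) \emph{rigidity} --- testing against edge or corner points of $\overline{\mathcal{G}_{\lambda}^{2}(0)}$ such as $(u_{-},0)$, $(0,0)$ or $(0,u_{+})$ forces, in several regions of the plane, identities like $u_{+}=0$ or $u_{-}=u_{+}=0$ that put $(u_{-},u_{+})$ in the germ outright, after which Proposition~\ref{def:GermPos} concludes; (ii) in the remaining regions, a single corner test (against $(\lambda,0)$ or $(0,-\lambda)$) yields $|u_{-}^{2}-u_{+}^{2}|\geq\lambda^{2}$, which combined with the elementary bound $|v_{-}^{2}-v_{+}^{2}|\leq\lambda^{2}$, valid for every $(v_{-},v_{+})\in\mathcal{G}_{\lambda}^{2}(0)$, gives the desired sign. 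Neither mechanism appears in your plan. If you wish to keep the $\mathcal{G}_{\lambda}^{1}$-based decomposition, you will still need such a preliminary localization of $(a_{-},a_{+})$ before the sign of $R$ can be controlled, at which point you have essentially reproduced the paper's case analysis.
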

\begin{proof}
 Following~\cite{AS12} (see Equations (13) and (14) in this reference), it suffices to show that if 
\begin{equation}~\label{eq:maxG4}
\Xi_{v}((u_{-},u_{+}),(v_{-},v_{+})) \geq 0 \ \ \text{ for any } (v_{-},v_{+})\in \mathcal{G}_{\lambda}^{2}(v),
\end{equation}
 then the stronger following property holds
$$  \Xi_{v}((u_{-},u_{+}),(v_{-},v_{+})) \geq 0 \ \ \text{ for any } (v_{-},v_{+}) \in \mathcal{G}_{\lambda}^{1} \cup \mathcal{G}_{\lambda}^{2}(v). $$
In the sequel we suppose that $v=0$. The general case follows by translation. The two main arguments are first, that Proposition~\ref{def:MaxSub} implies that this is automatically verified if $(u_{-},u_{+})$ belongs to the germ, and second, that for all $(v_{-},v_{+})$ in $\mathcal{G}_{\lambda}^{2}$, $ \left| \frac{v_{-}^{2}- v_{+}^{2}}{2} \right| \leq \frac{\lambda^{2}}{2}$. In the sequel, $(v_{-},v_{+})$ always denotes an element of $\mathcal{G}_{\lambda}^{2}$. We proceed by a tedious, but not difficult, disjunction of cases. 

\begin{itemize}
\item If $u_{-} \geq \lambda$ and $u_{+} \geq 0$, then we want to prove that 
$$\frac{u_{-}^{2}-v_{-}^{2}}{2} -\frac{u_{+}^{2}-v_{+}^{2}}{2} \geq 0. $$
If we apply Equation~\eqref{eq:maxG4} to $(\lambda, 0)$, we obtain that
$$ \frac{u_{-}^{2}-u_{+}^{2}}{2} \geq \frac{\lambda^{2}}{2} $$
and the result follows.

\item If $0 \leq u_{-} \leq \lambda$ and $u_{+} \geq 0$, then $(u_{-},u_{+})$ belongs to the germ. Indeed, Equation~\eqref{eq:maxG4} applied to $(u_{-},0)$ yields $ -\frac{u_{+}^{2}}{2} \geq 0$ and therefore, $u_{+}=0$.

\item If $ u_{-} \leq 0$ and $u_{+} \geq 0$, then $(u_{-},u_{+})$ belongs to the germ. Indeed, Equation~\eqref{eq:maxG4} applied to $(0,0)$ yields 
$$ -\frac{u_{-}^{2}}{2} - \frac{u_{+}^{2}}{2}\geq 0$$ 
and therefore, $u_{-}=u_{+}=0$.

\item If $u_{-} \leq 0$ and $ -\lambda \leq u_{+} \leq 0$, then $(u_{-},u_{+})$ belongs to the germ. Indeed, Equation~\eqref{eq:maxG4} applied to $(0,u_{+})$ yields $ -\frac{u_{-}^{2}}{2} \geq 0$ and therefore, $u_{-}=0$.

\item If $u_{-} \leq 0$ and $  \leq u_{+} \leq -\lambda$, then we want to prove that 
$$-\frac{u_{-}^{2}-v_{-}^{2}}{2} +\frac{u_{+}^{2}-v_{+}^{2}}{2} \geq 0. $$
If we apply Equation~\eqref{eq:maxG4} to $(0, -\lambda)$, we obtain
$$-\frac{u_{-}^{2}}{2} +\frac{u_{+}^{2}-\lambda^{2}}{2} \geq 0. $$
and the result follows.

\item If $0 \leq u_{-} \leq \lambda$ and $ u_{+} \leq -\lambda$, let us first suppose that $u_{-} \geq v_{-}$. We have to prove that
$$ \frac{u_{-}^{2}-v_{-}^{2}}{2} + \frac{u_{+}^{2}-v_{+}^{2}}{2} \geq 0.$$
But $0 \leq v_{-} \leq u_{-}$ and $ 0 \geq v_{+} \geq u_{+}$, and we have the result:
$$ \frac{v_{-}^{2}+v_{+}^{2}}{2} \leq \frac{u_{-}^{2}+v_{+}^{2}}{2} \leq \frac{u_{-}^{2}+u_{+}^{2}}{2} . $$
We now suppose that $u_{-} \leq v_{-}$. We want to prove that
$$-  \frac{u_{-}^{2}-v_{-}^{2}}{2} + \frac{u_{+}^{2}-v_{+}^{2}}{2} \geq 0.$$
Moreover, $(u_{-}, u_{+})$ does not belong to the germ $\mathcal{G}_{\lambda}$, and therefore $u_{+} \leq -u_{-}- \lambda$ and
$$ \frac{u_{+}^{2}-u_{-}^{2}}{2} \geq \frac{2 u_{-} \lambda + \lambda^{2}}{2} \geq \frac{\lambda^{2}}{2} \geq \frac{v_{+}^{2}-v_{-}^{2}}{2}$$

\item If $\lambda \leq u_{-} $ and $ u_{+} \leq -\lambda$, the result
$$ \frac{u_{-}^{2}-v_{-}^{2}}{2} + \frac{u_{+}^{2}-v_{+}^{2}}{2} \geq 0 $$
is a straightforward consequence of
$$ \frac{u_{-}^{2}+u_{+}^{2}}{2} \geq \lambda ^{2} \geq  \frac{v_{-}^{2}+v_{+}^{2}}{2}.  $$

\item Eventually, if $\lambda \leq u_{-} $ and $ - \lambda \leq u_{+} \leq 0$, let us first suppose that $u_{+} \leq v_{+}$ and prove
$$ \frac{u_{-}^{2}-v_{-}^{2}}{2} + \frac{u_{+}^{2}-v_{+}^{2}}{2} \geq 0.$$
It follows from
$$  \frac{v_{+}^{2}+v_{-}^{2}}{2} \leq  \frac{u_{+}^{2}+v_{-}^{2}}{2} \leq  \frac{u_{+}^{2}+u_{-}^{2}}{2}.$$
Suppose now that $u_{+}>v_{+}$ and $ u_{+} \geq -u_{-}+ \lambda$. The result
$$ \frac{u_{-}^{2}-v_{-}^{2}}{2} - \frac{u_{+}^{2}-v_{+}^{2}}{2} \geq 0$$
comes from
$$ \frac{u_{-}^{2}-u_{+}^{2}}{2} \geq  \frac{-2 \lambda u_{+} + \lambda^{2}}{2} \geq  \frac{ \lambda^{2}}{2} \geq  \frac{v_{-}^{2}-v_{+}^{2}}{2}. $$
\end{itemize}

\end{proof}

It is possible to find fluxes that verifies~\eqref{eq:WBMax} with $ \mathcal{H}_{\lambda}=\mathcal{G}_{\lambda}^{1} \cup \mathcal{G}_{\lambda}^{2}$ and~\eqref{gpmmono}.
\begin{prop}
 The family of finite volume schemes defined by
 \begin{equation} \label{FVconsMax}
\begin{cases}
  g^{-}_{\lambda}(u_{-},u_{+},v)=  g(u_{-}, \min(u_{+}+\lambda, \max(u_{-},v)), v) \\
  g^{+}_{\lambda}(u_{-},u_{+},v)= g(\max(u_{-}-\lambda, \min(u_{+},v)), u_{+}, v) 
\end{cases}
\end{equation}
is consistent with $\mathcal{G}_{\lambda}^{1} \cup \mathcal{G}_{\lambda}^{2}(v)$ and verifies the monotonicity assumptions $\partial_{1} g^{\pm}_{\lambda} \geq 0$ and  $\partial_{2} g^{\pm}_{\lambda} \leq 0$.
\end{prop}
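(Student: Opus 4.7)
The proof divides cleanly into two independent tasks: checking consistency on $\mathcal{G}_{\lambda}^{1}\cup\mathcal{G}_{\lambda}^{2}(v)$ and checking the monotonicity of $g^{\pm}_{\lambda}$ in their first two arguments. Both reduce to case analyses on which branch of the inner $\min$/$\max$ is active, combined with the consistency and monotonicity of the background flux $g$.

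For consistency, the plan is to plug in the definitions and see that the inner expression collapses. If $(a,b)\in\mathcal{G}_{\lambda}^{1}$, then $a=b+\lambda$, so $\min(b+\lambda,\max(a,v))=\min(a,\max(a,v))=a$, and $g^{-}_{\lambda}(a,b,v)=g(a,a,v)=a^{2}/2-va$ by~\eqref{gcons}; symmetrically for $g^{+}_{\lambda}$. If $(a,b)\in\mathcal{G}_{\lambda}^{2}(v)$, then $a\geq v$ gives $\max(a,v)=a$, and the condition $a-b<\lambda$ gives $b+\lambda>a$, so again $\min(b+\lambda,\max(a,v))=a$ and the same identity yields $g^{-}_{\lambda}(a,b,v)=a^{2}/2-va$. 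The computation for $g^{+}_{\lambda}$ is entirely symmetric, using $b\leq v$ and $a-\lambda<b$.

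For the monotonicity, I will write $g^{-}_{\lambda}(u_{-},u_{+},v)=g(u_{-},\phi(u_{-},u_{+}),v)$ with $\phi=\min(u_{+}+\lambda,\max(u_{-},v))$ and argue on three regions in $u_{-}$ (with $u_{+},v$ fixed). On $\{u_{-}\leq v\}$ and on $\{u_{-}\geq u_{+}+\lambda\}$, $\phi$ does not depend on $u_{-}$, so nondecrease follows from $\partial_{1}g\geq 0$ in~\eqref{gmono}. On the middle region $\{v\leq u_{-}\leq u_{+}+\lambda\}$, $\phi=u_{-}$ and the composition collapses to $g(u_{-},u_{-},v)=u_{-}^{2}/2-vu_{-}$, whose derivative $u_{-}-v$ is nonnegative exactly because $u_{-}\geq v$ on this region; this collapse-via-consistency is the one step where the naive chain-rule bound $\partial_{1}g+\partial_{2}g$ does not suffice, so it is where I would be most careful. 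Continuity of $\phi$ matches the three regions together, giving global monotonicity in $u_{-}$. For $\partial_{u_{+}}$, the only $u_{+}$-dependence enters through $\phi$ whose $u_{+}$-derivative is in $\{0,1\}$; combined with $\partial_{2}g\leq 0$ this immediately yields that $g^{-}_{\lambda}$ is nonincreasing in $u_{+}$. The argument for $g^{+}_{\lambda}(u_{-},u_{+},v)=g(\psi(u_{-},u_{+}),u_{+},v)$ with $\psi=\max(u_{-}-\lambda,\min(u_{+},v))$ is the mirror image, the critical region being $\{v\leq u_{+}\leq u_{-}-\lambda\}^{c}\cap\{u_{+}\leq v,\ u_{-}-\lambda\leq u_{+}\}$ where $\psi=u_{+}$, so $g^{+}_{\lambda}$ collapses to $g(u_{+},u_{+},v)=u_{+}^{2}/2-vu_{+}$ with $u_{+}-v\leq 0$, yielding the desired $\partial_{u_{+}}\leq 0$.

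The whole proof is elementary case analysis; the only genuine observation is that whenever the inner function is the identity of one argument, consistency of $g$ eliminates the off-diagonal partial derivative term and replaces it by $x\mapsto x^{2}/2-vx$, whose sign of derivative is exactly what the active region of the $\min$/$\max$ enforces. I would present it as a single lemma with a short table of the three regions for each of the four monotonicity properties.
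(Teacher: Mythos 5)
Your proposal is correct and follows essentially the same route as the paper: plug the definitions in, do a case analysis on which branch of the inner $\min$/$\max$ is active, and use the consistency $g(x,x,v)=x^{2}/2-vx$ to collapse the diagonal branch, whose derivative $x-v$ has exactly the sign forced by the active region. If anything, your write-up is slightly more complete than the paper's, which only spells out the consistency check on the edges $u_{-}=v$ and $u_{+}=v$ of $\mathcal{G}_{\lambda}^{2}(v)$ rather than on the whole set, and which contains a small typo in the monotonicity discussion ($\partial_{2}g^{-}_{\lambda}$ where $\partial_{1}g^{-}_{\lambda}$ is meant); your region-by-region treatment fixes both.
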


\begin{proof}
 The proof consists in a simple verification. We first check that for all $u_{-}$ and $u_{+}$ in $\R$,
 $$  g^{-}_{\lambda}(u_{-},u_{-}- \lambda,v)=  g(u_{-}, \min(u_{-}, \max(u_{-},v)), v) = g(u_{-},u_{-},v)$$
 and
 $$ g^{+}_{\lambda}(u_{+}+ \lambda,u_{+},v)= g(\max(u_{+}, \min(u_{+},v)), u_{+}, v) = g(u_{+},u_{+},v).$$
 Then, we verify that for all $u_{+}$ in $[v-\lambda, v]$, 
 $$ g^{-}_{\lambda}(v,u_{+},v)=  g(u_{-}, \min(u_{+}+\lambda, \max(v,v)), v) = g(v,v,v) $$
 and 
 $$   g^{+}_{\lambda}(v,u_{+},v)= g(\max(v-\lambda, \min(u_{+},v)), u_{+}, v) = g(u_{+},u_{+},v) $$
 while for every $u_{-}$ in $[v, v+\lambda]$,
 $$ g^{-}_{\lambda}(u_{-},v, v)=  g(u_{-}, \min(v+\lambda, \max(u_{-},v)), v)= g(u_{-},u_{-},v) $$
 and
 $$ g^{+}_{\lambda}(u_{-},v,v)= g(\max(u_{-}-\lambda, \min(v,v)), v, v)= g(v,v,v).  $$
Eventually, the monotonicity properties are implied by those on $g$ as soon as soon as the first component is not $u_{+}$ and the second is not $u_{-}$. But if the first component is $u_{+}$, then $u_{+} <v$ and $\partial_{2} g^{+}_{ \lambda}= u_{+}-v \leq 0$, while if the second component is $u_{-}$, then $u_{-} >v$ and $\partial_{2} g^{-}_{ \lambda}= u_{-}-v \geq 0$.
\end{proof}


It remains to prove that Assumption~\eqref{dissipativity} holds. This is not the case for every choice of flux $g$ (a counterexample can be found in~\cite{AS12}), but we can check it for three classical fluxes.
\begin{prop} \label{prop:SchemesThatWork}
 The family of finite volume schemes~\eqref{FVconsMax} verifies that $g^{-}_{\lambda}-g^{+}_{\lambda}$ is nondecreasing with respect to its two first variables if $g$ is the Godunov, the Rusanov or the Engquist--Osher numerical flux. 
\end{prop}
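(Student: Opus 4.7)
The proof is by case analysis on the $(u_-, u_+)$-plane with $v$ fixed. Since $g^\pm_\lambda$ is continuous in $(u_-, u_+)$, it suffices to verify $\partial_{u_-}(g^-_\lambda - g^+_\lambda) \geq 0$ and $\partial_{u_+}(g^-_\lambda - g^+_\lambda) \geq 0$ in the interior of the regions on which the inner $\min/\max$ in $\phi^- := \min(u_+ + \lambda, \max(u_-, v))$ and $\phi^+ := \max(u_- - \lambda, \min(u_+, v))$ are resolved. These regions are cut out by the lines $u_- = v$, $u_+ = v$, $u_+ + \lambda = v$, $u_- - \lambda = v$, and $u_- - u_+ = \lambda$. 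Within each of the finitely many non-empty regions, $\phi^-$ and $\phi^+$ each reduce to one of the linear expressions $u_-$, $u_+$, $u_+ + \lambda$, $u_- - \lambda$, or $v$.

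The easy regions dispatch uniformly across the three fluxes. When $\phi^- = u_-$ and $\phi^+ = u_+$, consistency of $g$ yields $g^-_\lambda - g^+_\lambda = f_v(u_-) - f_v(u_+)$; the monotonicity in each variable then follows from the sign conditions $u_- \geq v$, $u_+ \leq v$ characterizing this region. When one of $\phi^\pm$ equals $v$, the difference is a function of a single one of the $u_\pm$, and the monotonicity follows directly from the monotonicity of $g$. The nontrivial regions are those where $\phi^-$ and $\phi^+$ equal one of the shifted expressions $u_+ + \lambda$ or $u_- - \lambda$; here
\[
g^-_\lambda - g^+_\lambda = g(u_-, u_+ + \lambda, v) - g(u_- - \lambda, u_+, v)
\]
(or a variant in which some arguments are replaced by $v$), and the verification requires a flux-specific argument.

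For Engquist--Osher the additive splitting $g_{EO}(a,b,v) = f_v(\max(a,v)) + f_v(\min(b,v)) - f_v(v)$ separates the dependence on $a$ and $b$, and in each sub-region the partial derivatives of the difference evaluate to $0$ or $\lambda$ after a brief substitution. For Rusanov, $g_R(a,b,v) = (f_v(a)+f_v(b))/2 - C(b-a)/2$, the linear dissipation term cancels in the shifted difference, leaving half-differences of $f_v$ at points separated by $\lambda$; the remaining regions reduce to short inequalities linking $C$, $\lambda$, and $|u_\pm - v|$, which hold on the compact range supplied by the $L^\infty$ bound of Proposition~\ref{prop:FluidBound}. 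For Godunov, the piecewise structure (depending on the ordering of the arguments and on whether $v$ lies between them) further subdivides each region, but in each sub-region $g_{Go}$ collapses to $f_v$ at one of its arguments or at $v$, so the partial derivatives again come out as $0$ or $\lambda$.

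The main obstacle is the bookkeeping, particularly for the Godunov flux, where the clipping structure of $\phi^\pm$ and the internal piecewise structure of $g_{Go}$ combine to produce several layers of subcases. Each individual check is a routine one-line derivative computation; the real effort lies in enumerating the regions and subcases systematically and verifying that no case where the monotonicity could fail is overlooked.
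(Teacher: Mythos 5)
Your overall strategy --- partition the $(u_-,u_+)$-plane according to how the clipping in $g^{\pm}_{\lambda}$ resolves, dispatch the easy regions by the general monotonicity and consistency hypotheses on $g$, and reserve a flux-specific computation for the rest --- is exactly the paper's. But your classification of which regions are ``nontrivial'' has a gap, and it sits precisely where the substance of the proof lies. You list as hard only the configuration where \emph{both} clippings produce a shifted argument, i.e.\ $g^{-}_{\lambda}-g^{+}_{\lambda}=g(u_-,u_++\lambda,v)-g(u_--\lambda,u_+,v)$, plus variants with $v$. You omit the mixed regions where one flux is in its consistency regime while the other is shifted: in the notation of the paper's proof, region $I/3$, where
\[
(g^{-}_{\lambda}-g^{+}_{\lambda})(u_-,u_+,v)=f_{v}(u_-)-g(u_--\lambda,u_+,v),
\]
and the symmetric region $III/1$. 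There $\partial_{u_-}(g^{-}_{\lambda}-g^{+}_{\lambda})=(u_--v)-\partial_{1}g(u_--\lambda,u_+,v)$, whose sign is \emph{not} a consequence of consistency and monotonicity of $g$: for a Lax--Friedrichs-type flux $g(a,b,v)=\tfrac{1}{2}(f_{v}(a)+f_{v}(b))-\tfrac{C}{2}(b-a)$ with a global constant $C$, this derivative equals $\tfrac{1}{2}(u_--v+\lambda-C)$ and is negative as soon as $C>u_--v+\lambda$. So these mixed regions genuinely require the flux-by-flux verification, and they are the ones the paper spends its proof on; the ``both shifted'' region that you do flag is exactly the scheme of~\cite{AS12}, for which the paper simply cites the known result.

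The Lax--Friedrichs example also shows that your mechanism for the Rusanov case points the wrong way: enlarging the admissible range via the $L^{\infty}$ bound makes a dissipation coefficient \emph{larger}, hence the required inequality harder, not easier. What actually saves the Rusanov flux in region $I/3$ is the combination of the local form of its dissipation coefficient with the defining constraints of the region itself ($u_-\le u_++\lambda$ from zone $I$, $u_+\ge u_--\lambda$ from zone $3$); this is how the paper concludes there (with $v=0$): $\partial_{u_-}(g^{-}_{\lambda}-g^{+}_{\lambda})=\tfrac{1}{2}(-u_-+3\lambda+u_+)\ge\lambda$. Your treatment of the genuinely easy regions and of Godunov and Engquist--Osher (where the derivatives do come out as $0$ or $\lambda$ in region $I/3$) is sound; the missing piece is the explicit identification and verification of the mixed regions.
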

\begin{proof}
 Let us divide the phase space $(u_{-},u_{+})$ in six zones, depending on which values are taken by $g^{-}$ and $g^{+}$:
 $$ g^{-}_{\lambda}(u_{-},u_{+},v)= \left\{
\begin{array}{lll}
 g(u_{-},u_{-},v) & \text{ if } v \leq u_{-} \leq u_{+}+ \lambda 	&\text{ zone I,} \\
 g(u_{-},v, v) & \text{ if } u_{-} \leq v \leq u_{+}+ \lambda 	&\text{ zone II,} \\ 
 g(u_{-},u_{+}+\lambda,v) & \text{ if }  u_{+}+ \lambda \leq \max(u_{-},v)	&\text{ zone III,} 
\end{array}
\right.
$$
while
 $$ g^{+}_{\lambda}(u_{-},u_{+},v)= \left\{
\begin{array}{lll}
 g(u_{+},u_{+},v) & \text{ if } u_{-}- \lambda \leq u_{+} \leq v	&\text{ zone $1$,} \\
 g(v,u_{+}, v) & \text{ if } u_{-}- \lambda \leq v \leq u_{+} 		&\text{ zone $2$,} \\ 
 g(u_{-}-\lambda,u_{+},v) & \text{ if }  \min(u_{+},v) \leq u_{-}- \lambda	&\text{ zone $3$.} 
\end{array}
\right.
$$
\begin{psfrags}
 \psfrag{u_{-}}{$u_{-}$}
 \psfrag{u_{+}}{$u_{+}$}
 \psfrag{v}{$v$}
 \psfrag{v-l}{$v-\lambda$}
 \psfrag{v+l}{$v+\lambda$}
 \psfrag{I+1}{$I/1$}
 \psfrag{I+2}{$I/2$}
 \psfrag{I+3}{$I/3$}
 \psfrag{II+1}{$II/1$}
 \psfrag{III+1}{$III/1$}
 \psfrag{III+3}{$III/3$}
 \psfrag{II+2}{$II/2$}
 \psfrag{IV+2}{$IV+2$}
 \psfrag{II+4}{$II+4$}
 \psfrag{IV+4}{$IV+4$}
 \psfrag{IV+3}{$IV+3$}
 \psfrag{III+4}{$III+4$}
\begin{figure}[h!]
\centering
 \includegraphics[width=0.6\linewidth]{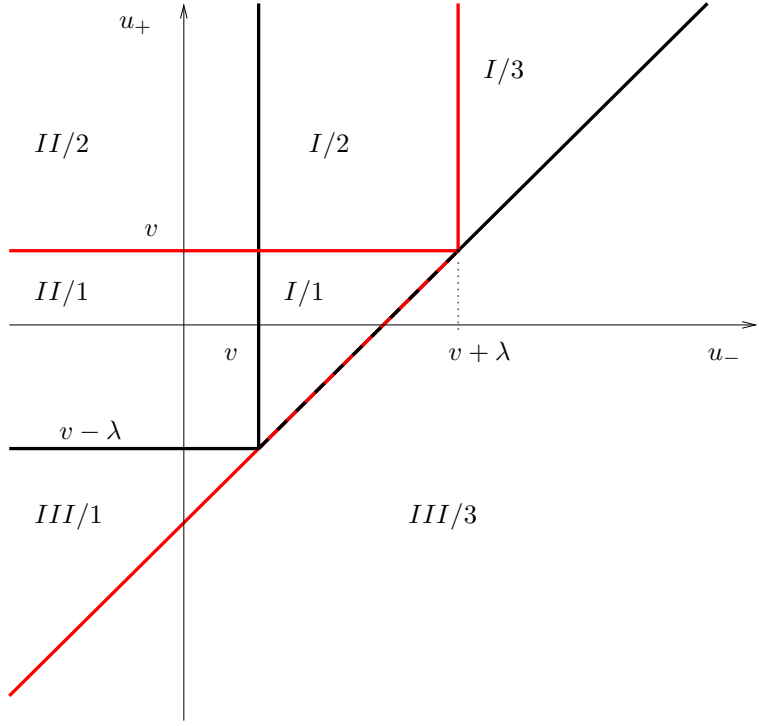}
 \caption{Choice of the fluxes in the family of finite volume schemes~\eqref{FVconsMax}.} \label{fig:zones}
\end{figure}
\end{psfrags}
These zones are depicted on Figure~\ref{fig:zones}. If $u_{+}$ belongs to zones $1$ or $2$, $g^{+}$ does not depends on $u_{-}$ and $g^{-}_{\lambda}-g^{+}_{\lambda}$ is nondecreasing with respect to its first argument. Similarly, if $u_{-}$ belongs to zones $I$ or $II$,  $g^{-}_{\lambda}-g^{+}_{\lambda}$ is nondecreasing towards its second argument. We focus on the case where $u_{-}$ belongs to zone $III$ or $u_{+}$ belongs to zone $3$. Let us first remark that the case where $u_{-}$ belongs to zone $III$ and $u_{+}$ is in zone $3$ reduces to the choice of flux studied in~\cite{AS12}, where the monotonicity property has been proven for the Godunov, Rusanov and Engquist--Osher scheme. Suppose that case $u_{-}$ is in zone $I$ and $u_{+}$ is in zone $3$.  Then we have
$$( g^{-}_{\lambda}-g^{+}_{\lambda})(u_{-},u_{+},v)= g(u_{-}, u_{-}, v) - g(u_{-}-\lambda, u_{+},v). $$
For the sake of simplicity we assume that $v=0$.
\begin{itemize}
 \item If $g$ is the Godunov flux, as $u_{+}+\lambda \geq u_{-} \geq \lambda$, the Riemann problem between $u_{-}-\lambda$ and $u_{+}$ is a shock traveling faster than $v$. It follows that $$( g^{-}_{\lambda}-g^{+}_{\lambda})(u_{-},u_{+},0)= \frac{(u_{-})^{2}}{2}- \frac{(u_{-}- \lambda)^{2}}{2}= \lambda u_{-}- \frac{\lambda^{2}}{2}$$
is nondecreasing toward its first two arguments. 
\item If $g$ is the Rusanov flux,
$$( g^{-}_{\lambda}-g^{+}_{\lambda})(u_{-},u_{+},0)= \frac{(u_{-})^{2}}{2}-\left( \frac{(u_{-}- \lambda)^{2}+ u_{+}^{2}}{4} - (u_{-}- \lambda) \frac{u_{+}-(u_{-}-\lambda)}{2} \right)$$
and we have 
$$ 
\begin{aligned}
 \partial_{1}( g^{-}_{\lambda}-g^{+}_{\lambda})(u_{-},u_{+},0) &= u_{-}- \left( \frac{u_{-}- \lambda}{2} -  \frac{u_{+}-(u_{-}-\lambda)}{2} + \frac{u_{-}-\lambda}{2}\right) \\
 	&= \frac{-u_{-}+3 \lambda+u_{+}}{2}. 
\end{aligned}
$$
As $u_{-}$ belongs to zone $I$,  $u_{+}+ \lambda \geq u_{-}$, and the last quantity is larger than $\lambda$. On the other hand,
$$  \partial_{2}( g^{-}_{\lambda}-g^{+}_{\lambda})(u_{-},u_{+},0) = - \frac{u_{+}-(u_{-}- \lambda)}{2}$$
and this last quantity is nonnegative because $u_{+}$ belongs to zone $3$. 

\item Eventually, if $g$ is the Engquist--Osher scheme, the fact that $0 \leq u_{-}-\lambda \leq u_{+}$ implies that 
$$ ( g^{-}_{\lambda}-g^{+}_{\lambda})(u_{-},u_{+},0)= \frac{(u_{-})^{2}}{2} - \frac{(u_{-}- \lambda)^{2}}{2} = \lambda u_{-} - \frac{\lambda^{2}}{2}$$
is once again nondecreasing with respect to its first two arguments. The case where $u_{-}$ is in zone $III$ while $u_{+}$ is in zone $1$ can be treated in a symmetrical way.

\end{itemize}

%

\end{proof}

\section{Convergence of schemes only consistent with $\mathcal{G}_{\lambda}^{1}$} \label{S:ConvLine}
In this section, we no longer require Hypothesis~\eqref{eq:WBMax} to be fulfilled, and prove convergence of a family of finite volume schemes that verifies only~\eqref{eq:WBG1}. The difficulty is that $\mathcal{G}_{\lambda}^{1}$ is not a maximal part of the germ, and we cannot prove a discrete version of~\eqref{eq:DefFluid1} directly. The key point is to study the convergence of the solution of Scheme~\eqref{eq:CoupledScheme} for initial data in the maximal subset of the germ $\mathcal{G}_{\lambda}^{1} \cup \mathcal{G}_{\lambda}^{2}$. We then extend the comparison argument of~\cite{AS12} to prove convergence for arbitrary initial data.
\subsection{Proof of convergence} \label{SG1}
Let us now focus on fluxes that do not preserve a maximal part of the germ (in the sense of Hypothesis~\eqref{eq:WBMax}), but only the straight line $\mathcal{G}_{\lambda}^{1}$, i.e. that verifies~\eqref{eq:WBG1} but not~\eqref{eq:WBMax}. Our aim is to prove the following theorem.
\begin{thm} \label{thm:convG1}
 If the numerical fluxes around the particle are given by
 \begin{equation} \label{eq:DragSchemeLambda}
\begin{cases}
f_{1/2,-}^{n}(u_{0}^{n},u_{1}^{n}, v^{n})= g(u_{0}^{n}, u_{1}^{n}+ \lambda, v^{n}), \\
f_{1/2,+}^{n}(u_{0}^{n},u_{1}^{n}, v^{n})= g(u_{0}^{n}-\lambda, u_{1}^{n}, v^{n}),
\end{cases}
\end{equation}
where $g$ is a numerical flux verifying~(\ref{gcons}-\ref{eq:WBG1}) and ~(\ref{eq:WBTip}-\ref{dissipativity}), and if the CFL condition~\eqref{eq:CFL} holds, Scheme~\eqref{eq:CoupledScheme} converges toward the solution of~\eqref{eq:CauchyPb}.
\end{thm}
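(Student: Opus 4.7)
The a priori estimates of Section~\ref{S:Extract} (Propositions~\ref{prop:FluidBound}, \ref{prop:PartBound}, \ref{thm:Extraction}) were derived under~\eqref{eq:WBG1} alone, so they apply here and yield subsequential limits $u_{\Delta t}\to u$ in $L^1_{loc}$, $h_{\Delta t}\to h$ in $W^{1,\infty}_{loc}$. Likewise, the Lax--Wendroff argument on cells $j\le -1$ and $j\ge 2$ shows that $u$ is an entropy solution of the Burgers equation on $\{x<h(t)\}$ and $\{x>h(t)\}$, and the proof of the momentum equation~\eqref{eq:DefPart} in Section~\ref{S:ConvMax} uses only the conservative form of~\eqref{eq:CoupledScheme} and consistency~\eqref{gcons}, so $h$ satisfies~\eqref{eq:DefPart} as well. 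The entire issue is therefore reduced to proving that $(u_{-}(t),u_{+}(t))\in\mathcal{G}_{\lambda}(h'(t))$ for almost every~$t$, for a scheme that is consistent only with the one-dimensional part $\mathcal{G}_\lambda^1$ of the germ.

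\paragraph{Step 1: discrete entropy inequality for $\mathcal{G}_\lambda^1$-data.} For any $(c_-,c_+)\in\mathcal{G}_\lambda^1$, i.e.\ $c_-=c_++\lambda$, the structure~\eqref{eq:DragSchemeLambda} collapses the interface fluxes to
$$
f^{n}_{1/2,-}(a,b,v)=g(a,b+\lambda,v),\qquad f^{n}_{1/2,+}(a,b,v)=g(a-\lambda,b,v),
$$
so that after the translations $u_1^n\mapsto u_1^n+\lambda$ on the right of the particle and $u_0^n\mapsto u_0^n-\lambda$ on its left, the numerical update at cells $0$ and $1$ becomes \emph{exactly} the standard monotone scheme for Burgers with flux $g$ and moving velocity $v^n$. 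Mimicking the manipulations of Proposition~\ref{prop:EntIneq2Prel}, this yields a discrete Kruzhkov inequality with \emph{zero} interface defect for all $(c_-,c_+)\in\mathcal{G}_\lambda^1$. Passing to the limit as in Section~\ref{S:ConvMax} produces~\eqref{eq:DefFluid1} for all $(c_-,c_+)\in\mathcal{G}_\lambda^1$.

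\paragraph{Step 2: enlarging the class of test states to $\mathcal{G}_\lambda^1\cup\mathcal{G}_\lambda^2(v)$.} Here is the crux. Fix $v\in\R$ and $(c_-,c_+)\in\mathcal{G}_\lambda^2(v)$; by definition $v\le c_-\le v+\lambda$ and $v-\lambda\le c_+\le v$, so $c_-\ne c_++\lambda$ in general and Step~1 does not cover this case. Following the strategy introduced in~\cite{AS12}, I would compare the numerical solution $u_{\Delta t}$ to the piecewise constant state $c$ by applying the doubling of variables technique on the straightened equation~\eqref{eq:redressement}: for the frozen particle velocity $v=h'(t_0)$ at a Lebesgue point $t_0$, the dissipation hypothesis~\eqref{dissipativity} guarantees that the interface term arising in the Kruzhkov calculation has the right sign, while the local Lipschitz continuity~\eqref{gpmLip} controls the residual due to the non-consistency of $g^{\pm}_\lambda$ with $(c_-,c_+)\in\mathcal{G}_\lambda^2(v)$. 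The key identity is
$$
g(a,b+\lambda,v)-g(a-\lambda,b,v)\ge 0\quad\text{whenever }(a,b)\in\mathcal{G}_\lambda^2(v),
$$
which encodes the dissipativity at the interface; combined with the BV bound~\eqref{eq:TVDBound} of the fluid, it shows that the numerical boundary layer at cells $\{0,1\}$ generated by $\mathcal{G}_\lambda^2$-data has contribution $O(\Delta x)$ and therefore vanishes in the limit. This extends the limiting inequality~\eqref{eq:DefFluid1} to all $(c_-,c_+)\in\mathcal{G}_\lambda^1\cup\mathcal{G}_\lambda^2(h'(t_0))$ with zero right-hand side.

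\paragraph{Step 3: conclusion.} Since by Proposition~\ref{prop:MaxSub} the set $\mathcal{G}_\lambda^1\cup\mathcal{G}_\lambda^2(v)$ is a maximal subset of $\mathcal{G}_\lambda(v)$, Proposition~\ref{def:GermPos} applied pointwise in $t$ gives $(u_-(t),u_+(t))\in\mathcal{G}_\lambda(h'(t))$ for a.e.\ $t$. Together with Steps 1 and the matter-of-fact passages for the Burgers equation and the particle ODE, this shows $(u,h)$ is a solution of~\eqref{eq:CauchyPb}. Uniqueness from~\cite{ALST13} upgrades subsequential convergence to convergence of the full family. The main obstacle is Step~2: the non-consistency of~\eqref{eq:DragSchemeLambda} with $\mathcal{G}_\lambda^2$ means that the discrete inequality of Step~1 acquires an interface defect that must be shown to vanish, and this requires a careful quantitative use of~\eqref{dissipativity} together with the adapted BV estimate~\eqref{eq:TVDBound}, exactly the mechanism introduced in~\cite{AS12} for the motionless particle and which must now be transported to a time-dependent interface.
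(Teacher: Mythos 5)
Your overall architecture (a priori bounds and interior entropy inequalities carry over; reduce everything to showing the traces lie in the germ; test against $\mathcal{G}_{\lambda}^{1}$, then against $\mathcal{G}_{\lambda}^{2}(h'(t_0))$, and conclude by maximality via Proposition~\ref{prop:MaxSub}) matches the paper, and your Steps~1 and~3 are essentially correct. The genuine gap is in Step~2, which you yourself identify as the crux but for which the mechanism you propose does not work. First, the ``key identity'' $g(a,b+\lambda,v)-g(a-\lambda,b,v)\geq 0$ for $(a,b)\in\mathcal{G}_{\lambda}^{2}(v)$ is false: this quantity is exactly $f^{n}_{1/2,-}-f^{n}_{1/2,+}=\tfrac{m_p}{\Delta t}(v^{n+1}-v^{n})$, i.e.\ the discrete drag, which has no definite sign on $\mathcal{G}_{\lambda}^{2}(v)$. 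For the Godunov flux with $v=0$, $\lambda=1$, $(a,b)=(0.1,-0.8)\in\mathcal{G}_{1}^{2}(0)$, one gets $g(0.1,0.2,0)-g(-0.9,-0.8,0)=f_0(0.1)-f_0(-0.8)=0.005-0.32<0$. Second, even setting that aside, the interface defect obtained when testing the discrete Kruzhkov inequality against a \emph{fixed} state $(c_-,c_+)\in\mathcal{G}_{\lambda}^{2}(v)\setminus\mathcal{G}_{\lambda}^{1}$ is proportional to $\dist_{1}((c_-,c_+),\mathcal{G}_{\lambda}^{1})$, a fixed positive number; after multiplying by $\varphi_0^n$ and summing, this produces an $O(1)$ contribution (not $O(\Delta x)$), and no combination of~\eqref{gpmLip},~\eqref{dissipativity} and the $BV$ bound~\eqref{eq:TVDBound} makes it vanish, precisely because the scheme genuinely does not preserve $\mathcal{G}_{\lambda}^{2}$ data.

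The paper's actual resolution of Step~2 is structurally different and is missing from your proposal: instead of comparing $u_j^n$ with the fixed constant $c$, one compares it with a \emph{second discrete solution} $(c_j^n)$ obtained by running the same interface scheme~\eqref{eq:OneWayScheme} from the initial profile $c_-\mathbf{1}_{j\le 0}+c_+\mathbf{1}_{j\ge 1}$ at time $t_0-\delta$. Since both sequences solve the same monotone scheme, the Kruzhkov comparison has \emph{zero} interface defect. The burden then shifts to proving that $(c_j^n)$ converges back to the unperturbed profile $c_-\mathbf{1}_{x<h(t)}+c_+\mathbf{1}_{x>h(t)}$ on $(t_0-\delta,t_0+\delta)$. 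This requires (i) the ordering and confinement Lemma~\ref{lemma:PropScheme} ($c_j^n$ stays monotone, in $[c_-,c_-+\lambda]$ on the left and $[c_+-\lambda,c_+]$ on the right, with $c_0^n-c_1^n\le\lambda$), and (ii) the identification of the limit as the solution of the initial--boundary value problem~\eqref{eq:BLN} in the Bardos--LeRoux--N\'ed\'elec sense, where the boundary condition at $x=h(t)$ is \emph{inactive} because $(c_-,c_+)$ lies in the open set $\mathcal{G}_{\lambda}^{2}(h'(t))$, so that $c_+<h'(t)<c_-$ on the whole time window. None of this appears in your Step~2, so as written the argument does not close.
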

\begin{proof}
Let us first remark that Proposition~\ref{prop:FluidBound}  and Proposition~\ref{prop:PartBound} did not use Hypothesis~\eqref{eq:WBMax}, thus we can extract converging subsequences as we did in the previous Section. Now, consider a test function $\varphi$ supported in $\{x<0\}$ or $\{x>0\}$, we have $\varphi_{0}^{n}=\varphi_{1}^{n}=0$ for small enough $\Delta x$. We easily obtain, as in Proposition~\ref{prop:EntIneq2Prel}, that for all $c$ in $\R$, for all $j \leq -1$,
$$ \frac{|u_{j}^{n-1}-c|-|u_{j}^{n}-c|}{\Delta t} + \frac{G_{j+1/2}^{n}- G_{j-1/2}^{n}}{\Delta x} \leq 0. $$ 
Multiplying by $\Delta t \Delta x \varphi_{j}^{n}$ and summing over $n \in \N$ and $j \leq -1$, we obtain as in Proposition~\ref{prop:EI2}
 \begin{equation*}
 \Delta t \Delta x \sum_{j \in \Z, n \leq  -1} |u_{j}^{n+1}-c| \frac{\varphi_{j}^{n+1}-\varphi_{j}^{n}}{\Delta t} + \Delta x \sum_{i \in \Z} |u_{j}^{0}-c| \varphi_{j}^{0} + \Delta t \Delta x \sum_{j \in \Z^{*}, n \leq -1} G_{j+1/2}^{n} \frac{\varphi_{j+1}^{n}-\varphi_{j}^{n}}{\Delta x} \geq 0
\end{equation*}
and we straightforwardly obtain that the limit $u$ of the scheme is an entropy solution of the Burgers equation on the sets $\{x<h\}$ (and similarly on $\{x>h\}$). 
 It remains to prove that the traces around the particle belong to the germ for almost every time. Let us fix a time $t_{0}$ such that $h'$ and the traces $u_{-}(t_{0})$ and $u_{+}(t_{0})$ exist. Fix $(c_{-},c_{+})$ in $\mathcal{H}_{\lambda}(h'(t_{0}))$. Our aim is to prove a discrete version of~\eqref{eq:DefFluid1}.
 Let us first suppose that $(c_{-}, c_{+})$ belongs to the straight line $\mathcal{G}_{\lambda}^{1}$ but not to the closed square $\overline{\mathcal{G}_{\lambda}^{2}(h'(t_{0}))}$. By continuity of $h'$, there exists $\delta>0$ such that,
$$\forall t \in (t_{0}-\delta, t_{0}+ \delta), \ \dist_{1}((c_{-},c_{+}), \mathcal{G}_{\lambda}^{1}) =  \dist_{1}((c_{-},c_{+}), \mathcal{H}_{\lambda}^{1}(h'(t)))$$
(see Figure~\ref{F:zones2}).
Up to taking a smaller $\delta$, this equality is also true at the numerical level for small enough $\Delta t$, since from Proposition~\ref{thm:Extraction}, $(v_n)_{n\in\N}$ converges. Therefore, passing to the limit in~\eqref{eq:EI2} with $\varphi$ supported in time in $(t_{0}-\delta, t_{0}+\delta)$, we directly obtain~\eqref{eq:DefFluid1}.

We now treat the case where $(c_{-},c_{+})$ belongs to the interior of $\mathcal{G}_{\lambda}^{2}(h'(t_{0}))$. The principle of the proof is to compare the numerical solution with another one, for which the initial data is much simpler as it corresponds to an element of $\mathcal{G}_{\lambda}^{2}(h'(t_{0}))$. Since $h'$ is continuous, there exists $\delta $ such that
$$ \forall t \in (t_{0}-\delta, t_{0}+ \delta), \ (c_{-},c_{+}) \in \mathcal{G}_{\lambda}^{2}(h'(t)) $$
and on the time interval $(t_{0}-\delta, t_{0}+ \delta)$,~\eqref{eq:DefFluid1} becomes
\begin{equation} \label{eq:DefFluid2}
  \dis \int_{\R_{+}} \int_{\R}  |u-c|(s,x) \partial_{t} \varphi (s,x-h(s))+  \Phi_{h'(t)}(u,c)(s,x) \partial_{x} \varphi(s,x-h(s)) dx\, ds \geq 0.
\end{equation}
Up to reducing $\delta$ and for small enough $\Delta t$, this is also true at the numerical level.
Now, for $(u_{j}^{n})_{j \in \Z, n \in \N}$ and $(v^{n})_{n \in \N}$ given by the fully coupled scheme~\eqref{eq:CoupledScheme}, consider $(c_{j}^{n})_{j \in \Z, n \in \N^{*}}$  the sequence given by the scheme
\begin{equation} \label{eq:OneWayScheme}
 \begin{cases}
 c_{j}^{n+1} &= c_{j}^{n}- \mu (g(c_{j}^{n}, c_{j+1}^{n},v^{n}) - g(c_{j-1}^{n}, c_{j}^{n},v^{n}))  \text{ for $j \notin \{0, 1\}$}, \\
 c_{0}^{n+1} &= c_{0}^{n}- \mu (g(c_{0}^{n}, c_{1}^{n}+ \lambda,v^{n}) - g(c_{-1}^{n}, c_{0}^{n},v^{n}) ), \\
 c_{1}^{n+1} &= c_{1}^{n}- \mu (g(c_{1}^{n}, c_{2}^{n},v^{n}) - g(c_{0}^{n}- \lambda, c_{1}^{n},v^{n})),
\end{cases}
\end{equation}
with initial data
\begin{equation} \label{eq:OWSInitial}
  c_{j}^{0}= 
\begin{cases}
 c_{-} & \text{ if } j \leq 0, \\
 c_{+} & \text{ if } j > 0.
\end{cases}
\end{equation}
We recall that $(c_{-},c_{+})$  belongs to $\mathcal{G}_{\lambda}^{2}(h'(t_{0}))$.
Simple modifications of Propositions~\ref{prop:EntIneq2Prel} and~\ref{prop:EI2} yield
$$
 \begin{aligned}
 \Delta t &\Delta x \sum_{j \in \Z, n \in \N} |u_{j}^{n+1}-c_{j}^{n+1}| \frac{\varphi_{j}^{n+1}-\varphi_{j}^{n}}{\Delta t} + \Delta x \sum_{i \in \Z} |u_{j}^{0}-c_{j}^{0}| \varphi_{j}^{0} \\
 & + \Delta t \Delta x \sum_{j \in \Z^{*}, n \in \N} G_{j+1/2}^{n} \frac{\varphi_{j+1}^{n}-\varphi_{j}^{n}}{\Delta x}  + \Delta t \Delta x \sum_{ n \in \N} G_{j+1/2,+}^{n} \frac{\varphi_{1}^{n}-\varphi_{0}^{n}}{\Delta x} \geq 0.
\end{aligned}
$$
Suppose that $(c_{j}^{n})_{j \in \Z, n \in \N}$ converges to $c(t,x) = c_{-} \mathbf{1}_{x<h(t)} + c_{+} \mathbf{1}_{x>h(t)} $ on the interval $( t_{0}-\delta, t_{0}+ \delta )$. Then with $\varphi_{j}^{n}= \varphi(t^{n}, x_{j}^{n})$ where $\varphi$ is a test function supported in $(t_{0}-\delta, t_{0}+ \delta)$, we obtain~\eqref{eq:DefFluid2} by passing to the limit. We now study this convergence.

\begin{lemma} \label{lemma:PropScheme}
Suppose that at iteration $n$, the sequence $(c_{j}^{n})_{j \in \Z}$ given by the scheme~\eqref{eq:OneWayScheme} is nondecreasing on $j \leq 0$ and on $j \geq 1$, and such that 
$$ \forall j \leq  0, \, c_{-} \leq c_{j}^{n} \leq c_{-}+ \lambda \ \ \text{ and } \ \   \forall j \geq 1, \,  c_{+}- \lambda \leq c_{j}^{n} \leq c_{+} $$
and
$$ c_{0}^{n}-c_{1}^{n} \leq \lambda, $$
then the same holds at iteration $n+1$.
\end{lemma}
\begin{proof}
 The monotonicity of $(c_{j}^{n+1})_{j \leq 0}$ follows from the monotonicity of $H_{\lambda}$ under the CFL condition~\eqref{eq:CFL}. For $j \leq -2$, we have
 $$ c_{j}^{n+1} = H_{\lambda}(c_{j-1}^{n}, c_{j}^{n}, c_{j+1}^{n}) \leq H_{\lambda}(c_{j}^{n}, c_{j+1}^{n}, c_{j+2}^{n}) = c_{j+1}^{n+1}. $$
As $c_{0}^{n} \leq c_{1}^{n}+ \lambda$, we also have
  $$ c_{-1}^{n+1} = H_{\lambda}(c_{-2}^{n}, c_{-1}^{n}, c_{0}^{n}) \leq H_{\lambda}(c_{-1}^{n}, c_{0}^{n}, c_{1}^{n}+\lambda) = c_{0}^{n+1}. $$
 Moreover, for $j \leq- 1$, both $c_{j-1}^{n}$, $c_{j}^{n}$ and $c_{j+1}^{n}$ are between $c_{-}$ and $c_{-}+ \lambda$, thus the same holds at iteration $n+1$. For $j=0$, as $c_{+} \leq c_{-}$ (because $(c_{-},c_{+})$ belongs to $\mathcal{G}_{\lambda}^{2}(h'(t_{0}))$), we conclude by remarking that
 $$c_{-} \leq c_{0}^{n} \leq c_{1}^{n} + \lambda \leq c_{+} + \lambda \leq c_{-} + \lambda.$$
 The results for positive integers $j$ are obtained in a similar way. Let us now prove that $u_{0}^{n+1}- u_{1}^{n+1} \leq \lambda$. We have
 \begin{align*}
 c_{0}^{n+1}- c_{1}^{n+1} &= H_{\lambda}(c_{-1}^{n}, c_{0}^{n}, c_{1}^{n}+ \lambda) - H_{\lambda}(c_{0}^{n} - \lambda, c_{1}^{n}, c_{2}^{n}) \\
 	& \leq H_{\lambda}(c_{0}^{n}, c_{0}^{n}, c_{1}^{n}+ \lambda) - H_{\lambda}(c_{0}^{n}- \lambda, c_{1}^{n}, c_{1}^{n}) \\
	& \leq c_{0}^{n} + \mu L | c_{0}^{n}- (c_{1}^{n}+ \lambda) | -c_{1}^{n }+ \mu L | (c_{0}^{n}- \lambda)- c_{1}^{n} | \\
	& \leq c_{0}^{n}- c_{1}^{n} + (c_{1}^{n} + \lambda - c_{0}^{n}) \\
	& \leq \lambda.
\end{align*}

\end{proof}
For $(c_{-},c_{+})$ in the \emph{open} subset $\mathcal{G}_{\lambda}^{2}(h'(t_{0}))$, there exists a positive $\delta$ such that $h'(t)$ stays in the interval $(c_{+}, c_{-})$ on the time interval $(t_{0}- \delta, t_{0}+ \delta)$. For small enough $\Delta t$, it is also true at the numerical level. 
Up to reducing slightly $\delta$, $(c_{-},c_{+})$ belongs to $\mathcal{G}_{\lambda}^{2}(v^{n})$ for small enough $\Delta t$ and for all iteration in time such that $t^{n}$ belongs to $(t_{0}-\delta, t_{0}+ \delta)$, and in particular $c_{+} \geq v^{n} \geq c_{-}$.

Thus the limit $c$ of the scheme~\eqref{eq:OneWayScheme} with initial data~\eqref{eq:OWSInitial} at time $t_{0}- \delta$ is such that $c$ is larger than $h'$ on $x<h$ and smaller on $x>h$. It allows to prove that $c$ is, on $\{ (t,x): x<h(t)\}$, the solution of 
\begin{equation} \label{eq:BLN}
 \begin{cases}
 \partial_{t} u + \partial_{x} \frac{u^{2}}{2} =0 &\forall t \in (t_{0}- \delta, t_{0}+ \delta), \forall x<h(t) , \\
 u(t_{0}- \delta ,x)= c_{-} &\forall x<h(0) \\
 u(t,h(t))=h'(t) &\forall t \in (t_{0}- \delta, t_{0}+ \delta).
\end{cases}
\end{equation}
As $c_{-}$ is larger than $h'$ on the whole time interval, the boundary condition is inactive and the solution is $u=c_{-}$. Let us recall the definition given by Bardos, LeRoux and Nedelec in~\cite{BLN79} of this conservation law on a bounded domain. A function $u$ in $L^{\infty}$ is a solution of
$$ 
\begin{cases}
 \partial_{t} u + \partial_{x} f(u) = 0 & \forall t>0, \forall x<h(t), \\
 u(t=0,x)= u^{0}(x) &  \forall x<h(0), \\
 u(t, h(t))= u_{b}(t) & \forall t>0, 
\end{cases}
$$
 if for all real $\kappa$ and for all nonnegative function $\varphi \in \mathcal{C}_{0}^{\infty}(\R_{+} \times \R)$, the following inequality holds:
\begin{equation} \label{eq:RPBoundary}
\begin{aligned} 
  \int_{t>0} & \int_{x<h(t)}  |u(t,x)-\kappa| \partial_{t} \varphi(t,x-h(t)) + \Phi_{h'(t)}(u(t,x),\kappa) \partial_{x} \varphi(t,x-h(t)) dx \, dt  \\ &+ \int_{x<h(0)} |u^{0}(x)- \kappa| \varphi(0,x) dx 
   + \int_{t>0} \sign( \kappa-u_{b}(t))\{ f(u(t,h(t)^{-}))-f(\kappa)\} \varphi(t,0) \geq 0.
\end{aligned}
\end{equation}
The convergence of finite volume schemes for scalar conservation laws in a bounded domain has been proven in~\cite{V02} for instance. We are here in a favorable case: we can obtain a discrete version of~\eqref{eq:RPBoundary} by summing ~\eqref{eq:DIE} multiplied by $\Delta t \,  \Delta x \,  \varphi_{j}^{n}$ over $n \geq 0$ and $j \leq -1$. We obtain
\begin{align*}
\Delta t \Delta x \sum_{n \geq 0, j \leq -1} |c_{j}^{n+1}- \kappa| \frac{\varphi_{j}^{n+1}-\varphi_{j}^{n}}{\Delta t} + \Delta x \sum_{j \leq -1} |c_{j}^{0}- \kappa| \varphi_{j}^{0} \\
+ \Delta t \Delta x \sum_{n \geq 0, j \leq -1}  G_{j+1/2}^{n} \frac{\varphi_{j+1}^{n}- \varphi_{j}^{n}}{\Delta x} - \Delta t \sum_{n \geq 0} G_{-1/2}^{n} \varphi_{0}^{n} \leq 0.
\end{align*}
Passing to the limit yields 
\begin{align*} 
  \int_{t>0} \int_{x<h(t)} & |c(t,x)-\kappa| \partial_{t} \varphi(t,x-h(t)) + \Phi_{h'(t)}(c(t,x),\kappa) \partial_{x} \varphi(t,x-h(t)) dx \, dt  \\ &+ \int_{x<h(0)} |c_{-}- \kappa| \varphi(0,x) dx 
   + \int_{t>0} \sign( \kappa-c(t,h(t)))\{ f(c(t,h(t)^{-}))-f(\kappa)\} \varphi(t,0) \geq 0.
\end{align*}
To conclude we check that
$$ \sign( \kappa-h'(t))\{ f(c(t,h(t)^{-}))-f(\kappa)\}  \geq  -\sign( c(t,h(t)-\kappa))\{ f(c(t,h(t)^{-}))-f(\kappa)\}. $$
This relies strongly on the fact that $c$ remains larger than $h'$.
\begin{itemize}
 \item If $h' \leq \kappa \leq c$, the inequality reduces to
 $$\{ f(c(t,h(t)^{-}))-f(\kappa)\}  \geq  -\{ f(c(t,h(t)^{-}))-f(\kappa)\}$$
 which holds because $f$ is increasing on $(0, + \infty)$.
 \item If $h' \leq c \leq \kappa$ or $ \kappa \leq h' \leq c $ the inequality reduces to
  $$\{ f(c(t,h(t)^{-}))-f(\kappa)\}  \geq  \{ f(c(t,h(t)^{-}))-f(\kappa)\}$$
 or
  $$- \{ f(c(t,h(t)^{-}))-f(\kappa)\}  \geq  - \{ f(c(t,h(t)^{-}))-f(\kappa)\},$$
  which are both trivial.
\end{itemize}
\end{proof}

\begin{rem}
Of course, Theorem~\ref{thm:convG1} applies when the initial data is
$$ u^{0}(x)= c_{-} \mathbf{1}_{x<0} + c_{+} \mathbf{1}_{x \geq 0}, $$
with $(c_{-},c_{+}) \in \mathcal{G}_{\lambda}^{2}(v^{0})$. In Appendix~\ref{Append}, we prove the convergence for this specific initial data directly, without using the local in time comparison with the one-way scheme~\eqref{eq:OneWayScheme} in which the velocity of the particle is fixed.

\end{rem}


\appendix
\appendixpage

\section{Detailed analysis when the initial data belongs to $\mathcal{G}_{\lambda}^{2}(v^{0})$} \label{Append}

Our aim in this section is to prove directly that if
\begin{equation} \label{eq:RID}
 u^{0}(x)= u_{-} \mathbf{1}_{x<0} + u_{+} \mathbf{1}_{x \geq 0}  \quad \text{and} \quad h^{0}=0,
\end{equation}
with $(u_{-},u_{+})$ in $\mathcal{G}_{\lambda}^{2}(v^{0})$,  Scheme~\eqref{eq:CoupledScheme} converges toward the exact solution, which in that case is given by
$$ 
\begin{cases}
 h(t)=\frac{u_{-}+u_{+}}{2}t + \left( v^{0} - \frac{u_{-}+u_{+}}{2}\right) \frac{m_{p}}{u_{-}-u_{+}} \left(1 - e^{- \frac{u_{-}-u_{+}}{m_{p}}t} \right), \\
 u(t,x)= u_{-} \mathbf{1}_{x<h(t)} + u_{+} \mathbf{1}_{x \geq h(t)}.
\end{cases}
 $$

In this section only and for technical reasons, we consider a finite volume scheme on a bounded space domain $[-a,a]$, subdivided with $2 M_{c}$ cells and with periodic boundary conditions. The scheme under consideration writes
 \begin{equation} \label{eq:CoupledSchemeP}
 \begin{cases}
 u_{j}^{n+1} &= u_{j}^{n}- \mu (g(u_{j}^{n}, u_{j+1}^{n}, v^{n}) - f(u_{j-1}^{n}, u_{j}^{n}, v^{n}))  \text{ for $j \in \{ -M_{c}+1, \cdots, M_{c} \} \setminus \{0, 1\}$}, \\
 u_{0}^{n+1} &= u_{0}^{n}- \mu (g(u_{0}^{n}, u_{1}^{n}+ \lambda, v^{n})- f(u_{-1}^{n}, u_{0}^{n},v^{n})), \\
 u_{1}^{n+1} &= u_{1}^{n}- \mu (g(u_{1}^{n}, u_{2}^{n},v^{n}) - g(u_{0}^{n}-\lambda, u_{1}^{n}, v^{n})), \\
 u_{-M_{c}}^{n}& = u_{M_{c}}^{n} \quad \text{ and } \quad u_{M_{c}+1}^{n}= u_{-M_{c}+1}^{n}, \\
 v^{n+1} & = v^{n} + \frac{\Delta t}{m_{p}} (g(u_{0}^{n}, u_{1}^{n}+\lambda, v^{n})-g(u_{0}^{n}-\lambda, u_{1}^{n}, v^{n}), \\
 x_{j}^{n+1}& =x_{j}^{n}+ v^{n}\Delta t.
\end{cases}
\end{equation}
We recall that the ratio of the time step $\Delta t$ and the cell size $\Delta x$ is equals to $\mu$. We fixed the final time~$T$. At each time step, four new cells (one of both part of the particle and one of each extremities of the interval because of the periodic boundary conditions) are influenced by Scheme~\eqref{eq:CoupledSchemeP}, in the sense that their values were constant equals to $u_{-}$ or $u_{+}$ before. We take $a$ large enough so that the influence of the particle does not interact with the influence of the boundary condition, and stays in the interval $[-a/3,a/3]$ during the time interval $[0,T]$ (see Figure~\ref{F:Perio} below). This is achieved by taking $a$ larger than $\frac{3T}{\mu}$.
\begin{psfrags}
\psfrag{a}{$\frac{M_{c}}{3}$ cells influenced}
\psfrag{b}{by the boundary}
\psfrag{c}{$\frac{M_{c}}{3}$ cells influenced by the}
\psfrag{d}{particle on each side}
\psfrag{-a}{$-a$}
\psfrag{aa}{$a$}
\psfrag{u-}{$u_{-}$}
\psfrag{u+}{$u_{+}$}
 \begin{figure}[htp]
\centering
 \includegraphics[width=0.9\linewidth]{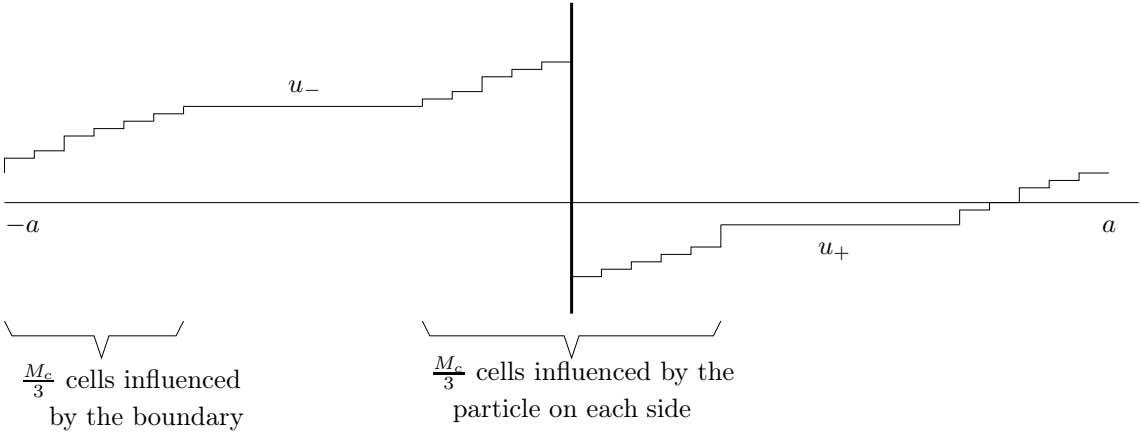}
 \caption{Shape of the numerical solution at time $T$. If $a$ is large enough, the contribution of the particle and of the boundary conditions remain separated.} \label{F:Perio}
\end{figure}
\end{psfrags}
The next proposition states that Scheme~\eqref{eq:CoupledSchemeP} converges toward the solution of the fully coupled problem~\eqref{eq:CauchyPb}.

\begin{prop}
 Suppose that the numerical flux $g$ verifies~(\ref{gcons}-\ref{eq:WBG1}) and ~(\ref{eq:WBTip}-\ref{dissipativity}), that
 \begin{equation} \label{eq:FluxSym}
 \forall A \in \R, \forall B \in \R, \quad g(v-A, v-B,v)= g(v+B,v+A,v),
\end{equation}
and that $\partial_{3} g$ is decreasing with respect to its first two arguments. Under Condition~\eqref{eq:CFL} and for the initial data~\eqref{eq:RID}, Scheme~\eqref{eq:CoupledSchemeP} converges toward the solution of~\eqref{eq:CauchyPb} on
$$ \{ (t,x): t<T \quad \text{and} \quad -a/3+h(t)<x<a/3+h(t)\}. $$
\end{prop}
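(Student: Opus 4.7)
The plan is to combine the extraction of a convergent subsequence $(u_{\Delta t}, h_{\Delta t}) \to (u, h)$ from Propositions~\ref{prop:FluidBound}, \ref{prop:PartBound} and~\ref{thm:Extraction} with a direct identification of the limit that exploits the additional symmetry hypothesis~\eqref{eq:FluxSym} and the Riemann structure of the initial data. Finite speed of propagation under~\eqref{eq:CFL}, together with the choice $a \geq 3T/\mu$ of the periodic domain, ensures that on $[0,T]$ the artificial effect of the periodic boundary does not reach the particle's influence zone, so within the region of interest the scheme behaves as on the whole real line and $u_{j}^{n}$ remains equal to $u_{-}$ (resp. $u_{+}$) outside a band of $O(n)$ cells centred on the particle.

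The core observation is the following symmetric identity. Set $\beta := (u_{-}+u_{+})/2$ and
\[
F(v) := g(u_{-}, u_{+}+\lambda, v) - g(u_{-}-\lambda, u_{+}, v).
\]
Applying~\eqref{eq:FluxSym} at $v=\beta$ with $A = \beta - u_{-}+\lambda$ and $B = \beta - u_{+}$ gives
\[
g(u_{-}-\lambda, u_{+}, \beta) = g(2\beta - u_{+}, 2\beta - u_{-}+\lambda, \beta) = g(u_{-}, u_{+}+\lambda, \beta),
\]
so $F(\beta) = 0$. The hypothesis that $\partial_{3} g$ is decreasing in its first two arguments, together with $u_{-} > u_{-}-\lambda$ and $u_{+}+\lambda > u_{+}$, yields $F'(v) \leq 0$, so $F$ is non-increasing with unique zero at $\beta$. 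Combined with the dissipativity assumption~\eqref{dissipativity} and the arguments of Proposition~\ref{prop:PartBound}, this ensures that the discrete velocity $v^{n}$ remains in $[u_{+}, u_{-}]$ for all $n$, i.e. $(u_{-}, u_{+}) \in \mathcal{G}_{\lambda}^{2}(v^{n})$ is preserved along the iteration.

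The main obstacle is then to show that the limit ODE for $h'$ is \emph{exactly} $m_{p} h'' = (u_{-}-u_{+})(\beta - h')$, and not some other equation of the form $m_{p} v' = G(v)$ with $G(\beta)=0$. The subtlety is that, for general fluxes, a numerical boundary layer may form in $u_{0}^{n}, u_{1}^{n}$, so that the discrete update of $v^{n}$ is not literally a forward Euler approximation of $m_{p} v' = F(v)$; indeed, even assuming $u_{0}^{n}=u_{-}$ and $u_{1}^{n}=u_{+}$, Rusanov would give $F(v)=\lambda(\beta-v)$, with the ``wrong'' slope. My plan to bypass this is to pass to the limit in the discrete equations for the fluid and the particle simultaneously: the $BV$ bound of Proposition~\ref{prop:FluidBound}, the finite propagation, and the $\mathcal{G}_{\lambda}^{2}$-preservation force the limit $u$ to be piecewise constant, equal to $u_{-}$ and $u_{+}$ on the two sides of the trajectory $h$, since any intermediate value would violate the germ conditions enforced in the $\Delta t \to 0$ limit. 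Once this Riemann shape is obtained, the limit momentum balance across the particle cell yields the correct Rankine--Hugoniot-type relation $m_{p} h'' = (u_{-}-u_{+})(\beta - h')$, the $O(\Delta x)$-thick boundary layer contributing only $o(1)$ to the effective drag. Uniqueness of the solution of this linear ODE with initial datum $v^{0}$ identifies $h'$ with the explicit expression in the statement and, together with the Riemann profile of $u$, identifies $(u,h)$ as the announced exact solution; uniqueness of the limit finally upgrades subsequential convergence to convergence of the full sequence.
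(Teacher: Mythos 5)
Your preliminary computations are correct: the identity $F(\beta)=0$ with $\beta=(u_{-}+u_{+})/2$ does follow from~\eqref{eq:FluxSym}, and the monotonicity of $\partial_{3}g$ does give $F'\leq 0$. But the step where you conclude that $v^{n}$ remains in $[u_{+},u_{-}]$ is a genuine gap, and it is precisely the heart of the matter. The discrete velocity update is $v^{n+1}=v^{n}+\frac{\Delta t}{m_{p}}\bigl(g(u_{0}^{n},u_{1}^{n}+\lambda,v^{n})-g(u_{0}^{n}-\lambda,u_{1}^{n},v^{n})\bigr)$, which involves the boundary-layer values $u_{0}^{n},u_{1}^{n}$ and not $u_{-},u_{+}$; you acknowledge this yourself, yet your control of $v^{n}$ rests on properties of $F(v)=g(u_{-},u_{+}+\lambda,v)-g(u_{-}-\lambda,u_{+},v)$, i.e.\ on the frozen values. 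The bound you import from Proposition~\ref{prop:PartBound} only yields $v^{n}\in[\underline{v},\bar{v}]=[u_{-}-\lambda,\,u_{+}+\lambda]$ for the Riemann datum~\eqref{eq:RID}, an interval strictly larger than $[u_{+},u_{-}]$ since $u_{-}-u_{+}<\lambda$; this is not enough to keep $(u_{-},u_{+})$ in $\mathcal{G}_{\lambda}^{2}(v^{n})$ nor to deactivate the boundary condition in~\eqref{eq:BLN}. The paper closes exactly this gap with a different mechanism: it shows that the scheme is order-preserving on the monotone set $\mathcal{S}$ (Lemma~\ref{lemma:Mono}, which is where the decrease of $\partial_{3}g$ in its first two arguments is really used) and $L^{1}$-norm preserving, applies the Crandall--Tartar lemma, and compares the actual discrete solution with the auxiliary one started from the same fluid data but with velocity $\beta$, for which the symmetry~\eqref{eq:FluxSym} propagates exactly and forces $\bar v^{n}\equiv\beta$ for all $n$ (Lemma~\ref{lemma:Sym}). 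The resulting contraction gives $m_{p}|v^{n}-\beta|\leq m_{p}|v^{0}-\beta|\leq m_{p}(u_{-}-u_{+})/2$, hence $u_{+}\leq v^{n}\leq u_{-}$. Nothing in your proposal substitutes for this comparison argument.

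A second, related weakness: your identification of the limit $u$ as the pure Riemann profile invokes ``the germ conditions enforced in the $\Delta t\to0$ limit.'' For the scheme~\eqref{eq:CoupledSchemeP} this is circular: its interface fluxes are consistent only with $\mathcal{G}_{\lambda}^{1}$, which is not a maximal subset of the germ, so the limit traces are not known a priori to satisfy the germ condition --- that is the whole difficulty of Section~\ref{S:ConvLine} and of this appendix. The paper instead identifies the one-sided limits as Bardos--LeRoux--N\'ed\'elec solutions of initial-boundary value problems of the form~\eqref{eq:BLN} with boundary datum $h'(t)$, and uses $u_{+}\leq v^{n}\leq u_{-}$ to show that the boundary condition is inactive, whence $u\equiv u_{\pm}$ on each side. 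Only then does the particle equation follow, via the discrete weak momentum balance of Proposition~\ref{prop:DIP}, whose flux terms avoid the interface; this last point is the one part of your plan (``the boundary layer contributes only $o(1)$ to the effective drag'') that matches the paper, but it becomes usable only after the two gaps above are filled.
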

\begin{proof}
 We prove, as we did in Section~\ref{S:ConvLine}, that $(u_{j}^{n})_{ -M_{c}/3 \leq j \leq 0}$ converges toward the solution of~\eqref{eq:BLN}, with a Neumann boundary condition on the left of the particle. The key point is to prove that $v^{n}$ remains smaller than $c_{-}$ on the whole time interval $[0,T]$, in which case the boundary condition is inactive and we obtain the result. Similarly on the right of the particle, the boundary condition is inactive if $v^{n}$ remains larger than~$c_{+}$.
 
 To prove that $c_{+} \leq v^{n} \leq c_{-}$, we apply the Crandall--Tartar lemma~\cite{CT80} to the application
  $$ 
 \begin{array}{lclc}
 T:	& \mathcal{S} & \longrightarrow & \mathcal{S} \\
 	&((u_{j}^{0})_{-M_{c}+1 \leq j \leq M_{c}}, v^{0}) & \longmapsto & ((u_{j}^{n})_{-M_{c}+1 \leq j \leq M_{c}}, v^{n}).
\end{array}
 $$
 where 
 $$ \mathcal{S}=\{((b_{j})_{j \in \{ -M_{c}+1, \cdots, M_{c} \}},v): \ b_{1} \leq b_{2} \leq \cdots  \leq b_{M_{c}} \leq b_{-M_{c}+1} \leq b_{-M_{c}+2} \leq \cdots \leq  b_{-1} \leq b_{0} \leq b_{1}+ \lambda \}. $$
 
\begin{lemma}[Crandall--Tartar]
 Let $(\Omega, \mu)$ be a measured space, and let $\mathcal{S}$ be a subset of $L^{1}(\Omega)$ stable by sup:
 $$ \forall (u,v) \in \mathcal{S}^{2}, \quad \max(u,v) \in \mathcal{S}. $$
 Consider a function $T: \mathcal{S} \rightarrow \mathcal{S}$  such that
 $$ \forall u \in \mathcal{S}, ||T(u) ||_{L^{1}} = || u ||_{L^{1}}$$
Then, if $T$ is order preserving,
 $$  ||T(u)-T(v)||_{L^{1}} \leq ||u-v||_{L^{1}}  $$
\end{lemma}
 In our case, $\Omega= \R^{2M_{c}} \times \R$ and
 $$ ||(b_{j})_{j \in \{ -M_{c}+1, \cdots, M_{c} \}}, v ||_{L^{1}} = \Delta x \sum_{j=-M_{c}+1}^{M_{c}} |b_{j}| + m |v|. $$
It is straightforward to verify that Scheme~\eqref{eq:CoupledSchemeP} preserves the norm $||\cdot||_{L^{1}}$. The fact that $T$ takes its values in $\mathcal{S}$ is proven exactly as in the proof of Lemma~\ref{lemma:PropScheme}. We prove in Lemma~\ref{lemma:Mono} that $T$ is order preserving.
Applying the Crandall--Tartar lemma to $((u_{j}^{0}), v^{0})$ and $(\bar{u}_{j}^{0}, \bar{v})= ((u_{j}^{0}), \frac{c_{-}+c_{+}}{2})$, we obtain
 $$ \Delta x \sum_{j=-M_{c}+1}^{M_{c}} | \bar{u_{j}}^{n+1}- u_{j}^{n+1}| + m | \bar{v}^{n+1} - v^{n+1} | \leq m \left|v^{0} - \frac{u_{-}+u_{+}}{2} \right|.$$
 The result follows since $\bar{v}^{n+1}= \frac{u_{-}+u_{+}}{2}$ (see Lemma~\ref{lemma:Sym} below).
 
\end{proof}

\begin{lemma} \label{lemma:Sym}
If $g$ verifies~\eqref{eq:FluxSym} and if the initial data is
 $$\begin{cases}
 u_{j}^{0}=  u_{-} & \text{ for } j \leq 0, \\
 u_{j}^{0}=  u_{+} & \text{ for } j \geq 1,\\
 v^{0}= \frac{u_{-}+u_{+}}{2},
\end{cases}
$$
then Scheme~\eqref{eq:CoupledSchemeP} verifies $v^{n}=v^{0}$ for all integer $n$.
\end{lemma}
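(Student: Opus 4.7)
The plan is to prove by induction on $n$ the stronger symmetry property
$$u_j^n + u_{1-j}^n = 2v^0 \quad \text{for all } j \in \{-M_c+1,\ldots,M_c\}, \qquad v^n = v^0,$$
which contains the desired conclusion $v^n = v^0$. The base case $n=0$ is immediate: if $j \leq 0$ then $1-j \geq 1$, so $u_j^0 + u_{1-j}^0 = u_- + u_+ = 2v^0$, and the case $j \geq 1$ is symmetric.

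For the inductive step, the key observation is that Hypothesis~\eqref{eq:FluxSym}, applied at $v = v^0$ with $A = v^0 - a$ and $B = v^0 - b$, rewrites as
$$g(a, b, v^0) = g(2v^0 - b, 2v^0 - a, v^0).$$
Combining this with the induction hypothesis $2v^0 - u_k^n = u_{1-k}^n$ yields the mirror flux identity
$$g(u_{j-1}^n, u_j^n, v^0) = g(u_{1-j}^n, u_{2-j}^n, v^0)$$
at every pair of mirror interfaces away from the particle. The exact same argument, applied to the particle-specific fluxes, produces
$$g(u_0^n, u_1^n + \lambda, v^0) = g(u_0^n - \lambda, u_1^n, v^0),$$
since $2v^0 - (u_1^n + \lambda) = u_0^n - \lambda$ and $2v^0 - u_0^n = u_1^n$. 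Plugging this directly into the velocity update of~\eqref{eq:CoupledSchemeP} immediately gives $v^{n+1} = v^n = v^0$.

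It then remains to check the symmetry of the fluid components at time $n+1$. Computing $u_j^{n+1} + u_{1-j}^{n+1}$ from the scheme, the four flux differences appearing group into two pairs which cancel by the identities above; the verification splits into the generic case $j \notin \{0,1\}$ (standard fluxes on both sides) and the distinguished pair $(j,1-j) = (0,1)$ (particle fluxes on both sides). The periodic boundary causes no difficulty because $j \mapsto 1-j$ is an involution of $\{-M_c+1,\ldots,M_c\}$ sending the wrap-around interface $M_c+1/2$ to $-M_c+1/2$, which coincide modulo periodicity. The only substantive ingredient is the flux symmetry coming from~\eqref{eq:FluxSym}; the rest is bookkeeping, and the main conceptual point is recognizing that the right quantity to track along the induction is the full reflection symmetry of the state about $v^0$, rather than the velocity alone.
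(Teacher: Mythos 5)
Your proposal is correct and follows essentially the same route as the paper: an induction on the strengthened statement that the whole state is reflection-symmetric about $v^{0}$ (the paper phrases it as $u_{-j}^{n}-v^{n}=v^{n}-u_{j+1}^{n}$ for $j\le 0$, which is your $u_{j}^{n}+u_{1-j}^{n}=2v^{0}$), with Hypothesis~\eqref{eq:FluxSym} rewritten as $g(a,b,v^{0})=g(2v^{0}-b,2v^{0}-a,v^{0})$ supplying both the cancellation $g(u_{0}^{n},u_{1}^{n}+\lambda,v^{n})=g(u_{0}^{n}-\lambda,u_{1}^{n},v^{n})$ in the velocity update and the pairwise cancellation of mirror fluxes in the fluid update. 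The only cosmetic difference is that you make the involution $j\mapsto 1-j$ and the periodic wrap-around explicit, whereas the paper writes out the flux computations for $j\le -1$ and $j=0$ in full; both verifications amount to the same bookkeeping.
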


\begin{proof}
 We prove by induction the following stronger result: 
 $$\forall n \in \N, \forall j \leq 0,  \ v^{n}= \frac{u_{-}+u_{+}}{2} \ \ \text{ and } \ \ u_{-j}^{n}-v^{n}= v^{n}-u_{j+1}^{n} . $$
The symmetry of the initial data ensures that this is verified for $n=0$. Suppose that this is verified for some $n \geq 0$. 
Hypothesis~\eqref{eq:FluxSym} on the flux and the induction hypothesis yield
\begin{align*}
 g(u_{0}^{n}, u_{1}^{n}+ \lambda, v^{n}) &= g( v^{n}- (u_{1}^{n}+\lambda-v^{n}), v^{n}- (u_{0}^{n}-v^{n}), v^{n}) \\
 	&= g( u_{0}^{n}-\lambda , u_{1}^{n}, v^{n} ).
\end{align*}
Hence, the velocity remains constant. A similar reasoning can be applied to the fluid velocity. Let us give some details  for $j \leq -1$:
\begin{align*}
 u_{-j}^{n+1} & = u_{-j}^{n}- \mu( g(u_{-j}^{n}, u_{-(j-1)}^{n}, v^{n}) -g( u_{-(j+1)}^{n}, u_{-j}^{n}, v^{n})) \\
 	&=2 v^{n}- u_{j+1}^{n}- \mu \left[ g(v^{n}-(v^{n}-u_{-j}^{n}), (v^{n}-(v^{n}-u_{-(j-1)}^{n}), v^{n}) ) \right. \\
	& \hspace{4cm} \left. - g(v^{n}- (v^{n}-u_{-(j+1)}^{n}), v^{n}-(v^{n}-u_{-j}^{n}), v^{n}) \right] \\
	&=2 v^{n}-  \left[ u_{j+1}^{n}+ \mu [g( 2v^{n}-u_{-(j-1)}^{n},2v^{n}-u_{-j}^{n}, v^{n} ) \right. \\
	& \hspace{4cm} \left. - g( 2v^{n}-u_{-j}^{n}, 2v^{n} -u_{-(j+1)}^{n}, v^{n}) \right] \\
	&= 2 v^{n}-  \left( u_{j+1}^{n} - \mu \left[ g(u_{j+1}^{n}, u_{j+2}^{n}, v^{n}) - g(u_{j}^{n}, u_{j+1}^{n}, v^{n}) \right] \right) \\
	&= 2 v^{n+1}- u_{j+1}^{n+1},
\end{align*}
and for $j=0$:
\begin{align*}
 u_{0}^{n+1} & = u_{0}^{n}- \mu( g(u_{0}^{n}, u_{0}^{n}-\lambda, v^{n}) -g( u_{-1}^{n}, u_{0}^{n}, v^{n}) \\
 	&=2 v^{n}- u_{1}^{n}- \mu \left[ g(v^{n}-(v^{n}-u_{0}^{n}), (v^{n}-(v^{n}-u_{0}^{n}+\lambda), v^{n}) ) \right. \\
	& \hspace{4cm} \left. - g(v^{n}- (v^{n}-u_{-1}^{n}), v^{n}-(v^{n}-u_{0}^{n}), v^{n}) \right] \\
	&=2 v^{n}-  \left[ u_{1}^{n}+ \mu [g( 2v^{n}-u_{0}^{n}+\lambda, 2v^{n}-u_{0}^{n}, v^{n} ) \right. \\
	& \hspace{4cm} \left. - g( 2v^{n}-u_{0}^{n}, 2v^{n} -u_{-1}^{n}, v^{n}) \right] \\
	&= 2 v^{n}-  \left( u_{1}^{n} - \mu \left[ g(u_{1}^{n}, u_{2}^{n}, v^{n}) - g(u_{1}^{n}+\lambda, u_{1}^{n}, v^{n}) \right] \right) \\
	&= 2 v^{n+1}- u_{1}^{n+1}.
\end{align*}
 \end{proof}

\begin{lemma} \label{lemma:Mono} Suppose that  $\partial_{3} g$ is decreasing with respect to its first two arguments, and that~\eqref{gmono},  \eqref{eq:CFL} and~\eqref{dissipativity} hold. Then, if  two initial data are ordered, this order is conserved after one iteration of the scheme. More precisely, if $[(u_{j}^{n})_{j \in \Z}, v^{n}]$ and $[(\bar{u}_{j}^{n})_{j \in \Z}, \bar{v}^{n}]$  are two elements of $\mathcal{S}$ such that
$$ \forall j \in \Z, \ u_{j}^{n} \leq \bar{u}_{j}^{n} \ \ \text{ and } \ \ v^{n} \leq \bar{v}^{n}, $$
then, if $\partial_{3}g$ is decreasing with respect to its first two arguments and if
\begin{equation} \label{eq:newCFL}
  \frac{2 \Delta t}{m_{p}} \max| \partial_{3} g|   < 1,
\end{equation}
then
$$ \forall j \in \Z, \ u_{j}^{n+1} \leq \bar{u}_{j}^{n+1} \ \ \text{ and } \ \ v^{n+1} \leq \bar{v}^{n+1}. $$
\end{lemma}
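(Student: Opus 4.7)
The plan is to show that the one-step iteration map $T:[(u_j^n)_j,v^n]\mapsto[(u_j^{n+1})_j,v^{n+1}]$ is \emph{monotone} on $\mathcal{S}$, meaning each output coordinate is a nondecreasing function of each of the $2M_c+1$ input coordinates. Since $\mathcal{S}$ is defined by a finite family of linear inequalities on $(b_j,v)$ it is convex; given two ordered elements $(u^n,v^n)\leq(\bar u^n,\bar v^n)$ of $\mathcal{S}$, the segment $(1-t)(u^n,v^n)+t(\bar u^n,\bar v^n)$ stays in $\mathcal{S}$ for $t\in[0,1]$, and integrating the coordinates of $T$ along this segment reduces the lemma to the nonnegativity of the partial derivatives of $T$ on $\mathcal{S}$.

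For the fluid update away from the particle, $u_j^{n+1}=u_j^n-\mu(g(u_j^n,u_{j+1}^n,v^n)-g(u_{j-1}^n,u_j^n,v^n))$; by~\eqref{gmono} the derivatives with respect to $u_{j\pm 1}^n$ are nonnegative, and the diagonal term $1-\mu\partial_1 g(u_j^n,u_{j+1}^n,v^n)+\mu\partial_2 g(u_{j-1}^n,u_j^n,v^n)$ is bounded below by $1-2L\mu\geq 0$ by the CFL condition~\eqref{eq:CFL}. The same holds for $j\in\{0,1\}$, where the ``particle'' fluxes $(a,b,v)\mapsto g(a,b+\lambda,v)$ and $(a,b,v)\mapsto g(a-\lambda,b,v)$ inherit the monotonicity of $g$ in their first two arguments; periodic wrap at $j=M_c$ and $j=-M_c+1$ is handled identically.

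The main obstacle is the sign of $\partial u_j^{n+1}/\partial v^n=-\mu(\partial_3 g^{\mathrm{R}}-\partial_3 g^{\mathrm{L}})$, where $g^{\mathrm{L}}$ and $g^{\mathrm{R}}$ denote the left and right fluxes at cell $j$. In every case the definition of $\mathcal{S}$ forces the first two arguments of $g^{\mathrm{R}}$ to be componentwise $\geq$ those of $g^{\mathrm{L}}$: for cells away from the particle this is the chain $u_1^n\leq u_2^n\leq\ldots\leq u_{M_c}^n\leq u_{-M_c+1}^n\leq\ldots\leq u_0^n$ (the periodic wrap being covered by $u_{M_c}^n\leq u_{-M_c+1}^n$); at $j=0$ it reads $(u_{-1}^n,u_0^n)\leq(u_0^n,u_1^n+\lambda)$, the second coordinate using exactly the constraint $u_0^n\leq u_1^n+\lambda$ of $\mathcal{S}$; at $j=1$ it reads $(u_0^n-\lambda,u_1^n)\leq(u_1^n,u_2^n)$, the same constraint in disguise. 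Since $\partial_3 g$ is by hypothesis decreasing in its first two arguments, $\partial_3 g^{\mathrm{R}}\leq\partial_3 g^{\mathrm{L}}$, hence $\partial u_j^{n+1}/\partial v^n\geq 0$. This is the step that is not automatic from standard monotone-flux assumptions, and it is precisely the reason why the invariant set $\mathcal{S}$ is defined the way it is and why the new hypothesis on $\partial_3 g$ is imposed.

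Finally, for the velocity update, writing $g^\pm_\lambda$ as in~\eqref{eq:DragSchemeLambda}, the dissipativity hypothesis~\eqref{dissipativity} states that $g^-_\lambda-g^+_\lambda$ is nondecreasing in its first two arguments, so $\partial v^{n+1}/\partial u_0^n\geq 0$ and $\partial v^{n+1}/\partial u_1^n\geq 0$. The diagonal is
$$\partial v^{n+1}/\partial v^n=1+\frac{\Delta t}{m_p}\bigl(\partial_3 g(u_0^n,u_1^n+\lambda,v^n)-\partial_3 g(u_0^n-\lambda,u_1^n,v^n)\bigr),$$
which is bounded below by $1-\tfrac{2\Delta t}{m_p}\max|\partial_3 g|>0$ by the new CFL~\eqref{eq:newCFL}. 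All coordinates of $T$ thus have nonnegative partial derivatives on $\mathcal{S}$, and the monotonicity of $T$ follows by the interpolation argument of the first paragraph.
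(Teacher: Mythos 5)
Your proof is correct and rests on exactly the same ingredients as the paper's: monotonicity of the flux plus the CFL condition for the $u$-dependence, the decreasing character of $\partial_{3}g$ combined with the ordering of the stencil inside $\mathcal{S}$ for the $v$-dependence of the fluid update, and dissipativity plus the new CFL condition~\eqref{eq:newCFL} for the velocity update. The only (cosmetic) difference is organizational: the paper splits the comparison into the two cases $v^{n}=\bar{v}^{n}$ and $u^{n}=\bar{u}^{n}$ and invokes mixed second derivatives $\partial_{13}g$, $\partial_{23}g$ via the mean value theorem, whereas you interpolate along the segment in the convex set $\mathcal{S}$ and compare $\partial_{3}g$ directly at the two ordered argument pairs, which is marginally lighter on regularity.
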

 \begin{proof}
 The case where $v^{n}$ is equal to $\bar{v}^{n}$ is a straightforward. On the one hand the monotonicity assumption~\eqref{gmono} on $g$ and CFL condition~\eqref{eq:CFL} yield as usual
 $$ u_{j}^{n+1}= H_{\lambda}(u_{j-1}^{n}, u_{j}^{n}, u_{j+1}^{n}, v^{n}) \leq  H_{\lambda}(\bar{u}_{j-1}^{n}, \bar{u}_{j}^{n}, \bar{u}_{j+1}^{n}, v^{n}) = \bar{u}_{j}^{n+1}. $$
 On the other hand,
 $$ \bar{v}^{n+1}-v^{n+1}= \frac{\Delta t}{m_{p}} \big( (g_{\lambda}^{-}-g_{\lambda}^{+})( \bar{u}_{0}^{n}, \bar{u}_{1}^{n}, v^{n})-(g_{\lambda}^{-}-g_{\lambda}^{+})( u_{0}^{n}, u_{1}^{n}, v^{n}) $$
 is nonnegative by Hypothesis~\eqref{dissipativity}.
 
 We now focus on the case where $(u_{j}^{n})_{j \in \Z}$ is equal to $(\bar{u}_{j}^{n})_{j \in \Z}$ and $v^{n} \leq \bar{v}^{n}$. For $j \leq -1$ and $j \geq 2$, a straightforward computation gives that there exists $a_{j+1/2}^{n} \in [u_{j}^{n}, u_{j+1}^{n}]$ and $b_{j-1/2}^{n} \in [u_{j-1}^{n}, u_{j}^{n }]$
 \begin{align*}
 u_{j}^{n+1}- \bar{u}_{j}^{n+1} &= \mu \int_{0}^{1} \partial_{t} g(u_{j}^{n}, u_{j+1}^{n}, v^{n}+t(\bar{v}^{n} -v^{n})) -  \partial_{t} g(u_{j-1}^{n}, u_{j}^{n}, v^{n}+t(\bar{v}^{n} -v^{n})) dt \\
 	&=  \mu \int_{0}^{1}  (\bar{v}^{n}-v^{n}) \left[  \partial_{3} g(u_{j}^{n}, u_{j+1}^{n}, v^{n}+t(\bar{v}^{n} -v^{n})) \right. \\
	& \hspace{3.5cm} \left. -  \partial_{3} g(u_{j-1}^{n}, u_{j}^{n}, v^{n}+t(\bar{v}^{n} -v^{n}))\right]  dt \\
	&=\mu \int_{0}^{1}  (\bar{v}^{n}-v^{n}) \left[ \partial_{23} g(u_{j}^{n}, a_{j+1/2}^{n},v^{n}+t(\bar{v}^{n} -v^{n})) (u_{j+1}^{n}-u_{j}^{n}) \right. \\
	& \hspace{3.5cm} \left.  + \partial_{13} g(b_{j-1/2}^{n},u_{j}^{n}, v^{n}+t(\bar{v}^{n} -v^{n})) (u_{j}^{n}-u_{j-1}^{n}) \right]
 \end{align*}
Moreover, $u_{j-1}^{n} \leq u_{j}^{n} \leq u_{j+1}^{n}$ because we are considering elements of $\mathcal{S}$, thus if $\partial_{3} g$ is decreasing with respect to its first two variables, $u_{j}^{n+1} \leq \bar{u}_{j}^{n+1}$. The same reasoning extends to $j \in \{ 0, 1 \}$ because $u_{0}^{n}-u_{1}^{n} \leq \lambda$.
 Eventually, 
 \begin{align*}
 \bar{v}^{n+1}-v^{n+1}&= \bar{v}^{n}-v^{n} + \frac{\Delta t}{m_{p}}(g(u_{0}^{n}, u_{1}^{n}+ \lambda, \bar{v}^{n})-g(u_{0}^{n}, u_{1}^{n}+ \lambda, v^{n})) \\
 & \hspace{1.5cm}- \frac{\Delta t}{m_{p}}(g(u_{0}^{n}-\lambda, u_{1}^{n}, \bar{v}^{n})-g(u_{0}^{n}-\lambda, u_{1}^{n}, v^{n}) ) \\
 & \geq \left(1-\frac{2 \Delta t}{m_{p}} \max|\partial_{3} g| \right) (\bar{v}^{n}- v^{n}),
\end{align*}
which is nonnegative if~\eqref{eq:newCFL} holds.
\end{proof}

\phantomsection
\addcontentsline{toc}{section}{References} 
\bibliographystyle{alpha}
\bibliography{BiblioAnaNum}
 \end{document}